\documentclass[10pt]{article}
\title{On minimal representation-infinite algebras}
\author{Klaus Bongartz\thanks{klausbongartz@online.de}\\ Universit\"{a}t Wuppertal}
\date{\begin{center}{\itshape Irrtum verl\"{a}sst uns nie, doch ziehet ein h\"{o}her Bed\"{u}rfnis immer den strebenden Geist leise zur Wahrheit hinan}\end{center} 
\hspace*{6cm} {\itshape Xenien}} 

\usepackage{amsthm} 
\usepackage[utf8]{inputenc}

\newtheorem{theorem}{Theorem}
\newtheorem{lemma}{Lemma}

\newtheorem{proposition}{Proposition}

\begin{document}
\maketitle
\begin{abstract}
 We consider finite dimensional basic associative algebras over an algebraically closed field and we classify those   that are not distributive and minimal 
representation-infinite.
 As a consequence the number of
isomorphism  classes of all  minimal representation-infinite
 algebras of any fixed dimension is finite and  there are ${\bf Z}$-forms for these. 
We  show that tame concealed algebras are minimal representation-infinite and that the classification of all minimal representation-infinite algebras would lead to a useless unreadable list.

\end{abstract}

\section{Introduction}

Our algebras  $A$ are basic, associative and of finite dimension over an algebraically closed field $k$. Such  an $A$ is given by its quiver $Q$
and an admissible ideal $I$. The $A$-modules are  left-modules of finite dimension and we think of these often as representations of $Q$ satisfying the relations imposed by $I$. 
The category of these 
modules is denoted by $mod\,A$.
An algebra $A$ is  representation-finite if it has only finitely many isomorphism classes of indecomposable modules  and minimal representation-infinite if it is not representation-finite, 
but any proper quotient  is. Finally $A$ is distributive if its lattice of two-sided ideals is distributive.

In 1957 Jans showed in \cite{Jans} that a non-distributive algebra is strongly unbounded i.e. that there exist infinitely many $d$ such that there are infinitely many isomorphism classes of indecomposables of dimension $d$.
 Furthermore he mentions two 
conjectures of Brauer and Thrall: The first says that $A$ is representation-finite if there is a bound on the dimensions of indecomposables and the second  says that otherwise 
$A$ is  strongly unbounded. 

The first conjecture was 1968 positively answered by Roiter in \cite{Roiter} using brilliant elementary arguments and for the  generalization to artinian rings 1974 in \cite{Auslander} Auslander
  considered  almost split sequences in disguise.
 The proof \cite{Bautista} of the second conjecture by Bautista in 1985 required
some of the new concepts of 
representation theory introduced after 1968 and also an intensive study of representation-finite and distributive minimal
 representation-infinite algebras. This was done between 1970 and 1985  by several people who turned their attention afterwards to other directions.

However, some natural questions remained unanswered e.g.: Can there be gaps in the lengths
 of the indecomposables? Is there a finite dimensional representation-infinite algebra which is smallest with respect to representation embeddings? Are there 
only finitely many isomorphism 
classes of minimal representation-infinite algebras in each dimension?

 I answered the first two questions in  two former publications \cite{Bind,Bemb} and here 
I  answer the third. To this end  we define five families of algebras depending on parameters by a picture of 
their quivers and by 
giving afterwards the relations and the possible values of the parameters.

\setlength{\unitlength}{0.7cm}
\begin{picture}(15,7)

\put(0,5){A(p,q)}
\put(1,4){\circle*{0.1}}
\put(0,3){\circle*{0.1}}
\put(2,3){\circle*{0.1}}
\put(2,1){\circle*{0.1}}
\put(0,1){\circle*{0.1}}
\put(1,0){\circle*{0.1}}
\put(1,4){\vector(-1,-1){1}}
\put(1,4){\vector(1,-1){1}}
\put(0,1){\vector(1,-1){1}}
\put(2,1){\vector(-1,-1){1}}
\multiput(0,3)(0,-0.2){10}{\circle*{0.01}}
\multiput(2,3)(0,-0.2){10}{\circle*{0.01}}

\put(0.3,2.9){$a_{1}$}
\put(1.5,2.9){$b_{1}$}
\put(0.3,1){$a_{p}$}
\put(1.5,1){$b_{q}$}

\put(5.5,5){B(p,q)}
\put(6,4){\circle*{0.1}}
\put(5,3){\circle*{0.1}}
\put(7,2){\circle*{0.1}}
\put(6,3){\circle*{0.1}}
\put(6,1){\circle*{0.1}}
\put(5,1){\circle*{0.1}}
\put(6,0){\circle*{0.1}}
\put(6,4){\vector(-1,-1){1}}
\put(6,4){\vector(1,-2){1}}
\put(6,4){\vector(0,-1){1}}
\put(5,1){\vector(1,-1){1}}
\put(6,1){\vector(0,-1){1}}
\put(7,2){\vector(-1,-2){1}}
\multiput(5,3)(0,-0.2){10}{\circle*{0.01}}
\multiput(6,3)(0,-0.2){10}{\circle*{0.01}}
\put(7.2,2){c}
\put(4.4,2.9){$a_{1}$}
\put(5.5,2.9){$b_{1}$}
\put(4.4,1){$a_{p}$}
\put(5.5,1){$b_{q}$}

\put(12,5){C(p)}
\put(14,3){\circle*{0.1}}
\put(14,1){\circle*{0.1}}
\multiput(14,3)(0,-0.2){10}{\circle*{0.01}}
\put(12.5,2.5){$\rho_{p}$}
\put(12.5,1.5){$\rho_{1}$}
\put(11,2.5){$\alpha$}
\put(11,1.5){$\beta$}
\put(12,1){\circle*{0.1}}
\put(12,3){\circle*{0.1}}
\put(13,4){\circle*{0.1}}
\put(13,0){\circle*{0.1}}
\put(12,2){\circle*{0.1}}

\put(11,4){\circle*{0.1}}
\put(11,0){\circle*{0.1}}

\put(14,3){\vector(-1,1){1}}\put(11,4){\vector(1,-2){1}}\put(12,2){\vector(-1,-2){1}}\put(13,4){\vector(-1,-1){1}}\put(12,3){\vector(0,-1){1}}\put(12,2){\vector(0,-1){1}}\put(12,1){\vector(1,-1){1}}

\put(13,0){\vector(1,1){1}}
\end{picture}

\begin{picture}(10,7)

\put(2,5){D(p,q)}
\put(5,3){\circle*{0.1}}
\put(5,1){\circle*{0.1}}
\multiput(5,3)(0,-0.2){10}{\circle*{0.01}}

\put(3,1){\circle*{0.1}}
\put(3,3){\circle*{0.1}}
\put(4,4){\circle*{0.1}}
\put(4,0){\circle*{0.1}}
\put(3,2){\circle*{0.1}}

\put(1,4){\circle*{0.1}}
\put(1,0){\circle*{0.1}}
\put(0,3){\circle*{0.1}}
\put(0,1){\circle*{0.1}}
\multiput(0,3)(0,-0.2){10}{\circle*{0.01}}

\put(3.2,2.5){$\rho_{p}$}
\put(3.2,1.5){$\rho_{1}$}
\put(1.9,2.5){$\alpha$}
\put(1.9,1.5){$\beta$}
\put(0,3.8){$\alpha_{1}$}
\put(-0.5,0.2){$\alpha_{q+1}$}

\put(5,3){\vector(-1,1){1}}
\put(1,4){\vector(1,-1){2}}
\put(3,2){\vector(-1,-1){2}}
\put(1,4){\vector(-1,-1){1}}
\put(0,1){\vector(1,-1){1}}
\put(4,4){\vector(-1,-1){1}}
\put(3,3){\vector(0,-1){1}}
\put(3,2){\vector(0,-1){1}}
\put(3,1){\vector(1,-1){1}}
\put(0,1){\vector(1,-1){1}}
\put(4,0){\vector(1,1){1}}

\put(11,5){E(p,q,r)}
\put(9,2){\circle*{0.1}}
\put(10,2){\circle*{0.1}}
\put(14,2){\circle*{0.1}}
\put(13,2){\circle*{0.1}}
\put(8,0){\circle*{0.1}}
\put(8,4){\circle*{0.1}}
\put(7,1){\circle*{0.1}}
\put(7,3){\circle*{0.1}}
\put(9,1){\circle*{0.1}}
$\put(9,3){\circle*{0.1}}$
\put(14,1){\circle*{0.1}}
\put(14,3){\circle*{0.1}}
\put(15,0){\circle*{0.1}}
\put(15,4){\circle*{0.1}}
\put(16,1){\circle*{0.1}}
\put(16,3){\circle*{0.1}}
\put(7,3){\vector(1,1){1}}

\put(8,4){\vector(1,-1){1}}
\put(9,3){\vector(0,-1){1}}
\put(9,2){\vector(0,-1){1}}
\put(9,1){\vector(-1,-1){1}}
\put(8,0){\vector(-1,1){1}}

\put(14.2,2.5){$\gamma_{1}$}
\put(14.2,1.5){$\gamma_{p}$}
\put(8.2,1.5){$\alpha_{1}$}
\put(8.2,2.5){$\alpha_{q}$}
\put(9.2,2.5){$\beta_{1}$}
\put(13.2,2.5){$\beta_{r}$}

\put(9,2){\vector(1,0){1}}
\put(10,2){\vector(1,0){1}}
\put(13,2){\vector(1,0){1}}

\put(14,3){\vector(1,1){1}}
\put(15,4){\vector(1,-1){1}}
\put(16,1){\vector(-1,-1){1}}
\multiput(11,2)(0.2,0){10}{\circle*{0.01}}
\put(14,2){\vector(0,1){1}}
\put(14,1){\vector(0,1){1}}
\put(15,0){\vector(-1,1){1}}
\multiput(7,1)(0,0.2){10}{\circle*{0.01}}
\multiput(16,1)(0,0.2){10}{\circle*{0.01}}

\end{picture}

\vspace{1cm}

For the family $A(p,q)$ there is no relation and one has $p,q \geq 0$. In the family $B(p,q)$ only the possibilities $p\geq q=1$ and 
$4 \geq p \geq q=2$ are allowed and the sum of all three paths between the source and the sink is a relation. Thus so far we have just the tame canonical algebras. In the remaining cases all parameters $p,q,r \geq 1$ are allowed.
 $C(p)$  has one zero- relation $\rho_{1}\rho_{p}$ which also holds 
for $D(p,q)$ where in addition the two paths between the source and the sink give a commutativity relation.  Finally 
the relations $\alpha_{1}\alpha_{q}$,$\gamma_{1}\gamma_{p}$ and $\gamma_{1}\beta_{r}\ldots \beta_{1}\alpha_{q}$ define $E(p,q,r )$.

Recall that for each algebra $A$ with a source $a$ and a sink $z$ one obtains another 'glued' algebra by identifying $a$ and $z$
 to one point $x$ and by adding in the new quiver  all paths of length 2 with $x$ as an interior point to the relations.

Our first result says: 

\begin{theorem} 
 An algebra  over an algebraically closed field is basic minimal representation-infinite and not distributive if and only if it is isomorphic to an algebra listed above or to its glued version.
\end{theorem}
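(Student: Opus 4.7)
The strategy is to prove the two implications separately; the "if" direction reduces to a concrete verification for each family, while the "only if" direction is the genuine classification.

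For the "if" direction, I would check for each of $A(p,q), B(p,q), C(p), D(p,q), E(p,q,r)$ and each glued version three things: non-distributivity, representation-infiniteness, and minimality. Non-distributivity is witnessed by two linearly independent paths (or path-combinations) between the source and the sink in each picture, producing an explicit triple of ideals that fails the distributive law. Representation-infiniteness follows from recognising $A(p,q)$ as the hereditary algebra of a Euclidean $\tilde A_{p+q+3}$ quiver with one source and one sink, $B(p,q)$ as a tame canonical algebra (as the author already remarks), and $C(p), D(p,q), E(p,q,r)$ as algebras tilted from, or closely related to, these tame bases; alternatively one writes down a one-parameter family of pairwise non-isomorphic indecomposables. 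Minimality is checked by exhausting the finitely many "elementary" quotients: killing an arrow collapses the underlying graph to a Dynkin type, and imposing any further admissible relation reduces the algebra to one on the existing lists of representation-finite bound quivers from the 1970s–80s. The glued versions behave like the unglued ones because the extra length-two zero-relations through the glued vertex are harmless for both non-distributivity and minimality.

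For the "only if" direction, let $A$ be basic, minimal representation-infinite and non-distributive. Invoking Jans' characterisation, non-distributivity gives two linearly independent paths $w_1, w_2$ in the bound quiver from some vertex $x$ to some vertex $y$. Choose $x, y$ and $w_1, w_2$ so that the subquiver $Q'$ they span is minimal. The first sub-step is to argue that the entire quiver of $A$ collapses onto $Q'$ (up to possibly identifying $x$ with $y$): any further arrow or vertex of $A$ outside $Q'$ can be cut away, producing a proper quotient that still contains $Q'$ and its two parallel paths, hence is still non-distributive and—by Jans again—still representation-infinite, contradicting minimality. The second sub-step is to classify the minimal admissible relation-sets on $Q'$ compatible with representation-infiniteness while every proper quotient remains representation-finite; a case-split on how $w_1$ and $w_2$ interact (disjoint interiors; shared interior through a third canonical path; shared interior through an extra arrow-pattern decorating source or sink; glued endpoints $x=y$) should produce exactly the five listed families and their glued versions.

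The main obstacle is this second sub-step of the "only if" direction: the determination of admissible relations on the core subquiver $Q'$. One has to show simultaneously that the prescribed relations produce a representation-infinite algebra (they cannot be strengthened) and that every proper admissible extension of them is representation-finite (they cannot be weakened). Carrying out this bookkeeping across all sub-cases—particularly for $E(p,q,r)$ with three parameters and the interaction between the two zero-relations and the $\gamma_1\beta_r\ldots\beta_1\alpha_q$ relation—requires repeated appeal to the existing classification of representation-finite bound quivers, combined with covering-theoretic arguments. The glued case then needs the additional observation that identifying $x$ with $y$ and imposing the length-two zero-relations at the new vertex does not destroy the non-distributive structure carried by $Q'$.
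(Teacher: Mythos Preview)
Your ``only if'' strategy has a genuine gap in the first sub-step. The claim that any vertex outside the subquiver $Q'$ spanned by $w_1,w_2$ can be cut away while keeping the quotient non-distributive fails already for the family $D(p,q)$. There the socle $e_zAe_a$ has basis $\overline{\beta\alpha}$ and $\overline{\beta\rho\alpha}$, so the minimal $Q'$ consists of $\alpha$, $\beta$ and the cycle $\rho$; the side-branch $\alpha_{q+1}\cdots\alpha_1$ lies entirely outside $Q'$. But the commutativity relation forces $\overline{\alpha_{q+1}\cdots\alpha_1}=\overline{\beta\alpha}$, so killing any vertex on the side-branch forces $\overline{\beta\alpha}=0$ in the quotient, which then has $e_zAe_a$ one-dimensional and is distributive. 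Thus minimality does \emph{not} force the quiver to equal $Q'$, and your reduction collapses. More generally, Jans gives you only that some bimodule layer $\mathrm{rad}^i e_zAe_a/\mathrm{rad}^{i+1}e_zAe_a$ is at least two-dimensional, not two parallel paths whose support controls the whole quiver.

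The paper's route is structurally different and avoids this trap. It first proves a trichotomy (Proposition~2): a minimal non-distributive algebra has a unique critical pair $(a,z)$, its two-sided socle is exactly the two-dimensional space $S(a,z)$, and the local algebra $eAe$ at $e=e_a+e_z$ is forced to be one of three explicit shapes (types~1,~2,~3). Type~1 and type~2 are then shown to be exchanged by glueing/separating (Proposition~3), so one only has to classify types~2 and~3. That classification is not done by cutting down to two paths; instead one repeatedly looks at the full subcategory on three, four or five well-chosen points, passes to a quotient defined by zero-relations, and finds in its Galois cover a convex $\tilde A_n$, $\tilde D_n$ or $\tilde E_6$ subquiver contradicting minimality. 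A chain of such lemmas pins down the thick points, proves uniqueness and disjointness of the long paths, and eventually forces the quiver and relations to be exactly one of the five families. Your sketch of the second sub-step (``classify admissible relation-sets on $Q'$'') is therefore where essentially all the work lies, and it cannot be carried out by the bookkeeping you describe; you need the covering machinery and the small-subcategory analysis that the paper develops over Sections~3 and~4.
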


This has an interesting consequence whose analogue for representation-finite algebras is not true because non-standard algebras exist.

\begin{theorem} Let $d$ be a natural number. There is a finite list of ${\bf Z}$-algebras which are free of rank $d$ as  ${\bf Z}$-modules such that for each 
algebraically closed field $k$ the algebras
$A\otimes_{{\bf Z}} k$ form a list of basic minimal representation-infinite algebras of dimension $d$.
\end{theorem}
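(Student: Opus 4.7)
The plan is to split the classification according to distributivity and treat the two subcases separately. For non-distributive algebras I would invoke Theorem~1, which provides an explicit parametric description of the entire class. For distributive ones I would appeal to the classification work for distributive minimal representation-infinite algebras carried out between 1970 and 1985 and alluded to in the introduction.

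In the non-distributive case every algebra listed in Theorem~1 is presented by a quiver together with relations that are either monomial zero-relations, commutativity of two parallel paths, or canonical sums of parallel paths with coefficients $\pm 1$. For each admissible parameter tuple I would therefore set $A_{\mathbf{Z}} := \mathbf{Z} Q / I_{\mathbf{Z}}$, where $I_{\mathbf{Z}}$ is generated inside the integer path algebra $\mathbf{Z} Q$ by the very same relations. A standard normal-form argument, based on the fact that each generator of $I_{\mathbf{Z}}$ is homogeneous with leading coefficient $\pm 1$, shows that $A_{\mathbf{Z}}$ is free as a $\mathbf{Z}$-module with basis the surviving paths, and base change to any algebraically closed $k$ returns the prescribed algebra. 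The glued versions fit in as well, since gluing only adds monomial length-two relations. Finally, within each family the dimension of $A_{\mathbf{Z}}$ is a strictly monotone function of the parameters $p,q,r$, so only finitely many tuples contribute to any given~$d$.

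For the distributive case I would invoke the classification already available in the literature, which exhibits every distributive minimal representation-infinite algebra as a bound quiver with relations of the same three types and with scalars that, after a diagonal change of basis at the vertices, can be normalised to lie in $\{-1,0,1\}$. The same $\mathbf{Z} Q / I_{\mathbf{Z}}$ recipe then supplies the required $\mathbf{Z}$-forms, and the total dimension again grows with the combinatorial parameters of the presentation, so only finitely many members of the list realise a prescribed dimension. Combining the two cases produces the finite list asserted in the theorem.

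The main obstacle is bookkeeping rather than conceptual: one has to verify that the published presentations in the distributive case can indeed be normalised so that all structure constants lie in $\{-1,0,1\}$ and that the resulting ideal over $\mathbf{Z}$ is admissible and independent of the residue characteristic, so that reduction preserves both the dimension and the property of being minimal representation-infinite. In the non-distributive case Theorem~1 settles all of these points by direct inspection of the five families and their glued variants.
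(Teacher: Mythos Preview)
Your treatment of the non-distributive case is essentially the same as the paper's: Theorem~1 gives explicit presentations with relations having coefficients in $\{-1,0,1\}$, the quotients $\mathbf{Z}Q/I_{\mathbf{Z}}$ are free, and only finitely many parameter values yield a given rank. This part is fine.

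The distributive case, however, contains a genuine gap. You write that you would ``invoke the classification already available in the literature,'' but no such classification exists. The paper says so explicitly in Section~7: ``a complete classification remains out of reach,'' and the example following figure~6 illustrates why (even a single critical $\tilde{D}_5$-quiver admits $53$ gluings, only $9$ of which are minimal representation-infinite, and among these there are wild ones). The passage in the introduction about work ``done between 1970 and 1985'' refers to structural results, not a classification list. So there is no list of bound quivers with $\{-1,0,1\}$-coefficients to appeal to, and no direct way to check the bookkeeping you describe.

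What the paper actually does is bypass classification entirely. It uses the deep result (Theorem~8 here, proved in \cite{Gaps}) that every basic distributive minimal representation-infinite algebra $A$ is isomorphic to the linearization $kP$ of its ray category $P$. The ray category is a purely combinatorial object, independent of the ground field, and one takes $\mathbf{Z}P$ as the $\mathbf{Z}$-form. Three further ingredients are then needed: the finiteness criterion (Theorem~7) guarantees that the property ``$kP$ is minimal representation-infinite of dimension $d$'' does not depend on the choice of algebraically closed field $k$; there are only finitely many ray categories $P$ with $\dim kP = d$; and $kP \simeq kP'$ forces $P$ and $P'$ to be equivalent, so the list has no redundancy. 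None of these steps is elementary, and the first in particular (that $A \simeq kP$ even in the representation-infinite case) is precisely what fails for representation-finite algebras in characteristic~$2$, which is why the analogue of Theorem~2 is false there.
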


In fact both theorems and their  proofs remain valid for $k$-split algebras over any field.
 
 The proof of theorem 1 given on 15 pages is the heart of the article. There are similarities to the proof of the structure theorems 
for non-deep contours in the central  article \cite{BGRS} of Bautista, Gabriel, Roiter and Salmer\'{o}n 
 about multiplicative bases. In section 2 we subdivide the problem into three different types of algebras, where the first two are related by the glueing procedure 
described above.  
The algebras of type 2 are analyzed in section three and the slightly more delicate algebras of type 3  in section 4. 

In section 5   theorem 2 is derived from theorem 1 and my former results on coverings in \cite{Bind}. 

In  the next section it is shown that tame concealed algebras are  minimal representation-infinite.  Somewhat surprisingly this is nowhere mentioned in the recent literature. The mathematical and historical relations between the characterization of tame concealed algebras  by Happel and Vossieck and my results on critical simply connected algebras
are clarified.

At the end we prove   that all basic distributive minimal representation-infinite algebras
 can be obtained by a glueing process from  a critical line or a critical simply connected algebra. However, in the second case  a complete classification remains out of reach.

\section{The trichotomy }
\subsection{Notations, conventions and a reminder on distributive algebras}

Throughout this article $A$ is a basic associative algebra of finite dimension over a field $k$,  $N$ denotes the radical of $A$ and $S$ the socle of $A$ as a bimodule.
We assume that $A/N$ is a product of copies of $k$ which holds always if $A$ is basic and $k$ is algebraically closed.
By a fundamental  observation of Gabriel  there is then a
 quiver $Q$ and 
a surjective algebra homomorphism $\pi$ from the path-algebra $kQ$ to $A$
 whose kernel is contained in the ideal generated  by all paths in $Q$ of length $2$. We fix such a presentation and we write often $\overline{v}$ instead of $\pi(v)$. Thus we get in $A$ a 
decomposition of $1$ as a sum of pairwise orthogonal primitive idempotents 
 $1=\ \sum_{x \in Q_{0}} e_{x}$ where $e_{x}$ is the image of the path of length $0$ through the point $x$. 

We denote by $\mathcal{I}$ the lattice of two-sided ideals of $A$, by $\mathcal{I'}$ the sublattice of the ideals contained in $N$ and by $\mathcal{B}(x,y)$ 
the lattice of $e_{x}Ae_{x}-e_{y}Ae_{y}$-subbimodules of $e_{x}Ae_{y}$. For such a subbimodule $J$ we denote by $rad\,J$ the 
radical as a bimodule and the higher radicals $rad\,^{i}J$ are defined by induction. We have $rad \,e_{x}Ae_{x}=e_{x}Ne_{x}$ for any $x$ and $rad\,J= e_{x}(NJ+JN)e_{y}$ for any subbimodule.
The algebra is called distributive provided $\mathcal{I}$ is a distributive lattice.

We refine a little bit 
the important observations of Jans \cite{Jans} and Kupisch \cite{Kupisch1} on distributive algebras.

\begin{proposition}
 
Keeping the above assumptions and notations we have:
\begin{enumerate}
\item If $J$ is a subbimodule of $e_{x}Ae_{y}$ and $\langle J \rangle $ the two-sided ideal generated by $J$ then we have $\langle J \rangle = NJ+JN+J$ and 
$e_{x}\langle J \rangle e_{y} = \langle J \rangle \cap e_{x}Ae_{y} = J$.
 \item The map $I \mapsto e_{x}Ie_{y}$ is a surjective lattice homomorphism from  $\mathcal{I}$  to  $\mathcal{B}(x,y)$ for all $x,y$.

\item For two points $x,y$  the following are equivalent: 
\begin{enumerate}
\item  $\mathcal{B}(x,y)$ is distributive. 
\item $dim \,(rad\,^{i}e_{x}Ae_{y}/rad\,^{i+1}e_{x}Ae_{y}) \, \leq 1$ for all $i$.
\item $e_{x}Ae_{y}$ is a uniserial bimodule i.e. it has a unique chain of subbimodules.
 
\end{enumerate}

\item Equivalent are:
\begin{enumerate}
 \item $\mathcal{I}$ is  distributive.
\item $\mathcal{I'}$ is distributive.
\item All the lattices $\mathcal{B}(x,y)$  are distributive.
\end{enumerate}

\item The ring $e_{x}Ae_{x}$ is uniserial if and only if its radical is $0$ or generated by one element $\alpha_{x}$. 
\item Let $x,y$ be two points such that $e_{x}Ae_{x}$ and $e_{y}Ae_{y}$ are both uniserial.
Then $e_{x}Ae_{y}$ is uniserial as a bimodule if and only if for $i=0$ and $i=1$ we have $dim \,(rad\,^{i}e_{x}Ae_{y}/rad\,^{i+1}e_{x}Ae_{y}) \, \leq 1$. In that case $e_{x}Ae_{y}$ is uniserial as a left $e_{x}Ae_{x}$- or a right $e_{y}Ae_{y}$-module.
\end{enumerate}
\end{proposition}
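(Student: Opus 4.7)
My plan is to prove the six parts in order, with the substantial work concentrated in (i) and (vi). The unifying tool throughout is the vector-space splitting $A = \bigoplus_{z \in Q_{0}} ke_{z} \,\oplus\, N$, which exists because the hypothesis $A/N \cong \prod k$ gives a semisimple section of the projection $A \to A/N$.

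For (i), the essential point is $e_{x}\langle J \rangle e_{y} = J$: starting from $\langle J \rangle = AJA$, I multiply a generator $ajb$ by $e_{x}$ on the left and $e_{y}$ on the right and use the splitting to reduce to $(e_{x}ae_{x})\,j\,(e_{y}be_{y}) \in (e_{x}Ae_{x})\,J\,(e_{y}Ae_{y}) = J$, the last equality by the bimodule assumption on $J$. The identity $\langle J \rangle \cap e_{x}Ae_{y} = e_{x}\langle J \rangle e_{y}$ is automatic for any ideal, and the formula $\langle J \rangle = NJ + JN + J$ follows by decomposing each $ajb$ according to whether $a,b$ lie in $\bigoplus ke_{z}$ or in $N$ and iterating the nilpotency of $N$ to absorb the cross term. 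Part (ii) is then formal: $I \mapsto e_{x}Ie_{y}$ preserves sums trivially, preserves intersections by $e_{x}(I\cap I')e_{y} = e_{x}Ie_{y}\cap e_{x}I'e_{y}$, and is surjective by taking $I := \langle J \rangle$.

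Parts (iii)--(v) are lattice- and module-theoretic routine. In (iii), (c)$\Rightarrow$(a) is immediate (a chain is distributive); (c)$\iff$(b) follows because the simple $e_{x}Ae_{x}$-$e_{y}Ae_{y}$-bimodule is one-dimensional (both endomorphism rings have $k$ as residue field), so the semisimple quotient $rad^{i}/rad^{i+1}$ is uniserial iff it is at most one-dimensional; and the contrapositive of (a)$\Rightarrow$(b) uses three cyclic subbimodules generated by $u$, $v$ and $u+v$ for classes $\bar{u},\bar{v}$ that are linearly independent in $rad^{i}/rad^{i+1}$ to exhibit a non-distributive diamond. In (iv), (a)$\Rightarrow$(c) uses the injective lattice homomorphism $\mathcal{I} \hookrightarrow \prod_{x,y}\mathcal{B}(x,y)$, $I \mapsto (e_{x}Ie_{y})$, whose injectivity comes from $I = \bigoplus_{x,y} e_{x}Ie_{y}$; (b)$\Rightarrow$(c) is analogous, with the observation that $\mathcal{B}(x,x)$ is obtained from the sublattice of subbimodules of $e_{x}Ne_{x}$ by adjoining a top element, and adjoining a top preserves distributivity. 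Part (v) is the standard local-ring fact: a generator of the radical modulo its square lifts by Nakayama to a generator of the radical.

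The real work is (vi). Assume $R_{1} := e_{x}Ae_{x}$ and $R_{2} := e_{y}Ae_{y}$ are uniserial with radical generators $\alpha_{x}, \alpha_{y}$, and $\dim(rad^{i}/rad^{i+1}) \leq 1$ for $i = 0,1$; by (iii) I must promote this bound to all $i$. Nakayama disposes of the degenerate cases $e_{x}Ae_{y} = 0$ and $e_{x}Ae_{y} = ku$; otherwise fix $u$ generating $e_{x}Ae_{y}$ as a bimodule, so every element is a sum of $\alpha_{x}^{a}u\alpha_{y}^{b}$. The hypothesis at $i=1$ forces a relation $\alpha_{x}u = \lambda u\alpha_{y} + z$ with $z \in rad^{2}$ (swapping the roles of left and right if necessary so that $\lambda$ is nonzero), and iterating gives $\alpha_{x}^{a}u\alpha_{y}^{b} \equiv \lambda^{a}u\alpha_{y}^{a+b}$ modulo $rad^{a+b+1}$. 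A descending induction on the (terminating) radical filtration of $e_{x}Ae_{y}$ then places every $\alpha_{x}^{a}u\alpha_{y}^{b}$ inside $uR_{2}$, so $e_{x}Ae_{y} = uR_{2}$ is cyclic as a right $R_{2}$-module and therefore uniserial as a right module, hence a fortiori as a bimodule. The main obstacle I anticipate is the algebraic bookkeeping in this last step: organising the descending induction and handling the case where $\alpha_{x}u$ or $u\alpha_{y}$ sits inside $rad^{2}$ without loss of generality is where careless notation obscures what should be a clean cyclicity argument.
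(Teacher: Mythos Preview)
Your approach matches the paper's in every part; in particular your argument for (vi) --- pick a bimodule generator $u$, use the $i=1$ hypothesis to write $\alpha_xu\equiv\lambda u\alpha_y\pmod{rad^2}$, iterate to $\alpha_x^au\alpha_y^b\equiv\lambda^au\alpha_y^{a+b}\pmod{rad^{a+b+1}}$, then run a descending induction on the radical filtration to conclude one-sided cyclicity --- is exactly what the paper does.

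One genuine slip: the identity $\langle J\rangle=NJ+JN+J$ in (i) is actually \emph{false}, so ``iterating the nilpotency of $N$ to absorb the cross term'' cannot work. Take $A=kQ$ for the linear $A_4$ quiver with arrows $\alpha:1\to2$, $\beta:2\to3$, $\gamma:3\to4$ and set $J=k\overline\beta\subseteq e_3Ae_2$: then $\overline{\gamma\beta\alpha}\in\langle J\rangle$, but $NJ+JN+J=k\overline{\gamma\beta}+k\overline{\beta\alpha}+k\overline\beta$ misses it. Your own decomposition already yields the correct formula $\langle J\rangle=J+NJ+JN+NJN$, and the term $NJN$ does not collapse into the others. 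This is harmless for the remainder of the proposition: only the equation $e_x\langle J\rangle e_y=J$ is ever used afterwards (for surjectivity in (ii) and for lifting a non-distributive configuration to $\mathcal I'$ in (iv)), and your argument for that equation is correct. The paper itself just declares (i) ``immediately clear'' and never invokes the offending formula. Two smaller points: in (iv) the injective embedding $\mathcal I\hookrightarrow\prod_{x,y}\mathcal B(x,y)$ proves (c)$\Rightarrow$(a), not (a)$\Rightarrow$(c) as you label it (the latter comes from (ii)); and in (vi) you need not arrange $\lambda\neq 0$, since the inductive congruence and the descending induction go through unchanged when $\lambda=0$ --- the paper makes no such restriction on its scalar $\xi$.
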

 \begin{proof}
  Statement i) is immediately clear and also that the map $I \mapsto e_{x}Ie_{y}$ preserves intersections, sums and inclusions. The surjectivity follows from the last equation in i).

Suppose now that  the vector space $V=(rad\,^{i}e_{x}Ae_{y}/rad\,^{i+1}e_{x}Ae_{y}) $ has dimension $\geq 2$ for some $i$. Then $rad\,^{i}e_{x}Ae_{y}$ lies in $N$. Namely for $x\neq y$ we 
have $e_{x}Ae_{y} \subseteq N$ and for $x=y$ we have $i\geq 1$. In $V$ there is a plane containing three different lines violating the law of distributivity. Their preimages 
$L_{1},L_{2},L_{3}$ under the canonical projection are subbimodules also violating distributivity and so $\mathcal{B}(x,y)$ is not distributive. Similarily one gets that $\mathcal{I'}$ is not 
distributive by looking at the two-sided ideals generated by the $L_{i}$ and using part i).

We have just seen that the distributivity of $\mathcal{B}(x,y)$ implies for all $i$ that $ dim (rad\,^{i}e_{x}Ae_{y}/rad\,^{i+1}e_{x}Ae_{y}) \leq 1$. 
It follows easily that $\mathcal{B}(x,y)$ is uniserial whence distributive. So part iii) is true.

If $\mathcal{I}$ is distributive so is its sublattice $\mathcal{I'}$. From this we obtain by the argument from above that $ dim (rad\,^{i}e_{x}Ae_{y}/rad\,^{i+1}e_{x}Ae_{y}) \leq 1$ for 
all $i$ and all $x,y$. Thus all $\mathcal{B}(x,y)$ are distributive by part iii). Using the relation $I= \oplus _{x,y}e_{x}Ie_{y}$ valid for any two-sided ideal one gets that $\mathcal{I}$ 
is distributive.
 
Part v) is trivial.

If one of the spaces $e_{x}Ae_{x}$, $e_{y}Ae_{y}$ or $e_{x}Ae_{y}$ has dimension $\leq 1$ then part vi) is obvious.
In the other case let $a$ be a generator of $e_{x}Ae_{y}$. Then $\alpha_{x}a$ and $a\alpha_{y}$ are not linearly independent modulo $rad\,^{2}e_{x}Ae_{y}$. Up to symmetry
 we can assume that we have 
$\alpha_{x}a= \xi a\alpha_{y} + r$ for some scalar $\xi$ and some $r \in rad\,^{2}e_{x}Ae_{y}$. Then we obtain $\alpha_{x}^{p}a \alpha_{y}^{q}=\xi^{p}a\alpha_{y}^{p+q} + r(p,q)$ with some
 $r(p,q) \in  rad\,^{p+q+1}e_{x}Ae_{y}$ for all $p$ and $q$ by induction on $p$. Now the elements $\alpha_{x}^{i}a\alpha_{y}^{n-i}$  with $ i \geq n$ generate $rad\,^{n}e_{x}Ae{y}$ for 
any $n$ and this space is zero for large $n$.
By descending induction it follows that all $rad\,^{i}e_{x}Ae_{y}$ are generated by the $a\alpha_{y}^{j}$ with $j\geq i$. Thus $e_{x}Ae_{y}$ is cyclic as a module over $e_{y}Ae_{y}$ 
whence uniserial.
 \end{proof}

\subsection{The subdivision }

We show that the minimal non-distributive algebras fall into three  disjoint classes.
 A pair $(a,z)$ of points is called critical if the bimodule $e_{z}Ae_{a}$ is not uniserial. 
The critical index $i(a,z)$ of such a pair pair is then the smallest natural number such that $rad\,^{i}e_{z}Ae_{a}/rad\,^{i+1}e_{z}Ae_{a}$ has dimension $\geq 2$.
Furthermore, given a point $x$ in $Q$, we denote by $I_{x}$ the two-sided ideal generated by all paths of lengths $2$ with $x$ as the interior point.
 Recall that $x$ is called a node if $I_{x} \subseteq I$ holds.
\begin{proposition} Let $A=kQ/I$ be an algebra which is not distributive but any proper quotient is.
Then the following holds:
\begin{enumerate}
 \item For any critical pair $(a,z)$ with critical index $i$ we have $rad\,^{i+1}e_{z}Ae_{a}=0$ and $S(a,z):= rad\,^{i}e_{z}Ae_{a}$ is a  bimodule of dimension 2 which is contained in $S$.
\item There is only one critical pair $(a,z)$ and we have $S=S(a,z)$. 
Moreover we are in one of the following three situations:
\begin{enumerate}
 \item ( type 1 ) $a=z$, $i(a,z)=1$, $e_{a}Ae_{a}\simeq k[X,Y]/(X,Y)^{2}$ and $I_{a}\subseteq  I$.
\item (type 2) $a\neq z$, $i(a,z)=0$, $a$ is a source, $z$ a sink in $Q$ and for $e=e_{a}+ e_{z}$ the algebra $eAe$ is isomorphic to the path-algebra of the Kronecker
 quiver $K_{2}$ consisting of two parallel arrows.
\item (type 3) $a \neq z$, $i(a,z)=1$ and for $e= e_{a} + e_{z}$ the algebra $eAe$ is isomorphic to the path algebra of the quiver with one loop $\alpha$ in $a$, one arrow $\beta$ from $a$ to $z$ 
and one loop $\gamma$ in $z$ divided by the relations $\alpha^{2}= \gamma^{2}= \gamma \beta \alpha =0$.
\end{enumerate}

\end{enumerate}
 
\end{proposition}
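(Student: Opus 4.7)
The overall strategy is a repeated application of what I will call the \emph{quotient trick}: by Proposition~1(iii)--(iv), any bimodule quotient $V = rad\,^{i} e_z A e_a / rad\,^{i+1} e_z A e_a$ of dimension $\geq 2$ gives three subbimodules of $e_z A e_a$ that witness non-distributivity of $\mathcal{I}$, so whenever I can cook up a nonzero proper ideal $J$ of $A$ for which $J \cap e_z A e_a$ avoids these three witnesses, the quotient $A/J$ is still non-distributive and minimality forces $J=0$. The main technical tool for computing such intersections is Proposition~1(i), specifically $\langle L \rangle \cap e_x A e_y = L$ for any subbimodule $L \subseteq e_x A e_y$.

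For part (i) the trick gets used three times. First, taking $J = \langle rad\,^{i+1} e_z A e_a\rangle$ gives $J \cap e_z A e_a = rad\,^{i+1} e_z A e_a$ directly from Proposition~1(i), so $V$ persists in $A/J$ and minimality yields $rad\,^{i+1} e_z A e_a = 0$. The formula $rad\,^{i+1} e_z A e_a = e_z(N \cdot rad\,^{i} + rad\,^{i} \cdot N) e_a$ then gives $(e_z N e_z) \cdot S(a,z) = 0 = S(a,z) \cdot (e_a N e_a)$, which makes every $k$-subspace of $S(a,z)$ a subbimodule; a second application of the trick to any $1$-dimensional subspace forces $\dim S(a,z) = 2$. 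For $S(a,z) \subseteq S$ I would kill the ideal $J = \langle N \cdot S(a,z) + S(a,z) \cdot N \rangle$: a direct computation shows $e_z J e_a$ reduces, after squeezing and using the two vanishings just established, to $(e_z N e_z) \cdot S(a,z) \cdot (e_a A e_a) + (e_z A e_z) \cdot S(a,z) \cdot (e_a N e_a)$, both zero, so minimality kills $J$.

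In part (ii), uniqueness of the critical pair is another single-line application of the same trick: a hypothetical second critical pair $(a',z')$ would produce a bimodule $S(a',z') \subseteq S$ in $e_{z'} A e_{a'}$, whose full ideal equals $S(a',z')$ itself (since $S(a',z') \subseteq S$) and so has zero intersection with $e_z A e_a$. The same argument applied to any simple subbimodule of $S$ not contained in $S(a,z)$ gives $S = S(a,z)$. For the trichotomy, uniqueness forces $e_aAe_a$ and $e_zAe_z$ to be uniserial. If $a = z$, Proposition~1(v) says $e_aNe_a$ has at least two generators, so $\dim (e_aNe_a)/(e_aNe_a)^2 \geq 2$, which pins down $i = 1$, $\dim e_aNe_a = 2$, $(e_aNe_a)^2 = 0$ and $e_aAe_a \cong k[X,Y]/(X,Y)^2$; the inclusion $I_a \subseteq I$ then follows because $\langle N e_a N\rangle \cap e_aAe_a \subseteq (e_aNe_a)^2 = 0$, one last quotient-trick. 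If $a \neq z$, Proposition~1(vi) collapses $i$ to $\{0,1\}$. For $i = 0$, $e_zAe_a = S(a,z)$ is the $2$-dimensional socle bimodule; any arrow into $a$ or out of $z$ generates an ideal disjoint from $e_zAe_a$ (because $S \cdot N = N \cdot S = 0$), so $a$ is a source, $z$ is a sink, and $eAe \cong kK_2$ by dimension count. For $i = 1$, the top of $e_zAe_a$ is $1$-dimensional (one arrow $\beta \colon a \to z$) and both loops $\alpha,\gamma$ must be present (else $\dim rad\, e_zAe_a < 2$); the relations $\alpha^2 = \gamma^2 = \gamma\beta\alpha = 0$ drop out from $rad\,^2 e_zAe_a = 0$ after a final quotient-trick check that any surviving $\alpha^2$, $\gamma^2$, or spurious arrow $z \to a$ generates an ideal whose intersection with $e_zAe_a$ lies already inside $rad\,^2 e_zAe_a = 0$.

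The main obstacle throughout is the intersection bookkeeping: each invocation of the quotient trick needs a careful verification, via Proposition~1(i) and the radical formula, that the ideal being killed meets $e_z A e_a$ in a subspace small enough---usually zero---to leave the three witnesses of non-distributivity intact. Once these intersections are under control, the trichotomy and the precise shape of $eAe$ in each type are essentially read off from the dimension data, with Proposition~1(v)--(vi) doing the work of collapsing the critical index to $\{0,1\}$.
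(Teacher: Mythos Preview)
Your proposal is correct and follows essentially the same route as the paper: repeated applications of the minimality hypothesis to ideals whose intersection with $e_zAe_a$ is controlled by Proposition~1(i). The order of steps in part~(i) differs slightly (the paper proves $S(a,z)\subseteq S$ before $\dim S(a,z)=2$, you do the reverse), and for $I_a\subseteq I$ in type~1 you kill $Ne_aN$ in one blow where the paper argues one path $\beta\alpha$ at a time, but these are cosmetic.

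The one place that deserves more care is type~3, the vanishing of $e_aAe_z$. Your claim that a nonzero $f\in e_aAe_z$ generates an ideal with $\langle f\rangle\cap e_zAe_a\subseteq rad^{\,2}e_zAe_a$ does \emph{not} follow by the same mechanism as for $\alpha^2$ and $\gamma^2$: a product $gfh$ with $g,h\in e_zAe_a$ lands a priori only in $rad\,e_zAe_a$, not in $rad^{\,2}$. What actually works (once $\alpha^2=\gamma^2=0$ are already established) is the two-sided squeeze
\[
gfh\in (e_zNe_z)\cdot e_zAe_a\ \cap\ e_zAe_a\cdot(e_aNe_a)=k\gamma\beta\cap k\beta\alpha=0,
\]
the last equality because $\gamma\beta$ and $\beta\alpha$ together span the $2$-dimensional $S(a,z)$ and are therefore independent. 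The paper sidesteps this computation by a structurally different argument: if $\langle f\rangle\cap e_zAe_a\neq 0$ then some $e_zvf$ and $fwe_a$ are nonzero, which forces the quiver of $eAe$ to be an oriented $2$-cycle without loops, so $eAe$ is uniserial and $e_zAe_a$ is a uniserial bimodule, contradicting criticality of $(a,z)$.
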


\begin{proof} We consider the two-sided  ideal $J$ generated by $rad\,^{i+1}e_{z}Ae_{a}$. Then we have $e_{z}Ae_{a}\cap J =rad\,^{i+1}e_{z}Ae_{a}$ whence the quotient $A/J$ is 
still
not distributive. By minimality  we have $J=0$ and a fortiori $rad\,^{i+1}e_{z}Ae_{a}=0$. Similarly, if $V:=(Nrad\,^{i}e_{z}Ae_{a} \, + \,rad\,^{i}e_{z}Ae_{a}N) \neq 0$ we look at the
non-zero two-sided ideal $J$ it
 generates.
Because of $J\cap e_{z}Ae_{a} = 0$ the proper quotient $A/J$ is again not distributive and so $J=0$ and a fortiori $V=0$. This means that   $rad\,^{i}e_{z}Ae_{a}$ 
is contained in $S$. If the dimension  of 
 $rad\,^{i}e_{z}Ae_{a}$ is strictly greater than $2$  we choose a non-zero subbimodule $J$ of codimension $2$ in $rad^{i}e_{z}Ae_{a}$. Then  $J$ is even a two-sided ideal and  $A/J$ 
is still not distributive. This contradiction shows that $dim \,S(a,z) =2$. 

There is at least one critical pair $(a,z)$ and we have $S=S(a,z)\oplus S'$ for some two-sided ideal $S'$. This ideal is zero because   $A/S'$ is still rerepresentation-infinite. Thus 
we have $S=S(a,z)$ and there is only one critical pair. We discuss the different possibilities.

 For $a=z$ we have $i=i(a,z)=1$ and  $e_{a}Ae_{a} \simeq k[X,Y]/(X,Y)^{2}$.
For any path $p=\beta\alpha$ of length $2$ with interior point $a$
we consider the two-sided ideal $J$ generated by $\overline{p}$. For any paths $v,w$ we have that $e_{a}\overline{v\beta}$ and $\overline{\alpha}wte_{a}$ are in $rad\,e_{a}Ae_{a}$ 
whence their product vanishes and $J\cap e_{a}Ae_{a}=0$. Thus $A/J$ is still not distributive and we have $J=0$ by minimality. Thus we have $I_{a}\subseteq I$.

For $a\neq z$ all $e_{x}Ae_{x}$ are uniserial rings and we can apply the last part of proposition 1 to see that only $i=i(a,z)=0$ and $i=1$ are possible.
In the case $i=0$ we have $S(a,z)=e_{z}Ae_{a}$. Take an element $f$ in some $e_{a}Ne_{y}$. Then the two-sided ideal $J$ generated by $f$ is spanned by products $vfw$ and 
the 
intersection with $e_{z}Ae_{a}$ by products $e_{z}vfwe_{a}$. This product vanishes because $f$ annihilates the element $e_{z}v$ from $S(a,z)$. Thus $A/J$ is still not distributive and we
 conclude $J=0$ whence $f=0$. It follows  that $x$ is a source. Dually  $z$ is a sink and so $eAe$ has the wanted form.

Finally we look at the case $a\neq z$, $i=1$ and $S=rad e_{z}Ae_{x}$. Let $f$ be in $rad\,^{2}e_{a}Ae_{a}$ and let $J$ be the two-sided ideal generated by $f$. Then the intersection of $J$ with 
$e_{z}Ae_{a}$ is spanned by products $e_{z}vfwe_{a}$ which are all $0$. We get that $J=0$ and $f=0$, i.e. $ dim\, e_{a}Ae_{a}=2$  and dually $ dim\, e_{z}Ae_{z}=2$. 
 Let $f$ be an element in $e_{a}Ae_{z}$ such that the intersection of the ideal $J$ generated by $f$ with $e_{z}Ae_{a}$ 
is not $0$. Then there is a product $e_{z}vfwe_{a}\neq 0$. The non-zero products $fwe_{a}$ and $e_{z}vf$ show that the quiver of $eAe$ has no loops and so it is an oriented cycle.
 But then $eAe$ 
is uniserial. Thus the intersection $J\cap e_{z}Ae_{a}$ is zero, $J=0$ and $f=0$. It follows that $eAe$ has the wanted shape.

\end{proof}

A non-distributive algebra satisfies the second Brauer-thrall conjecture as Jans has shown by direct calculations already in \cite{Jans}. 
A  proof of his results via representation embeddings is given in \cite[section 3.1]{Bemb}.

\subsection{Glueing and separating: the relation between the first two types}

We recall and refine a little bit the well-known constructions of glueing a source and a sink or separating a node into a source and a sink  \cite{Martinez}.

So let $Q$ be a quiver with a proper source $a$ and a proper sink $z$. Denote by $Q'$ the quotient obtained by identifying $a$ and $z$ to one point $x$. The other points of $Q'$ and the arrows
are just dashed versions of those of $Q$. It follows from the universality of path-algebras that there is an algebra-homomorphism $\phi:kQ' \longrightarrow kQ$ with $\phi(\alpha'
)= \alpha$ for each arrow, $\phi(e_{y'})=e_{y}$ for all $y$ different from $a$ and $z$ and $\phi(e_{x})=e_{a}+e_{z}$. The image of $\phi$ is the subalgebra $B$ of $kQ$ generated by 
$f=e_{a}+e_{z}$, by the other idempotents and by all arrows. We denote by $I(x)$ the ideal of $kQ'$ generated by all path $\beta'\alpha'$ where $x$ is the end-point of $\alpha'$.
We have $\phi(\beta'\alpha')= \phi(\beta' e_{x}\alpha')= \beta (e_{a}+e_{z})\alpha=0$ because there is no arrow ending in $a$ or starting in $z$. Thus $I(x)$ lies in the kernel of $\phi$.
On the other hand the paths in $Q'$ of length at least $1$ and not having $x$ as an interior point are in bijection under $\phi$ with all proper paths in $Q$. Thus $\phi$ induces an isomorphism
$kQ'/I(x) \simeq B$.

Reversely one can start with a quiver $Q'$ containing a point of transition $x$ and separate this point into an emitter $a$ and a receiver $z$ to obtain a quiver $Q$ with a proper source $a$ 
and a proper sink $z$. Clearly these operations on quivers are inverse to each other.

There is an  exact functor $F$ from $mod\,kQ $ to $mod\, kQ'/I(x)$ defined in the language of representations by $FM(x)=M(a)\oplus M(z)$, $FM(y')=M(y)$ for $y'\neq x$  
and by the obvious action on the arrows. This functor maps indecomposables to indecomposables and it hits all indecomposables up to isomorphism. The two simples corresponding 
to the points $a$ and $z$ are the only two non-isomorphic indecomposables that become isomorphic. In fact $F$ is just the restriction to $B$ if one identifies $B$ with $kQ'/I(x)$.

\begin{proposition}
 We keep all the assumptions and notations introduced above. Let $J$ be a two-sided admissible ideal in $kQ$ such that $A=kQ/J$ is finite-dimensional and let $J'$ be the inverse image of $J$ 
under $\phi$ and define $A'=kQ'/J'$. Then we have:
\begin{enumerate}

 \item $A$ is distributive iff $A'$ is distributive.
\item $A$ is minimal representation-infinite iff $A'$ is so.
\item $A$ is a non-distributive minimal representation-infinite algebra of type 2 with respect to $a$ and $z$ iff $A'$ is one of type 1 with respect to $x$.
\end{enumerate}

\end{proposition}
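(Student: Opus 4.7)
My plan is to work with the identification $A' \cong \pi(B)$ where $B \subseteq kQ$ is the subalgebra generated by $f = e_a + e_z$, the remaining primitive idempotents, and all arrows, so that $A' \hookrightarrow A$ is a codimension-one embedding with vector-space complement $k(e_a - e_z)$. The single crucial consequence of $a$ being a proper source and $z$ a proper sink is $e_a N = 0 = N e_z$ in $A$.

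For part (i), I invoke Proposition 1 iv) and compare the bimodule lattices $\mathcal{B}(y,w)$ for $A$ and $A'$. For $y', w' \neq x$ one has $e_{y'} A' e_{w'} \cong e_y A e_w$ on the nose. For mixed pairs, $e_x A' e_{w'} \cong e_z A e_w$ and $e_{y'} A' e_x \cong e_y A e_a$, with the same bimodule structure since the radical of $e_x A' e_x$ acts as zero on these spaces. The delicate case is $y' = w' = x$: the local ring $e_x A' e_x$ equals $k \cdot 1 \oplus e_z A e_a$ with $(e_z A e_a)^2 = 0$, so its ideal lattice is the subspace lattice of $e_z A e_a$ with a top element adjoined, which is distributive iff $\mathcal{B}_A(a, z)$ is. The remaining $A$-pairs $(a,w), (y,z), (a,z)$ vanish and $(a,a), (z,z)$ are one-dimensional, so every lattice is accounted for.

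For part (ii) the heart is an ideal equality. Using $e_a N = N e_z = 0$ and $A = A' + k(e_a - e_z)$, for any $s \in N$ one has $(e_a - e_z) s = -e_z s = -f s \in A' s$ and dually $s(e_a - e_z) = s e_a = s f \in s A'$; hence the two-sided $A$-ideal and the two-sided $A'$-ideal generated by any subset of $N$ coincide. Therefore the admissible ideals of $A$ and of $A'$ are literally the same subsets of the common radical $N = N'$, and for each such $I$ the quotient $A'/I$ is precisely the glueing of $A/I$ at $a, z$. The restriction functor between these quotients preserves and reflects indecomposables, collapsing only the two simples at $a, z$ to one, so $A/I$ is representation-finite iff $A'/I$ is. Taking $I = 0$ yields the representation-infinite equivalence, and varying over all non-zero admissible $I$ yields the minimality equivalence.

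For part (iii), apply Proposition 2. If $A$ is type~2, then $e_z A e_a$ is two-dimensional, so $e_x A' e_x = k \oplus e_z A e_a$ with squared-zero radical gives $e_x A' e_x \cong k[X,Y]/(X,Y)^2$; combined with $I_x \subseteq J'$ (which holds since $\phi(\beta' \alpha') = 0$ for every length-$2$ path $\beta' \alpha'$ through $x$), Proposition 2 forces $A'$ to be type~1 at $x$. Conversely, if $A'$ is type~1, the extremality of $a$ and $z$ rules out loops at them, excluding types~1 and~3 of Proposition~2 for $A$, which must therefore be of type~2 with critical pair $(a, z)$. The main obstacle is the ideal-closure computation in (ii); it hinges on the source/sink vanishing $e_a N = N e_z = 0$ and is not a formal fact about codimension-one subalgebras, but once in hand everything else reduces to bookkeeping against Propositions~1 and~2.
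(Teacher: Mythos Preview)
Your argument is correct and, for parts (ii) and (iii), essentially identical to the paper's: both hinge on the observation that the two-sided ideals of $A$ and of $A'$ contained in the common radical $N$ are literally the same sets, together with the restriction functor $F$, and both compute $e_{x}A'e_{x}=kf\oplus e_{z}Ae_{a}$ with square-zero radical.

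The one place you diverge is part (i). The paper simply reuses the ideal equality $\mathcal{I}'(A)=\mathcal{I}'(A')$ --- which you yourself establish in your part (ii) --- and concludes immediately via Proposition~1\,(iv) that $A$ is distributive iff $A'$ is. You instead run a case-by-case comparison of the bimodule lattices $\mathcal{B}(y,w)$ on both sides. This works, but it is more laborious and somewhat redundant: once you have shown (as you do) that $A$-ideals and $A'$-ideals inside $N$ coincide, distributivity of $\mathcal{I}'$ transfers for free. A small streamlining would be to move your ideal-closure computation to the front and invoke it for (i) as well. There is also a minor notational slip: in the paper's convention $\mathcal{B}(x,y)$ denotes subbimodules of $e_{x}Ae_{y}$, so what you call $\mathcal{B}_{A}(a,z)$ should be $\mathcal{B}_{A}(z,a)$; the mathematics is unaffected.
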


\begin{proof} For an algebra $C$ we denote by $\mathcal{I'(C)}$ the lattice of two-sided ideals of $C$ contained in the radical. Recall that $C$ is distributive iff $\mathcal{I'(C)}$  
is distributive. Now $\mathcal{I'(A')}$ and $\mathcal{I'(B/J)}$ are isomorphic. Any two-sided ideal of $B/J$ contained in the radical is automatically a two-sided ideal in $A$. Thus part i)
 is proven.

The functor $F$ applied to the full subcategories of representations annihilated by $J$ resp. by $J'$ shows that $A$ is representation-infinite iff $A'$ is so. 
Now a representation-infinite algebra $C$ is minimal representation-infinite iff all quotients $C/I$ with $I$ contained in the radical are representation-finite.
 For $A$ and for $A'$ these ideals correspond each other and the representation types of the quotients coincide. Part ii) follows.

We know already that $A$ is non-distributive minimal representation-infinite iff $A'$ is so.   We have $e_{x}A'e_{x}\simeq f(B/J)f= f (rad\,A)\,f \oplus kf$ and
 $fAf=f (rad\,A)\,f \oplus ke_{a} \oplus ke_{z}$. Part iii) follows from proposition 2 by comparing the dimensions.
 
\end{proof}

\subsection{General remarks on the proof}
The  basic minimal representation-infinite algebras that are not distributive of types 2 or 3  will be studied in the next two sections.
 We call such an algebra suspicious.
The only critical pair is denoted by $(a,z)$ and the two-dimensional two-sided socle by $S$.

We consider the algebra often as a $k$-category with the points of  $Q$ as objects and with the $A(x,y)=e_{y}Ae_{x}$ as morphism spaces.
Any non-zero morphism $f \in A(x,y)$ can be prolongated to a non-zero morphism $gfh \in S$ and so we have $A(a,y)\neq 0 \neq A(y,z)$ for all $y$. A path $p$ in $Q$ 
is called a zero-path resp. a non-zero-path if $\overline{p}=0$ resp. $\overline{p}\neq 0$. Recall that we work with a fixed presentation. Any non-zero path $p$ from $x$ to $y$ can be
 prolongated to a non-zero-path $p_{2}pp_{1}$ 
with
$\overline{p_{2}pp_{1}} \in S$. Such a path is called long.

Observe  that all $A(x,x)$ are uniserial and all $A(x,y)$ are uniserial for $(x,y)\neq (a,z)$. A point $x$ is called thin if $dim \,A(x,x)=1$ and thick otherwise.   Given two morphisms $f,g \in A(x,y)$ we write $f\sim g$ 
if both elements generate the same subspace of $A(x,y)$. For three thin points $x_{1},x_{2},x_{3}$ with $(x_{1},x_{3})\neq (a,z)$ and
 morphisms $f,g \in A(x_{1},x_{2})$, $h \in A(x_{2},x_{3})$ one has a nice cancellation property: $hf \sim hg  \not\sim 0$ implies $f \sim g$. We often use that the situation is self-dual. 
In particular there
is a dual cancellation result.

Our main method to derive all the wanted results is to look at a full subcategory $A'$ of $A$ supported  by $5$ points at most and at its quiver $Q'$.
Then any proper quotient of $A'$ has to be representation-finite. To exclude certain possibilities we will  construct a quotient of $A'$ which is defined by zero-relations. Then there is
 a Galois-covering $\tilde{A'}$ 
given by  an infinite tree with relations. The group is free and it acts freely so that it is sufficient to find a representation-infinite tree-algebra as a full convex subcategory of 
$\tilde{A'}$ and this is always easy.

Our method is based on the elementary part of Galois-coverings as defined by Gabriel ( see \cite[theorem 16, part a)]{BSURVEY} ). It was applied again and again in Gabriels proof 
for the structure and disjointness theorems of non-deep contours ( see \cite[remark 3.8 ]{BGRS} ). But there the situation is more complicated  and one cannot 
always reduce to a 
quotient $A'$ given by zero-relations. In fact 
  the only representation-infinite algebras
we need to know are quiver algebras of types $\tilde{A}_{n}$, $\tilde{D}_{n}$ and $\tilde{E}_{6}$.

\section{Algebras of type 2}
\subsection{Thick points}
Throughout this section $A$ is a suspicious algebra of type 2 with quiver $Q$.
Thus we have a source $a$ and a sink $z$. Let $b$ be a thick point which is of course different from $a$ and $z$.
 We choose a generator $r$ of $rad\, A(b,b)$ as well as generators $s$ resp. $t$ of $A(a,b)$ resp. $A(b,z)$ as modules over $A(b,b)$.
\begin{lemma} 
  The full subcategory $A'$ supported by $a,b,z$ is given by the quiver $Q'$ with  arrows $\alpha:a \rightarrow b$, $\beta:b \rightarrow z$ and 
 $\rho:b \rightarrow b$ and  the relation $\rho^{2}=0$.
\end{lemma}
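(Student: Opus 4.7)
The plan is first to extract the structure of $A'$ from the general facts about suspicious algebras of type~2, then to read off the quiver $Q'$, and finally to pin down the relations using the minimality of $A$ via the Galois-covering strategy of \S2.4.

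From Proposition~2 applied in the type~2 case, $A(a,a) = A(z,z) = k$, $A(z,a) = 0$, and $A(a,z) = S(a,z)$ of dimension~$2$. Since $b$ is thick and $b \notin \{a,z\}$, Proposition~1(v) makes $A(b,b)$ a uniserial local ring with principal radical generator~$r$ of nilpotency index $m \ge 2$. Since $(a,b),(b,z) \neq (a,z)$ these pairs are not critical, so $A(a,b)$ and $A(b,z)$ are uniserial bimodules, and Proposition~1(vi) presents them as cyclic modules $A(a,b) = A(b,b)\cdot s$ and $A(b,z) = t\cdot A(b,b)$, of dimensions $p$ and $q$ respectively, with bases $r^{i}s$ and $tr^{j}$. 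Computing $\mathrm{rad}\,A'(x,y)/\mathrm{rad}^{2}A'(x,y)$ and using that $a$ is a source and $z$ a sink, the quiver $Q'$ of $A'$ carries one arrow $\alpha$ (class of $s$) from $a$ to $b$, one arrow $\beta$ (class of $t$) from $b$ to $z$, one loop $\rho$ (class of $r$) at $b$, and a further $2 - \dim(A(b,z)\cdot A(a,b))$ arrows from $a$ to $z$.

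Identifying $A'$ with $kQ'/(\rho^{2})$ therefore reduces to three claims: $(i)$ $r^{2} = 0$; $(ii)$ $rs \neq 0$ and $tr \neq 0$, i.e.\ $p=q=2$; $(iii)$ $ts$ and $trs$ are linearly independent in $A(a,z)$. Granting all three, $\dim A' = 1+1+2+2+2+2 = 10 = \dim kQ'/(\rho^{2})$, and the natural surjection is an isomorphism. The lever is the observation from \S2.4 that every proper quotient of $A'$ is representation-finite: a nonzero ideal $J \subseteq \mathrm{rad}\,A'$ generates a proper nonzero ideal $\langle J\rangle_{A}$ of $A$ contained in $N$, so $A/\langle J\rangle_{A}$ is rep-finite by minimality, and extending $A'/J$-modules by zero on the remaining vertices injects indecomposables of $A'/J$ into those of $A/\langle J\rangle_{A}$.

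Assuming a claim fails, I will construct a proper quotient $\bar A'$ of $A'$, defined purely by zero-relations (imposing any commutativity inside $A(a,z)$ as a zero-relation), whose Galois covering $\widetilde{\bar A'}$ is an infinite tree algebra on which a free group acts freely. In $\widetilde{\bar A'}$ I will locate a full convex subcategory of extended Dynkin type: for~$(i)$, unfolding the loop produces a line of $b$-vertices, and two sufficiently separated $b$-lifts together with their adjacent $a$- and $z$-lifts carry a $\widetilde{D}_{n}$-subquiver; for~$(ii)$ and~$(iii)$, the failure of the claim places at least one extra arrow $a \to z$ in $Q'$, and killing $\rho$ in $\bar A'$ gives a triangle-shaped quotient containing an $\widetilde{A}_{n}$-subquiver. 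This contradicts rep-finiteness of the proper quotient $\bar A'$. The principal obstacle will be the case analysis in this final step: across the several subcases ($r^{2}\neq 0$ with various $p,q$; $rs=0$; $tr=0$; linear dependence $trs = c\cdot ts$), identifying the correct extended Dynkin subquiver and verifying that the zero-relations inherited by $\bar A'$ restrict trivially to the chosen subquiver, so that it really is representation-infinite.
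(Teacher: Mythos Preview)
Your overall strategy matches the paper's, and the setup is correct. But there are two genuine gaps where your specific choices of quotient and of extended Dynkin subquiver do not work.

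\textbf{The extra-arrow step (your (ii) and (iii)).} Suppose $(i)$ and $(ii)$ hold but $ts=0$ (which prolongation does \emph{not} exclude: prolongation of $s$ only gives some $t r^{j}s\neq 0$). Then $A(b,z)A(a,b)=k\cdot trs$ is one-dimensional, so there is exactly one extra arrow $\gamma:a\to z$. Killing $\rho$ produces the algebra on the triangle $\alpha,\beta,\gamma$ with the single relation $\beta\alpha=0$. Its Tits form $q(x,y,w)=x^{2}+y^{2}+w^{2}-xy-yw$ is positive definite, so this quotient is representation-\emph{finite} and gives no contradiction. The paper's argument at this point is different and uniform: as soon as there is any arrow $a\to z$, the separated quiver of $Q'$ contains the $4$-cycle $a^{+}\!-b^{-}\!-b^{+}\!-z^{-}\!-a^{+}$ of type $\tilde{A}_{3}$, so the proper quotient $A'/\operatorname{rad}^{2}A'$ is already representation-infinite.

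\textbf{The step $r^{2}=0$ (your (i)).} Two ingredients are missing. First, you need an a priori bound on the nilpotency index of $r$; the paper gets $r^{3}=0$ by looking at the representation-finite two-point subcategory $A''$ on $b,z$ and its cover. Second, and more seriously, the passage to a zero-relation algebra is \emph{not} obtained by quotienting (``imposing commutativity as a zero-relation'') but by a change of presentation: since $\dim S=2$ there is a nontrivial relation $x_{0}\overline{\beta\alpha}+x_{1}\overline{\beta\rho\alpha}+x_{2}\overline{\beta\rho^{2}\alpha}=0$, and one replaces $\alpha$ by $x_{0}\alpha+x_{1}\rho\alpha+x_{2}\rho^{2}\alpha$ (or similarly for the other leading coefficient) so that $A'$ itself becomes $kQ'/(\rho^{3},\beta\alpha)$ or $kQ'/(\rho^{3},\beta\rho\alpha)$. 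After this, your $\tilde{D}_{n}$ idea still fails in the case $\overline{\beta\alpha}=0$: the obvious $\tilde{D}_{4}$-star in the cover (the four neighbours of a single $b$-lift) inherits the relation $\beta\alpha=0$ and is representation-finite. The paper instead locates a tame concealed subcategory of type $\tilde{E}_{6}$ there, and a $\tilde{D}_{4}$ only in the other case $\overline{\beta\alpha}\neq 0$; in both cases the chosen subcategory is annihilated by all liftings of $\beta\rho^{2}\alpha$, so it survives in the \emph{proper} quotient $A'/(\overline{\beta\rho^{2}\alpha})$, which yields the contradiction.
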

\begin{proof}   Because $a$ is a source and $z$ is a sink the quiver $Q'$ contains the three arrows mentioned above. We denote by  $n$ be the greatest integer with $r^{n}\neq 0$. Then
 we have $tr^{n}s \neq 0$ by 
the prolongation property. If the elements $tr^{i}s$ with 
$0 \leq i \leq n$ do not generate $S=A(a,z)$ there is an arrow from $a$ to $z$ which implies the contradiction that the separated quiver is a quiver of type $\tilde{A}_{3}$. 
Thus $Q'$ has only three arrows. 

The full subcategory $A''$ supported by $b$ and $z$ is representation-finite with  the quiver containing $\rho$ and $\beta$ and defined by the relation $\rho^{n+1}=0$. 
The universal cover of $A''$ shows that $n\leq 2$ holds.

Suppose $\rho^{2}\neq 0$. Because of $dim\,S=2$ there is  a non-trivial linear relation 
$x_{0} \beta \alpha \, + \,x_{1} \beta \rho \alpha \,+ \,x_{2} \beta \rho^{2} \alpha $.
For $x_{0}\neq 0$ we can replace the presentation $\pi$ by a new presentation $\pi':kQ' \longrightarrow A'$ by defining
 $\pi'(\alpha) = \pi (x_{0}  \alpha \, + \,x_{1} \rho \alpha \,+ \,x_{2}  \rho^{2} \alpha ) $ and so $A'$ is defined by the relations $\rho^{3}$ and  $\beta \alpha$. 
Then the two other paths
 produce a basis of $S$.  Similarly for  $x_{1}\neq 0=x_{0}$ we can  reduce to the relations $\rho^{3}$ and 
$\beta \rho \alpha$. So in all cases $A'$ is defined by zero-relations.  One finds in the corresponding Galois-covering $\tilde{A'}$  as convex subcategories 
for $\overline{\beta\alpha}\neq 0$ a quiver of 
type 
$\tilde{D}_{4}$ or for $\overline{\beta\alpha} = 0$ a tame concealed algebra of type $\tilde{E}_{6}$ which both are annihilated  by all liftings of the  path  $\beta\rho^{2}\alpha$.
 This is a contradiction.
\end{proof}

\begin{lemma}
 We keep the above assumptions and notations.
\begin{enumerate}
 \item There are arrows $\alpha:a \rightarrow b$, $\beta:b \rightarrow z$ in $Q$ and an oriented cycle $\rho:=\rho_{m}\rho_{m-1} \ldots \rho_{1}$ in $b$ of 
length $m\geq 1$ such that $\overline{\beta \alpha}$ and 
$\overline{\beta \rho \alpha}$  give a basis of $S$.
\item $b$ is the only thick point.
\item One has $\overline{\xi\rho_{m}}=0$ for all arrows $\xi \neq \beta$ and therefore $dim \,A(b,x)\leq 1$ for all $x$ with $b\neq x \neq z$.
 
\end{enumerate}

\end{lemma}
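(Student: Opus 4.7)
The plan is to establish the three parts using Lemma 1, the prolongation and cancellation properties of Section 2.4, and the Galois-covering technique described there.

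For part (i), I would produce the cycle first. By Lemma 1, $A(b,b)=k\oplus k\bar\rho$ with $\bar\rho^{2}=0$, and $\bar\rho$ is the image under the presentation $\pi$ of some $k$-linear combination of closed paths at $b$ in $Q$. At least one such closed path must therefore have nonzero image; choose a shortest one, of length $m\geq 1$, and call it $\rho_{m}\cdots\rho_{1}$. After rescaling we may take $\bar\rho=\overline{\rho_{m}\cdots\rho_{1}}$. For the existence of an arrow $\alpha\colon a\to b$ in $Q$, I would argue by contradiction: if no such arrow existed, every path $a\to b$ would factor through some intermediate vertex $y\notin\{a,b,z\}$. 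Then the full subcategory on $\{a,y,b,z\}$, enriched by the Kronecker-like structure on $\{a,z\}$ and the loop at $b$ inherited from Lemma 1, admits a zero-relation quotient (after a change of presentation of the same flavor as in the last paragraph of Lemma 1) whose infinite-tree Galois cover contains a convex subcategory of type $\tilde{D}_{n}$ or $\tilde{E}_{6}$; this contradicts the representation-finiteness of proper quotients of $A$. Dually for $\beta\colon b\to z$. Then $\overline{\beta\alpha}$ and $\overline{\beta\rho\alpha}$ form a basis of $S$ by Lemma 1.

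For part (ii), suppose there were a second thick point $b'$. Applying part (i) to $b'$ yields arrows $\alpha'\colon a\to b'$, $\beta'\colon b'\to z$ and a cycle $\rho'$ at $b'$ with $\overline{\beta'\alpha'}$ and $\overline{\beta'\rho'\alpha'}$ also spanning the two-dimensional $S$, forcing nontrivial linear relations. The full subcategory on $\{a,b,b',z\}$ then contains two cycles and converging arrows to $z$; I would construct, exactly as above, a zero-relation quotient whose Galois cover contains a convex representation-infinite tree (again of type $\tilde{D}_{n}$ or $\tilde{E}_{6}$), contradicting minimality.

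For part (iii), suppose $\overline{\xi\rho_{m}}\neq 0$ for some arrow $\xi\neq\beta$ starting at $b$, say $\xi\colon b\to x$ with $x\neq z$. Prolong $\overline{\xi\rho_{m}}$ to a long morphism $\bar q\,\overline{\xi\rho_{m}}\,\bar p\in S$ via appropriate paths $p\colon a\to v_{m-1}$ and $q\colon x\to z$, and apply the cancellation property of Section 2.4 to show that this yields an element of $S$ outside the span of $\overline{\beta\alpha}$ and $\overline{\beta\rho\alpha}$, contradicting $\dim S=2$. The consequence $\dim A(b,x)\leq 1$ for $x\neq b,z$ then follows: $A(b,x)$ is uniserial (Proposition 1, since $(b,x)\neq(a,z)$), any nonzero element is generated by a path starting at $b$, such a path cannot start with $\beta$ (which leads to the sink $z\neq x$), and the vanishing $\overline{\xi\rho_{m}}=0$ prevents any such path from traversing the full cycle more than once, bounding the dimension by one.

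The main obstacle will lie in parts (i) and (ii): choosing the right zero-relation quotient of the 4- or 5-point subcategory so that its tree Galois cover actually contains a representation-infinite convex subquiver. The relations $\rho^{2}=0$, the basis description of $S$, and the presence of an intermediate vertex $y$ (in (i)) or a second cycle at $b'$ (in (ii)) all interact, and one must verify that a judicious change of presentation reduces the situation to zero-relations without killing the desired $\tilde{D}_{n}$ or $\tilde{E}_{6}$ configuration in the cover.
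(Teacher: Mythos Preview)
Your plan for part (iii) contains a genuine gap. You propose to show that the prolonged long path $\overline{q\,\xi\rho_{m}\,p}$ is an element of $S$ ``outside the span of $\overline{\beta\alpha}$ and $\overline{\beta\rho\alpha}$'', contradicting $\dim S=2$. But $S$ \emph{is} that span, so every element of $S$ lies in it; there is no dimension contradiction available. The paper's argument is different and uses $\rho^{2}=0$ from Lemma~1 in an essential way: since $\alpha$ is an arrow, the path $\rho_{m}p$ from $a$ to $b$ has length $\geq 2$, so its nonzero image lies in $\operatorname{rad}A(a,b)$ and hence $\overline{\rho_{m}p}\sim\overline{\rho\alpha}$; dually $\overline{q\xi}\sim\overline{\beta\rho}$ because $\beta$ is an arrow. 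Multiplying gives $\overline{q\,\xi\rho_{m}\,p}\sim\overline{\beta\rho^{2}\alpha}=0$, contradicting the fact that a long path has nonzero image. The contradiction is ``nonzero equals zero'', not ``$S$ is too big''; the cancellation property alone will not produce what you claim.

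For part (ii) the paper takes a much shorter route than the Galois-covering argument you sketch: in the full subcategory $A'$ on $a,b,b',z$ one has the four arrows $\alpha,\beta,\alpha',\beta'$, and whether the cycles survive as loops in $Q'$ or factor through each other (the only alternative, since $a$ is a source and $z$ a sink), the separated quiver of the proper quotient $A'/\operatorname{rad}^{2}A'$ already contains a subquiver of type $\tilde{A}_{5}$. No change of presentation or inspection of a tree cover is needed, and the two subcases are handled uniformly.

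Your outline for part (i) is close to the paper's: it argues on the $\beta$ side, takes the last arrow $\beta_{1}\colon y\to z$ of a chosen path realizing the generator $t$ of $A(b,z)$, analyses the four-point subcategory on $a,b,y,z$ concretely (there are only two possible shapes for its quiver), and finds a $\tilde{D}_{5}$ in the cover in each case; $\alpha$ is then an arrow by duality.
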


\begin{proof} 

We choose a path $\beta$ with $\overline{\beta}=t$. 
If $\beta$ is not an arrow  we choose a decomposition $\beta= \beta_{1}\beta_{2}$ where $\beta_{1}:y \rightarrow z$ is an arrow. Then $a,b,y,z$ are four
 different points in $Q$  ( $ y\neq b$ follows from  $r^{2}=0$ ) and we look at the full subcategory $A'$ supported by these four points and its quiver $Q'$ which contains the 
arrows $\alpha:a \rightarrow b$, $\gamma:b \rightarrow y$ and $\beta_{1}:y \rightarrow z$. The two elements  $\overline{\gamma \alpha}$ and
 $\overline{\gamma \rho \alpha}$ are  linearly independent in $A(a,y)$ because their products with $\overline{\beta_{1}}$ are so in $A(a,z)$. Thus $A(a,y)$ is cyclic over $A(y,y)$ and so we get
$\dim \,A(a,y)=2= dim\,A(y,y)$ from the last lemma and similarly $\dim \,A(a,b)=2= dim\,A(b,b)=dim \,A(b,y)$. It follows that $A(b,y)$ is uniserial from both sides.  Thus there are only two possibilities 
for the quiver $Q'$ of $A'$: Either one adds an arrow 
$\epsilon:y \rightarrow b$  and the relations $(\gamma \epsilon )^{2}= (\epsilon \gamma)^{2}$ hold or one adds a loop $\rho$ in $b$ and a loop $\sigma$ in $y$ and the relation 
$\gamma \rho \,= \sigma \gamma $ holds. In the second case one can even divide by $\sigma \gamma$ and one gets also a zero-relation algebra. In $\tilde{A'}$ one finds 
in both cases easily
 a convex subcategory with quiver an extended Dynkin diagram of type $\tilde{D}_{5}$ that is annihilated by the liftings of 
the path $\beta\rho\alpha$.  This contradiction shows that $\beta$ is an arrow in $Q$ and so is  $\alpha$ by duality.

Let $b'$ be another thick point. Then the quiver $Q'$ of the full subcategory $A'$ supported at $a,b,b',z$ contains the arrows $\alpha:a
\rightarrow  b$, $\beta: b \rightarrow z $, $\alpha':a
\rightarrow  b'$ and  $\beta': b' \rightarrow z $. If $\rho$ or $\rho'$ factorize it contains also arrows $b \rightarrow b'$ and $b' \rightarrow b$ and then the separated quiver 
to the proper quotient $A'/ rad^{2}\,A'$ contains a
 quiver of type $\tilde{A}_{5}$. The same holds if the loops survive.

If part iii) is not true we find a long path $p_{2}\xi\rho_{m}p_{1}$. Because $\alpha$ is an arrow  we have  $\overline{\rho_{m}p_{1}} \sim \overline{\rho\alpha}$ and because $\beta $ is 
an arrow we get $\overline{p_{2}\xi} \sim \overline{\beta\rho}$ i.e. 
$\overline{p_{2}\xi\rho_{m}p_{1}}\sim \overline{\beta \rho^{2}\alpha} \sim 0$. This is a contradiction. The second statement follows because $A(b,x)$ is cyclic over $A(b,b)$.

\end{proof}
\subsection{Uniqueness and disjointness for long paths}
\begin{lemma}
 Let $b$ be the thick point with an oriented cycle $\rho:=\rho_{m}\rho_{m-1} \ldots \rho_{1}$ in $b$ of 
length $m\geq 1$ such that $\overline{\rho}=r$. Then the following holds:
\begin{enumerate}
\item Any long path $p$ starting with $\alpha$  coincides with $\beta \alpha$ or $\beta \rho \alpha$.
\item Any long path $q$  not starting with $\alpha$  has no interior point in common with $\beta \rho \alpha$. 
\end{enumerate}

\end{lemma}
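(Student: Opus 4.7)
The plan is to apply the general method outlined in section 2.4: pass to a suitable full subcategory $A' \subseteq A$ supported on few points, adjust the presentation to reduce to a quotient defined by zero-relations, lift to a Galois cover $\widetilde{A'}$, and exhibit a representation-infinite tree algebra of type $\tilde{A}_n$, $\tilde{D}_n$ or $\tilde{E}_6$ as a convex subcategory annihilated by the lifted relations. Since every proper quotient of $A$ is representation-finite, this yields the desired contradiction.

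For part (i), I would write $p = p'\alpha$ with $p'\colon b \to z$ a non-zero path. First observe that $\alpha$ is the unique arrow $a\to b$ and $\beta$ the unique arrow $b\to z$, since by admissibility of the presentation a second arrow would produce a second linearly independent class modulo the radical, contradicting the uniseriality of $A(a,b)$ respectively $A(b,z)$. Now analyze the first arrow $\gamma_1$ of $p'$ out of $b$. If $\gamma_1=\beta$ then $p'=\beta$. If $\gamma_1=\rho_1$, applying the dual of Lemma~2 (iii) inductively at each cycle vertex $c_j$ forces $p'$ to follow the cycle all the way back to $b$ and exit via $\beta$, giving $p' = \beta\rho$; a second traversal would produce $\overline{\beta\rho^2}=0$, which is incompatible with $p'$ being long. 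If $\gamma_1=\xi\colon b\to y$ with $y\notin\{c_1,z\}$, I would form the subcategory $A'$ supported on $\{a,b,y,z\}$ together with the cycle, argue as in Lemma~1 to pass to a zero-relation quotient, and locate a convex subcategory of type $\tilde{D}_n$ in the Galois cover that annihilates the lift of $p$, contradicting $\overline{p}\neq 0$.

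For part (ii), suppose $q$ is a long path $a \to z$ starting with an arrow $\alpha' \neq \alpha$ and sharing an interior vertex with $\beta\rho\alpha$. If the shared vertex is $b$, factor $q = q_2 q_1$; since $\alpha$ is the unique arrow $a\to b$ and $q_1$ has length $\geq 2$, we have $\overline{q_1}\in \mathrm{rad}\,A(a,b) = k\cdot \overline{\rho\alpha}$, so $\overline{q_1}$ is a non-zero multiple of $\overline{\rho\alpha}$. Restricting to the subcategory on $\{a,y,b,z\}$ with $\alpha'\colon a \to y$ (and cycle vertices as needed), this identity combined with the cancellation property yields enough zero-relations, after a change of presentation, to apply the Galois-cover method and exhibit an extended Dynkin subcategory containing a lift of $q$, contradicting $\overline{q}\neq 0$. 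If instead the shared vertex is some $c_j$ with $1\leq j\leq m-1$, a symmetric argument factoring $q$ through $c_j$ applies, using that any non-zero path $a\to c_j$ or $c_j\to z$ must interact in a very restricted way with the cycle.

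The delicate step is the cycle-traversal argument in (i): one must promote the "dead-end" observation of Lemma~2 (iii) from the vertex $b$ to every intermediate cycle vertex $c_j$, ruling out arrows leaving the cycle at $c_j$ that could recombine into a non-zero composition $b\to z$. Once this local rigidity of the cycle is in place, the Galois-cover contradictions in both parts proceed in direct analogy with the arguments of Lemmas~1 and~2.
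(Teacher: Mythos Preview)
Your overall strategy matches the paper's, but the execution has two real gaps.

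For part (i), the plan to ``promote Lemma~2(iii) to every cycle vertex $c_j$'' is not what the paper does and is not obviously available. Lemma~2(iii) is a statement about the thick point $b$; its proof uses that $\alpha$ and $\beta$ are arrows and that $\overline{\rho}$ generates $\mathrm{rad}\,A(b,b)$. None of this structure is present at a thin interior vertex $c_j$, so an appeal to ``the dual of Lemma~2(iii)'' there is unjustified. The paper instead runs an induction on the position along the cycle: assuming $\zeta_1=\rho_1,\ldots,\zeta_{i-1}=\rho_{i-1}$, it passes to the \emph{five}-point subcategory on $a,b,c,d,z$ with $c$ the target of $\rho_i$ and $d$ the target of $\zeta_i$, shows $A(c,d)=A(d,c)=0$, and then distinguishes two explicit shapes of $Q'$ (Figures~2 and~3) according to whether $A(d,b)$ vanishes; in each case one divides by a single element to get a zero-relation algebra with a $\tilde{D}_4$ in the cover. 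Your sketch (``subcategory on $\{a,b,y,z\}$ together with the cycle'') neither pins down the right subcategory nor anticipates this case split; in particular, adding the whole cycle defeats the bounded-size method of Section~2.4.

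For part (ii), you are working much harder than necessary. The paper does \emph{not} run a fresh covering argument: it reduces (ii) directly to (i) and its dual. If $q$ hits an interior vertex $d\neq b$ of $\rho$, then $\dim A(d,z)=1$ gives $\overline{q_2}\sim\overline{\beta\rho'}$ for a subpath $\rho'$ of $\rho$, so $\beta\rho' q_1$ is a long path ending in $\beta$ but not starting with $\alpha$, contradicting the dual of (i). If $d=b$, then $\overline{q_1}\sim\overline{\rho\alpha}$ (since $\alpha$ is an arrow), so $q_2\rho\alpha$ is a long path starting with $\alpha$, hence equal to $\beta\alpha$ or $\beta\rho\alpha$ by (i); thus $q$ ends with $\beta$, and the dual of (i) forces it to start with $\alpha$. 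Your separate covering argument might be completable, but as written (``yields enough zero-relations\ldots'') it is too vague to stand, and it misses this duality shortcut entirely.
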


\begin{proof} 

Let $p=\zeta_{n}\zeta_{n-1}\ldots
\zeta_{1}\alpha$ be a third long  path. We will derive a contradiction.
First assume $m=1$. For $\zeta:=\zeta_{1}=\rho$ we would get $\zeta_{2}=\beta$ from part iii) of lemma 2 and so $p= \beta\rho\alpha$. Thus  $\zeta:b \rightarrow d$ is different from $\rho$.  We consider the full subcategory $A'$ supported by the four points $a,b,d,z$ and its quiver $Q'$ 
in which the arrows $\alpha,\beta,\rho,\zeta$ still exist. An arrow $\zeta':d \rightarrow b$ would occur in a long path $p_{2}\zeta'p_{1}$. But $a$ and $d$ are thin and so 
$\overline{\zeta\alpha}$ generates $A(a,d)$. Thus $\zeta'\zeta$ is not a zero-path and $\rho$ not an arrow.
Thus there is an arrow $\zeta':d \rightarrow z$ because there is a long path containing $\zeta$.  We have $\overline{\zeta'\zeta} \sim \overline{\beta\rho}$ and we divide 
$A'$ by $\overline{\beta\rho}$ to get a zero-relation algebra having the obvious $\tilde{D}_{4}$ - quiver as a convex subcategory in its universal cover. The situation is shown in figure 1.

\setlength{\unitlength}{0.8cm}
\begin{picture}(15,5)

\put(0.5,-0.5){figure 1}
\put(2,4){\circle*{0.1}}
\put(2,0){\circle*{0.1}}
\put(2,2){\circle*{0.1}}
\put(0,2){\circle*{0.1}}
\put(2,2){\vector(-1,0){2}}
\put(2,4){\vector(0,-1){2}}
\put(2,4){\vector(0,-1){2}}
\put(2,2){\vector(0,-1){2}}
\put(2,0){\vector(-1,1){2}}
\put(2.5,2){\circle{1.0}}

\put(1.5,3.5){$\alpha$}
\put(1,2.2){$\beta$}
\put(2.2,1){$\zeta$}
\put(0,1){$\zeta'$}
\put(2.5,2){$\rho$}

\put(4.8,-0.5){figure 2}
\put(6,4){\circle*{0.1}}
\put(6,0){\circle*{0.1}}
\put(6,2){\circle*{0.1}}
\put(8,2){\circle*{0.1}}
\put(4,2){\circle*{0.1}}
\put(6,2){\vector(-1,0){2}}
\put(7.8,1.9){\vector(-1,0){1.6}}
\put(6.1,2.1){\vector(1,0){1.6}}
\put(6,4){\vector(0,-1){2}}

\put(6.1,1.8){\vector(0,-1){1.6}}
\put(5.9,0.2){\vector(0,1){1.6}}

\put(5.5,3.5){$\alpha$}
\put(5,2.2){$\beta$}
\put(6.2,1){$\zeta$}
\put(5.4,1){$\zeta'$}
\put(7,2.5){$\rho'$}\put(7,1.5){$\rho''$}

\put(9.8,-0.5){figure 3}
\put(11,4){\circle*{0.1}}
\put(11,0){\circle*{0.1}}
\put(11,2){\circle*{0.1}}
\put(13,2){\circle*{0.1}}
\put(9,2){\circle*{0.1}}
\put(11,2){\vector(-1,0){2}}
\put(12.8,1.9){\vector(-1,0){1.6}}
\put(11.1,2.1){\vector(1,0){1.6}}
\put(11,4){\vector(0,-1){2}}

\put(11,2){\vector(0,-1){2}}
\put(11,0){\vector(-1,1){2}}

\put(10.5,3.5){$\alpha$}
\put(10,2.2){$\beta$}
\put(11.2,1){$\zeta$}
\put(9,1){$\zeta'$}
\put(12,2.5){$\rho'$}\put(12,1.5){$\rho''$}

\end{picture}
\vspace{0.8cm}

Thus we have $m\geq 2$. We consider $\rho':=\rho_{1}:b \rightarrow c$ and $\zeta:=\zeta_{1}:b \rightarrow d$ and we assume $\zeta_{1} \neq \rho_{1}$. This time we study the full subcategory $A'$ supported 
by $a,b,c,d,z$ and its quiver $Q'$. Here $d\neq b$ because $r$ is not irreducible, $d\neq z$ because $\zeta \neq \beta$ and finally $d\neq c$ because $\zeta \neq \rho'$. Thus $Q'$ has 
$5$ points and the arrows $\alpha,\beta,\zeta,\rho'$ survive. Now $a,c,d$ are all thin so that all $A(x,y)$ between any of these points have dimension $\leq 1$. From part iii) of lemma 2 we also 
have $dim\,A(b,c)=dim\,A(b,d)=1$.
 We want to show that $A(c,d)=0$. If not there is a non-zero path $\delta:c \rightarrow d$ and so a long path $p_{2}\delta p_{1}$. We have 
$\overline{p_{1}}\sim \overline{\rho'\alpha}$ and $\overline{p_{2}\delta\rho'\alpha}\neq 0$ contradicting that $\zeta$ is an arrow. An analogous reasoning shows $A(d,c)=0$.
From $A(c,b)\neq 0$ we obtain an arrow $\rho'':c \rightarrow b$ and there is no loop at $b$. 

For $A(d,b)\neq 0$ one gets an arrow $\zeta':d \rightarrow b$ and then there is no arrow 
$d \rightarrow z$ because $\zeta'$ occurs in a long path $p_{2}\zeta'\zeta\alpha$. The quiver $Q'$ is shown in figure 2. We have $\overline{\zeta'\zeta} \sim \overline{\rho''\rho'} \sim 
\overline{\rho}$. We divide $A'$ by $\overline{\rho}$ and obtain a zero-relation algebra with a $\tilde{D}_{4}$- quiver in its universal cover.

For $A(d,b)=0$ there is an arrow $\zeta':d \rightarrow z$ because $\zeta$ belongs to a long path. The situation is illustrated by figure 3. 
We have $\overline{\zeta'\zeta}\sim \overline{\beta\rho}$. Dividing by this we end up with another zero-relation algebra with a $\tilde{D}_{4}$- quiver in its universal cover.

We have shown that $\zeta_{1}=\rho_{1}$ and we will show by induction on $i$ for $1\leq i\leq m$ that $\zeta_{i}$ exists and coincides with $\rho_{i}$. The start for the 
induction was just shown 
and we explain the step from $i-1$ to $i$. Consider $\zeta_{j}:d_{j-1}\rightarrow d_{j}$ and $\rho_{j}:c_{j-1}\rightarrow c_{j}$ ( $c_{0}=d_{0}=b$ ) for all $j \leq i-1$. 
Since $c_{i-1}=d_{i-1}\neq z$ the arrow $\zeta_{i}$ exists. Set $c:=c_{i}$ and $d:=d_{i}$. Any non-zero path $\delta:d \rightarrow c$ lies on a long path $\overline{p_{2}\delta p_{1}} \sim \overline{p_{2}\delta\zeta_{i}\rho_{i-1}\ldots \rho_{1}\alpha} \sim 
\overline{p_{2}\rho_{i}\ldots \rho_{1}\alpha}$ whence 
$\overline{\delta\zeta_{i}} \sim \overline{\rho_{i}}$ which is a contradiction. Thus $A(d,c)=0$. For $i\neq m$ i.e. $c_{i}\neq b$ one has also $d_{i}\neq b$ and one shows similarly $A(c,d)=0$.
 We look as before at the full subcategory $A'$ supported 
by $a,b,c,d,z$ and again we end up with the two cases shown in the figures 2 and 3. Argueing as above we always get a contradiction. Finally for $i=m$ we consider the full subquiver
 supported by $a,b,d,z$ and we are in the situation of figure 1 and get the same contradiction.
Thus we obtain $\zeta_{m}\ldots \zeta_{1}\alpha = \rho_{m}\ldots \rho_{1}\alpha$ and this path can only be prolongated by $\beta$.

Finally we consider a long path $p$ from $a$ to $z$ which does not start with $\alpha$. Suppose we have a proper decomposition $p=p_{2}p_{1}$ such that the end-
point $d$ of $p_{1}$ lies on $\rho$. For $d\neq b$ we have   $ dim \,A (d,z) \,=1$ and we find a subpath $\rho'$ of $\rho$ such that $\overline{p_{2}} \sim \overline{\beta\rho'}$.
Then $\beta\rho'p_{1}$ is a long path ending with $\beta$ but not starting with $\alpha$. This contradicts the dual of   part i). For $d=b$ we have 
$\overline{p_{1}} \sim \overline{\rho\alpha}$ because $\alpha$ is an arrow and then $p_{2}\rho\alpha$ is a long path starting with $\alpha$ and therefore ending with $\beta$. 
Thus $p$ ends with $\beta$ and we obtain 
the contradiction 
that $p$ starts with $\alpha$ again by the dual of part i).

\end{proof}

\begin{lemma}
 Let $\alpha':a \rightarrow b'$ be an arrow such that the interior points of all long paths starting with $\alpha'$ are thin. Then there is only one such path.
\end{lemma}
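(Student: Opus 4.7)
The plan is to assume for contradiction that there are two distinct long paths $p \ne q$ both starting with $\alpha'$, and to construct a representation-infinite convex subcategory inside the Galois cover of a suitable zero-relation proper quotient of a five-vertex full subcategory of $A$, contradicting the minimality of $A$. First I factor $p = p'\alpha'$ and $q = q'\alpha'$ with $p',q' : b' \to z$. Since $(b',z)\ne(a,z)$ the bimodule $A(b',z)$ is uniserial by Proposition 2(ii); as $b'$ and $z$ are both thin with $e_{b'}Ae_{b'}=e_{z}Ae_{z}=k$, Proposition 1(vi) forces $\dim A(b',z)\le 1$. Both $\overline{p'}$ and $\overline{q'}$ being non-zero gives $\overline{p'}=\lambda\overline{q'}$ and hence $\overline{p}=\lambda\overline{q}$ in $S$ for some $\lambda\ne 0$. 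Let $c$ be the last common vertex of $p$ and $q$ read from $a$, after which they branch through distinct arrows $\eta:c\to d$ on $p$ and $\zeta:c\to e$ on $q$. If $d=e$, then $\eta$ and $\zeta$ are parallel arrows giving linearly independent classes in $N/N^{2}$, so $\dim A(c,d)\ge 2$; but $c\ne a$ forces $(c,d)\ne(a,z)$, and both endpoints are thin, giving $\dim A(c,d)\le 1$---contradiction. Hence $d\ne e$.

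I then consider the full subcategory $A'$ of $A$ supported on the five vertices $\{a,c,d,e,z\}$. Writing $\tilde\alpha:a\to c$, $\tilde\delta:d\to z$ and $\tilde\epsilon:e\to z$ for generators of the one-dimensional bimodules $A(a,c)$, $A(d,z)$ and $A(e,z)$ (so $\tilde\alpha$ is the composite of $\alpha'$ with the common prefix $u:b'\to c$ of $p,q$ after $\alpha'$), the quiver $Q'$ of $A'$ has the five arrows $\tilde\alpha,\eta,\zeta,\tilde\delta,\tilde\epsilon$ plus one additional arrow $\alpha'':a\to z$ representing the second basis vector of the two-dimensional socle $S=A'(a,z)$, since $N^{2}_{A'}(a,z)=k\overline{p}$ is only one-dimensional. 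The only non-trivial relation of $A'$ is the commutativity $\tilde\delta\eta-\lambda\tilde\epsilon\zeta$, forced by $\dim A(c,z)\le 1$. Because $\overline{\tilde\delta\eta}\ne 0$ in $A'$, the quotient $A'':=A'/\langle\tilde\delta\eta\rangle$ is a proper quotient in which both $\tilde\delta\eta$ and $\tilde\epsilon\zeta$ vanish; thus $A''$ is a monomial (zero-relation) algebra, and by the general principle highlighted in \S 2.4, $A''$ must be representation-finite.

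I then examine the Galois cover of $A''$. The underlying graph of $Q'$ has $E-V+1=2$, so the fundamental group is free of rank~$2$ and the cover is an infinite tree algebra with the lifted zero relations. Inside this cover I exhibit a convex subcategory of extended Dynkin type $\tilde D_{6}$ that inherits no zero relation: fix a lift $\tilde c$ of $c$, follow the incoming arrow $\tilde\alpha$ from a lift $\tilde a$ of $a$, and then follow $\alpha''$ from $\tilde a$ to a lift $\tilde z$ of $z$. Because the loop $(\alpha'')^{-1}\tilde\delta\eta\tilde\alpha$ is a non-trivial generator of $\pi_{1}(Q')$, this $\tilde z$ is distinct from the lifts of $z$ reached from $\tilde c$ via $\tilde\delta\eta$ or $\tilde\epsilon\zeta$. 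The subtree on the seven vertices $\tilde c,\tilde a,\tilde z$, together with two leaves at $\tilde c$ (reached via $\eta,\zeta$) and two leaves at $\tilde z$ (reached by the other lifts of $\tilde\delta,\tilde\epsilon$), has the shape of $\tilde D_{6}$. The zero relations $\tilde\delta\eta$ and $\tilde\epsilon\zeta$ lift only to compositions whose other endpoints lie outside this subtree, so the convex subcategory is the pure path algebra of $\tilde D_{6}$, which is representation-infinite (tame concealed), contradicting representation-finiteness of $A''$.

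The main obstacle is the last step: constructing the $\tilde D_{6}$-shaped convex subtree that inherits no zero relation. The essential idea is to route through the extra arrow $\alpha''$, which exists precisely because $S$ is two-dimensional and offers a second path from a lift of $a$ to a lift of $z$ disjoint from the two broken compositions $\tilde\delta\eta$ and $\tilde\epsilon\zeta$. A minor bookkeeping obstacle is verifying, depending on whether $c=b'$ or $c\ne b'$, that $A'$ really has the quiver $Q'$ with the arrows and relation described above; in particular, that the composite $\tilde\alpha=u\alpha'$ is genuinely an arrow of $Q'$ rather than a product factoring through one of the chosen five vertices, which follows from the thinness of all involved morphism spaces. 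Modulo this verification, the argument follows exactly the pattern of the Galois-cover contradictions in Lemmas~1 and~3.
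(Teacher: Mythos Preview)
Your overall strategy is sound in spirit, but there is a genuine gap: you never verify that $A(d,e)=0$ and $A(e,d)=0$. Your description of the quiver $Q'$ of the five-point subcategory $A'$ on $\{a,c,d,e,z\}$ tacitly assumes that there is no arrow between $d$ and $e$, and that the \emph{only} relation is the commutativity $\tilde\delta\eta-\lambda\tilde\epsilon\zeta$. If, say, $A(d,e)\neq 0$, then $Q'$ acquires an extra arrow $d\to e$ (it cannot factor through $a$, $c$, or $z$ by the source/sink conditions and by $A(d,c)=0$), the relation structure changes (for instance $\tilde\epsilon$ composed with this new arrow must be proportional to $\tilde\delta$), and your $\tilde D_{6}$ picture in the cover collapses. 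This is precisely the point where the real work lies, and it is not ``minor bookkeeping'': one needs the cancellation argument---a nonzero $\delta:d\to e$ sits on a long path $p_{2}\delta\zeta_{j}\cdots\zeta_{1}$, and since $A(a,e)$ is one-dimensional one cancels to get $\overline{\delta\zeta_{j}}\sim\overline{\xi_{j}}$, contradicting that $\xi_{j}$ is an arrow. You also do not discuss the degenerate possibilities $d=z$ or $e=z$, which would further alter $Q'$.

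For comparison, the paper's proof is shorter and avoids the Galois cover entirely. After establishing $A(d,e)=A(e,d)=0$ via the cancellation above, it passes to the \emph{four}-point full subcategory on $\{a,d,e,z\}$ (dropping $c$). Its quiver has arrows $a\to d$, $a\to e$, $d\to z$, $e\to z$, and---because $\overline p\sim\overline q$ means the two length-two compositions span only a line in the two-dimensional $S$---an additional arrow $a\to z$. Then the \emph{separated quiver} of this four-point algebra already contains a $\tilde D_{5}$, giving the contradiction in one line. So even after you fill the gap, the paper's route is more economical: four points and a separated-quiver count instead of five points, a quotient, and a cover computation.
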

\begin{proof}
 Let $p=\zeta_{n}\ldots \zeta_{2}\zeta_{1}$ and $q=\xi_{m}\ldots \xi_{2}\xi_{1}$ be two different long paths with $\zeta_{1}=\xi_{1}=\alpha'$. 
 Because of $dim \,A(b',z)=1$ we have $\overline{p} \sim \overline{q}$. By symmetry we can assume that $n\geq m$. Then $\xi_{j}=\zeta_{j}$ for all $1 \leq j \leq m$ implies $n=m$ and $p=q$ because $z$ is a sink and so $\zeta_{m+1}$ cannot exist.
So let $j>1$ be the smallest index with $\zeta_{j}:c \rightarrow d \neq \xi_{j}:c \rightarrow e$. Then we have $A(d,e)=0$. Namely a non-zero path $\delta:d \rightarrow e$ would occur in
 a long path $p_{2}\delta\zeta_{j}\zeta_{j-1}\ldots \zeta_{1}$ and so by cancellation $\overline{\delta\zeta_{j}} \sim \overline{\xi_{j}}$ which is a contradiction. Symmetrically we have $A(e,d)=0$. Now we consider
the  full subcategory $A'$ supported by $a,z,d,e$ and its quiver $Q'$. It has arrows $a \rightarrow d$, $a \rightarrow e$, $d \rightarrow z$ and $e \rightarrow z$.
 Because of $dim \,A(a,z)=2$ and $\overline{p} \sim \overline{q}$ we 
have also
 an arrow $a \rightarrow z$. Then the separated quiver of $A'$ contains a quiver of type $\tilde{D}_{5}$.
\end{proof}

\subsection{Suspicious algebras of type 2}

\begin{proposition}
 The suspicious algebras of type 2 are exactly the algebras listed in the first four families.
 
\end{proposition}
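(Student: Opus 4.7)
The plan is to split the analysis by whether $Q$ contains a thick point, which by Lemma 2(ii) occurs at most once. The easy direction---that each of $A(p,q),B(p,q),C(p),D(p,q)$ is suspicious of type 2---follows from classical facts about tame canonical algebras and tubular degenerations, together with the immediate check that $eAe$ (for $e=e_{a}+e_{z}$) is the Kronecker algebra. The work is the converse, to which I devote two cases.

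\emph{No thick point.} Then every $A(x,x)=k$, so $Q$ is acyclic and every $A(x,y)\neq A(a,z)$ is at most one-dimensional. Every long path runs from $a$ to $z$; by Lemma 4, each arrow out of $a$ initiates a unique long path $p_{i}$, and dually for arrows into $z$. Letting $k$ denote the number of long paths, the images $\overline{p_{i}}$ span the $2$-dimensional socle $S$, so $k\geq 2$. I would next show that two distinct long paths cannot share any interior vertex $c$: the one-dimensionality of $A(a,c)$ and $A(c,z)$ forces the corresponding images in $S$ to be proportional, which for $k=2$ is a direct contradiction and for $k\geq 3$ can be turned into a contradiction by the covering-tree trick of Section 2.4 (reduce to a zero-relation subquotient supported on $\{a,c,z\}$ and exhibit a forbidden $\tilde{A}_{n}$ or $\tilde{D}_{n}$ in the universal cover). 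Hence the long paths are vertex-disjoint except at $a,z$. For $k=2$ this immediately yields $A(p,q)$. For $k=3$ the unique linear relation in $S$ can be rescaled to $\overline{p_{1}+p_{2}+p_{3}}=0$, producing a canonical algebra; minimality plus the classification of tame canonical algebras (type $\tilde{D}_{n}$ and $\tilde{E}_{6,7,8}$) then matches exactly the parameter ranges $p\geq q=1$ and $4\geq p\geq q=2$ of $B(p,q)$. For $k\geq 4$ I would kill one arm to obtain a rep-infinite three-arm canonical proper quotient, violating minimality.

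\emph{One thick point $b$.} By Lemma 2 the quiver contains $\alpha\colon a\to b$, $\beta\colon b\to z$, and an oriented cycle $\rho=\rho_{m}\cdots\rho_{1}$ at $b$, with $\overline{\beta\alpha},\overline{\beta\rho\alpha}$ a basis of $S$; set $p=m$. Lemma 3 tells us the long paths through $\alpha$ are exactly $\beta\alpha$ and $\beta\rho\alpha$, and any other long path is vertex-disjoint from $\beta\rho\alpha$'s interior. If no further long path exists, $Q$ is precisely the cycle with $\alpha,\beta$ attached, with relations $\rho_{1}\rho_{p}=0$ (equivalently $\rho^{2}=0$), giving $C(p)$. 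If an additional long path $q\colon a\to z$ exists with thin interior points, Lemma 4 (applied to its first arrow) makes it unique. Its image in $S$ decomposes as $\lambda\overline{\beta\alpha}+\mu\overline{\beta\rho\alpha}$; after a change of presentation I would normalize to $\overline{q}=\overline{\beta\alpha}$, producing the commutativity relation of $D(p,q)$. Multiple extra arms and alternative normalizations are then excluded by the same covering-tree technique used in Lemmas 1--4: pass to a small zero-relation subquotient and exhibit a rep-infinite extended Dynkin quiver in the universal cover annihilated by the constraining relations.

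The main obstacle will be the quantitative content of the two borderline subcases---justifying that the $k=3$ canonical analysis returns exactly the parameter list of $B(p,q)$, and that the extra arm in the thick-point case admits precisely the commutativity normalization of $D(p,q)$ and no exotic alternative. Both rest on the same reduction strategy deployed throughout Section 3: replace the suspicious subquotient by a zero-relation quotient supported on a small full subcategory, lift to the universal cover, and locate a forbidden extended Dynkin subquiver.
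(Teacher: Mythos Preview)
Your outline matches the paper's proof almost exactly: the same case split on the existence of a thick point, the same invocation of Lemmas~2--4 and their duals, the same reduction to small zero-relation quotients and universal covers. Two points deserve sharpening.

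First, in the thick-point case with an extra arm $q$, your normalization $\overline{q}=\overline{\beta\alpha}$ presupposes $\lambda\neq 0$. When $\lambda=0$ (so $\overline{q}\sim\overline{\beta\rho\alpha}$) no change of presentation will produce the $D(p,q)$ commutativity relation; this case must be \emph{excluded}, not normalized away. The paper does this concretely: on the full subcategory supported by $a,b,b',z$ one divides out $\overline{q}$ to obtain a zero-relation algebra whose universal cover contains a $\tilde{E}_{6}$ quiver algebra, contradicting minimality. Your phrase ``alternative normalizations are then excluded by the same covering-tree technique'' hides exactly this step and the specific obstruction.

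Second, for the forward direction the first two families are indeed tame concealed, but $C(p)$ and $D(p,q)$ are not, and ``tubular degenerations'' does not settle them. The paper checks minimality for these by hand: any one-dimensional ideal is generated by some $x_{0}\overline{\beta\alpha}+x_{1}\overline{\beta\rho\alpha}$, and after a presentation change only the values $0,1$ for the $x_{i}$ matter, giving two (resp.\ three) quotients that are verified representation-finite by the finiteness criterion. You should supply this argument rather than invoke unspecified classical facts.
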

\begin{proof} Of course all the algebras in the four families are not distributive. The algebras in the first two families are tame concealed, whence in particular minimal 
representation-infinite by proposition 6 in section 6.2.

For an algebra in one of the families 3 or 4 one has to look at quotients by a one-dimensional ideal generated by $\overline{x_{0}\beta\alpha + x_{1}\beta\rho\alpha}$. In fact, by changing
 the presentation slightly
only the values $1$
or $0$ have to be considered for $x_{0}$ and $x_{1}$. One obtains two non-isomorphic quotients for $C(p)$ and three for $D(p,q)$ which are all representation-finite by the finiteness-criterion
( see \cite[theorem 27]{BSURVEY} or section 8 ).

Reversely, let $A$ be a suspicious algebra of type 2. Observe that all points occur in a long path. Assume first that there is a thick point $b$. If there is only 
one arrow $\alpha:a \rightarrow b$ starting at $a$ we obtain an algebra of
 the family $C(p)$ 
by lemma  3. Thus let $\alpha':a \rightarrow b'$ be a second arrow where $b'\neq z$ by lemma 1. For the uniquely determined long path $p$ starting wih $\alpha'$ we have 
$\overline{p}= x_{0}\overline{\beta\alpha}+
 x_{1} \overline{\beta\rho\alpha}$. For $x_{0}\neq 0$ we change the presentation to obtain an algebra of type $D(p,q)$. For $x_{0}=0$ we look at the full subcategory $A'$ 
supported by $a,b,b',z$ and
we get an algebra  defined by  the relation $p = \beta\rho\alpha$.
Dividing out by $\overline{p}$ one obtains a zero-relation algebra containing a quiver-algebra of type $\tilde{E}_{6}$ in its universal cover. Thus $A$ is not minimal representation-infinite.
Finally there cannot be a third arrow $\alpha'':a \rightarrow b''$. Namely the full subcategory $A'$ supported by $a,b,b',b''$ is then already representation-infinite because $\tilde{A'}$ 
contains a quiver of type $\tilde{D}_{7}$.

So we can assume that there is no thick point. By lemma 4 each arrow $\alpha_{i}:a \rightarrow b_{i}$ starting at $a$ can be prolongated to a uniquely determined long path $p_{i}$ and these 
paths have no interior points in common  by the dual of lemma 4 and because all points are thin. If only two arrows start at $a$ then $\overline{p_{1}}$ and $\overline{p_{2}}$ are a basis of $S$ and $a$ is isomorphic to
an $A(p,q)$. So assume there are three arrows starting at $a$. If $\overline{p_{1}} \sim \overline{p_{2}}$ we look at the full subcategory $A'$ supported by $a,b_{1},b_{2},z$. Dividing out by 
$\overline{p_{1}}$ we obtain a zero-relation-algebra containing a quiver of type $\tilde{D}_{5}$ in its universal cover. By symmetry we can assume that for $i\neq j$ the vectors
$\overline{p_{i}}$ and $\overline{p_{j}}$ are linearly independent. Because of $dim\,S=2$  we have a relation 
$x_{1} \overline{p_{1}}+x_{2} \overline{p_{2}}+x_{3 }\overline{p_{3}}=0$
with $x_{i}\neq 0$ for all $i$.
Changing the presentation slightly we find that $A$ belongs to the family $B(p,q)$. The conditions on $p$ and $q$ follow from the fact that the full subcategory supported by all points
 except $a$ is representation-finite.

The case where more than three arrows start at $a$ is excluded because $A$ is minimal representation-infinite.

\end{proof}

\section{Algebras of type 3}

\subsection{Each point divides exactly one of the morphisms r, s or t}

Now we study suspicious algebras of type 3. We fix morphisms $s,r,t$ generating $rad \,A(a,a)$, $rad\,A(z,z)$ and $A(a,z)$ as bimodules.
A point $x$ divides a non-zero morphism $f$ if $x$ is an interior pont of a path $p$ with $\overline{p}\sim f$. 
We consider full subcategories $A'$ containing $a,z$ and a third point
$b$ that varies. The quiver of $A'$ is then denoted by $Q'$ and the  possible arrows by $\alpha_{1}:a \rightarrow b$,$\alpha_{2}:b \rightarrow a$, 
$\gamma_{1}:b \rightarrow z$, $\gamma_{2}:z \rightarrow b$, $\sigma: a \rightarrow a$, $\mu:b \rightarrow b$, $\rho:z \rightarrow z$ and $\beta:a \rightarrow z$. The situation is illustrated
 in figure 4.

\setlength{\unitlength}{0.7cm}
\begin{picture}(15,7)

\put(5.7,5.5){figure 4}
\put(6,4){\circle*{0.1}}
\put(2,0){\circle*{0.1}}
\put(10,0){\circle*{0.1}}

\put(2.2,0){\vector(1,0){7.6}}
\put(2.1,0.3){\vector(1,1){3.6}}
\put(5.9,3.9){\vector(-1,-1){3.6}}
\put(6.3,3.9){\vector(1,-1){3.6}}
\put(9.9,0.1){\vector(-1,+1){3.6}}
\put(6,4.5){\circle{1.0}}
\put(10.4,-0.4){\circle{1.0}}
\put(2,-0.05){\vector(-1,1){0.01}}\put(10,-0.03){\vector(-1,-1){0.01}}
\put(6,4){\vector(1,0){0.01}}
\put(1.6,-0.4){\circle{1.0}}

\put(3.5,2.5){$\alpha_{1}$}
\put(4.5,2){$\alpha_{2}$}
\put(7,2){$\gamma_{2}$}
\put(8.5,2.5){$\gamma_{1}$}
\put(6,0.3){$\beta$}
\put(1.5,-0.5){$\sigma$}
\put(5.9,4.4){$\tau$}
\put(10.3,-0.5){$\rho$}
\put(2,1){$a$}
\put(6,3){$b$}
\put(10,1){$z$}

\end{picture}
\vspace{0.8cm}

The first lemma restricts the shapes of the possible quivers $Q'$.

\begin{lemma}
 Using the above notation we have:
\begin{enumerate}
 \item $Q'$ cannot contain the quiver $Q_{1}$ consisting of the  four arrows $\alpha_{1},\alpha_{2},\gamma_{1}, \gamma_{2}$.
\item $Q'$ cannot contain the quiver $Q_{2}$ consisting of $\alpha_{1}$,$\alpha_{2}$,$\gamma_{1}$ and $\rho$.
\item $Q'$ cannot contain $Q_{3}$ given by the arrows $\sigma$,$\beta$ $\rho$,$\alpha_{1}$ and $\gamma_{1}$.
\end{enumerate}

\end{lemma}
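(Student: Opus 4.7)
My plan is to exclude each forbidden subquiver $Q_i$ by exhibiting a proper representation-infinite quotient of $A'$, which by the extension-by-zero principle sketched in Section~2.4 forces a proper representation-infinite quotient of $A$ and contradicts minimality. I take the radical-squared-zero quotient $A'' := A'/\mathrm{rad}^2 A'$; its quiver is $Q'$, and by Gabriel's theorem applied to the separated quiver $S(Q')$ (vertices $\{v^\pm : v \in Q'_0\}$, with an edge joining $v^-$ and $w^+$ for each arrow $v\to w$ of $Q'$), it suffices to exhibit an extended Dynkin subquiver inside $S(Q')$.

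Two features of type~3 are needed. Since $\dim e_aAe_a = \dim e_zAe_z = 2$ and $A(a,z)/\mathrm{rad}^2 A(a,z)$ is one-dimensional, the quiver $Q'$ always contains the loop $\sigma$ at $a$, the loop $\rho$ at $z$, and the arrow $\beta \colon a \to z$, in addition to the arrows listed in each $Q_i$. Moreover, $\mathrm{rad}^2 A'$ is nonzero: it already contains the socle generators $\overline{\beta\sigma}$ and $\overline{\rho\beta}$. Hence $A''$ is a proper quotient of $A'$, and the two-sided $A$-ideal generated by $\mathrm{rad}^2 A'$ is nonzero, so the extension argument produces a proper quotient of $A$.

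For \emph{Case 1}, the arrows $\sigma, \alpha_2, \gamma_1, \rho, \gamma_2, \alpha_1$ of $Q'$ give six edges of $S(Q')$ that close to a $6$-cycle through $a^-, a^+, b^-, z^+, z^-, b^+$ and back to $a^-$, i.e., a subquiver of type $\tilde{A}_5$. For \emph{Case 2}, the arrows $\sigma, \alpha_2, \gamma_1, \beta$ close to a $4$-cycle on $a^-, a^+, b^-, z^+$, giving $\tilde{A}_3$. For \emph{Case 3}, the five arrows $\sigma, \alpha_1, \beta, \gamma_1, \rho$ yield a tree with central edge between $a^-$ and $z^+$, two leaves $a^+, b^+$ attached to $a^-$, and two leaves $b^-, z^-$ attached to $z^+$, which is $\tilde{D}_5$. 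In every case $S(Q')$ contains an extended Dynkin subquiver, so $A''$ is representation-infinite.

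The only real obstacle is the book-keeping: I must verify that each hypothesized arrow of $Q_i$ contributes a distinct edge of $S(Q')$, but this is automatic since arrows of $Q'$ correspond to linearly independent elements of $\mathrm{rad}\,A'/\mathrm{rad}^2 A'$. Any additional arrows that $Q'$ may carry only add edges to $S(Q')$ and cannot destroy the extended Dynkin subquiver just exhibited, so the argument does not need to split into further subcases.
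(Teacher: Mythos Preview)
Your argument for part~iii) is fine and in fact coincides with the paper's one-line proof: the five arrows of $Q_{3}$ alone produce a $\tilde D_{5}$ in the separated quiver of $A'/\mathrm{rad}^{2}A'$.

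The genuine gap is in parts~i) and~ii). Your claim that $\sigma,\rho,\beta$ are \emph{always} arrows of $Q'$ is false. The quiver $Q'$ is that of the full subcategory $A'=eAe$, so a loop $\sigma$ at $a$ exists only when $\mathrm{rad}\,e_{a}Ae_{a}$ is \emph{not} contained in $\mathrm{rad}^{2}A'$. But as soon as $\alpha_{2}:b\to a$ is present (as it is in $Q_{1}$ and $Q_{2}$), the arrow $\alpha_{2}$ occurs in a long path $q\alpha_{2}p$, and $\overline{\alpha_{2}p}$ already generates the one-dimensional radical of $e_{a}Ae_{a}$; hence $s\in\mathrm{rad}^{2}A'$ and there is \emph{no} loop $\sigma$. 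The paper makes exactly this observation at the beginning of its proof of part~i), and the dual remark kills $\rho$ when $\gamma_{1},\gamma_{2}$ are both present. Likewise $\beta$ need not be an arrow of $Q'$ if $t$ factors through $b$.

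Once $\sigma$ and $\rho$ are removed, your separated-quiver computation collapses: for $Q_{1}$ (with or without $\beta$) one obtains $A_{3}\sqcup A_{3}$ or $A_{6}$; for $Q_{2}$ with at most one of $\beta,\tau$ one gets $A_{2}\sqcup A_{4}$ or $E_{6}$. All of these are Dynkin, so $A'/\mathrm{rad}^{2}A'$ is representation-finite and no contradiction results. The paper's proof of i) and ii) is accordingly much more delicate: it splits into subcases according to which of $\beta,\tau$ survive, and in each subcase exploits the three-dimensionality of $A(a,z)$ together with $ts\neq 0\neq rt$ and $s^{2}=r^{2}=rts=0$ to derive an algebraic contradiction (or, in one subcase, passes to a Galois cover). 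The radical-square-zero quotient alone is not strong enough here.
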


\begin{proof} Suppose $Q'$ contains $Q_{1}$. The arrow $\alpha_{2}$ is part of a long path $q\alpha_{2}p$ and so $\overline{\alpha_{2}p}$ generates 
the one-dimensional radical of $A(a,a)$ and there is no loop at $a$. Dually there is no loop at $z$.  There is also no loop at $b$ because
 otherwise the separated quiver to $A'$ contains a quiver of type $\tilde{D}_{5}$.  Thus either $Q'$ coincides with $Q_{1}$ or one has to add $\beta$. In both cases we have
 $s\sim \overline{\alpha_{2}\alpha_{1}}$ and $r\sim \overline{\gamma_{2}\gamma_{1}}$.

First we treat the case
 without $\beta$. From $0\neq ts$ we see that $\alpha_{1}\alpha_{2}\alpha_{1}$ is not a zero-path so that $A(a,b)$ is cyclic over $A(b,b)$. Dually we get that $A(b,z)$ is cyclic
 over $A(b,b)$ which is a uniserial ring whose radical is generated by $X:=\overline{\alpha_{1}\alpha_{2}}$ or by $Y=\overline{\gamma_{2}\gamma_{1}}$. Up to duality we can assume that $X$ 
is a generator. From $X\overline{\alpha_{1}} \sim \overline{\alpha_{1}}s $ we get $0=\overline{\alpha_{1}}s^{2}=X^{2}\overline{\alpha_{1}}$ and it follows that $A(a,z)$
 is generated as a vector space by $\overline{\gamma_{1}}\overline{\alpha_{1}}$ and by $\overline{\gamma_{1}}X \overline{\alpha_{1}}$ in contradiction to $ dim\,A(a,z)=3$.

Thus let $\beta$ belong to $Q'$. Then $\overline{\gamma_{1}\alpha_{1}}$ lies in $S$ and so it is annihilated on both sides by all elements in $N$. Furthermore $0\neq rt$ shows that 
$\gamma_{1}\gamma_{2}\beta$ is a non-zero path. But $\overline{\gamma_{2}\beta}$ belongs to $rad \,A(b,a)$ and so it is $\overline{\alpha_{1}}f$ or $g\overline{\alpha_{1}}$ for some elements
$f,g$ in $N$. In the first case the contradiction $0=rt$ is immediate and also for $g \sim \overline{\alpha_{1}\alpha_{2}}$. The only remaining case is 
$g \sim \overline{(\gamma_{2}\gamma_{1})^{i}\alpha_{1}}$ for
 some $i\geq 0$ and again $0=rt$ follows.

Next assume that $Q'$ contains $Q_{2}$ but not $Q_{1}$. Then $\gamma_{2}$ and $\sigma$ do not exist. If $\beta$ belongs to $Q'$ then $\tau$ does not as the separated quiver shows. 
Then $\overline{\gamma_{1}\alpha_{1}}$ lies in $S$ and 
$\overline{\beta \alpha_{2}}$ in the radical of $A(b,z)$ and so it is proportional to $\overline{\rho\gamma_{1}}$ or to $\overline{\gamma_{1}(\alpha_{1}\alpha_{2})^{i}}$ for some $i\geq 1$.
In both cases the contradiction $0 \neq ts \sim \overline{\beta\alpha_{2}\alpha_{1}} \sim 0$ follows. So we have $Q'=Q_{2}$ or one has to add  $\tau$. 

We treat the case witout $\tau$ first. We have 
$t \sim \overline{\gamma_{1}(\alpha_{1}\alpha_{2})^{i}\alpha_{1}}$ for some $i\geq 0$. From $ts\neq 0 = s^{2}$ we get $i=0$ and $ts \sim \overline{\gamma_{1}\alpha_{1}\alpha_{2}\alpha_{1}}$.
On the other hand $rt \sim \overline{\rho\gamma_{1}\alpha_{1}} \sim \overline{\gamma_{1} (\alpha_{1}\alpha_{2})^{i}\alpha_{1}}$ for some $i\geq 1$  implies $rt \sim ts$ or $rt=0$. 
Both cases are a contradiction. 

So assume finally that $\tau$ exists in $Q'$. Let $n$ be the largest natural number such that $\tau^{n}$ is not a zero-path. Then also $\gamma_{1}(\tau)^{n}\alpha_{1}$ is not a zero path. 
For $n\geq 2$ the space $A(a,b)$ is only transit and $A(b,z)$ only cotransit. We look at the full subcategory $A''$ supported by $b$ and $z$. If $n\geq 3$ we divide it by the relations
$\gamma_{1}\tau^{2}$ and by $\rho\gamma_{1}$. The remaining zero-relation algebra has an obvious Galois-covering containing a quiver of type $\tilde{E}_{6}$. Thus we get $\tau^{3}=0$.
From $ts\neq 0$ we obtain that $\alpha_{1}\alpha_{2}$ is a non-zero path and therefore proportional to $\tau^{2}$. It follows the contradiction $ts \sim rt$.

The last part is trivially excluded because the separated quiver contains a quiver of type $\tilde{D}_{5}$.

\end{proof}

\begin{lemma}
 Any point $b$ different from $a$ and $z$ divides exactly one of the morphisms $r,s, t$.
\end{lemma}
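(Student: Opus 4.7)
The plan is to analyze the quiver $Q'$ of the full subcategory $A'$ supported on $\{a, b, z\}$ and to invoke lemma 5 in each case. A crucial preliminary observation is that, by the description of $eAe$ in proposition 2, the quiver of $eAe$ has no arrow $z \to a$, hence $A(z,a) = 0$. Consequently
\[ \mathrm{rad}^{2}A'(a,a) = A(b,a) \cdot A(a,b), \qquad \mathrm{rad}^{2}A'(z,z) = A(b,z) \cdot A(z,b), \]
and a direct calculation using the defining relations $\alpha^{2} = \gamma^{2} = \gamma\beta\alpha = 0$ of $eAe$ gives $\mathrm{rad}^{2}A'(a,z) = S + A(b,z) \cdot A(a,b)$. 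Hence the loop $\sigma$, the loop $\rho$ and the arrow $\beta$ are present in $Q'$ precisely when $b$ fails to divide $s$, $r$ and $t$ respectively.

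For uniqueness I would suppose that $b$ divides two of $r, s, t$ and treat the three possible pairs separately. If $b$ divides both $s$ and $r$, then the four off-diagonal spaces $A(a,b), A(b,a), A(b,z), A(z,b)$ are all non-zero; a bimodule Nakayama argument combined with the uniseriality of these spaces from proposition 1(vi) puts the four arrows $\alpha_{1}, \alpha_{2}, \gamma_{1}, \gamma_{2}$ into $Q'$, contradicting lemma 5(i). If $b$ divides $s$ and $t$, the same Nakayama step yields the arrows $\alpha_{1}, \alpha_{2}, \gamma_{1}$, while the failure of $b$ to divide $r$ produces the loop $\rho$ by the preliminary remark, so $Q' \supseteq Q_{2}$, contradicting lemma 5(ii). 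The pair $\{r, t\}$ is handled by the obvious dual argument.

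For existence, suppose for contradiction that $b$ divides none of $r, s, t$. The preliminary remark places $\sigma, \rho$ and $\beta$ into $Q'$. Prolongation gives $A(a,b) \neq 0 \neq A(b,z)$, and a further application of bimodule Nakayama, again using $A(z,a) = 0$ to control the additional terms of $\mathrm{rad}^{2}A'$, produces the arrows $\alpha_{1} : a \to b$ and $\gamma_{1} : b \to z$. Thus $Q' \supseteq Q_{3}$, contradicting lemma 5(iii).

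The main technical obstacle is showing that a non-zero off-diagonal morphism space is not entirely absorbed into $\mathrm{rad}^{2}A'$ through compositions via the third point; the troublesome extra terms $A(z,b) \cdot A(a,z)$ and $A(a,z) \cdot A(b,a)$ must be handled by combining two-sided Nakayama for bimodules over the local algebras $A(x,x)$ with the identity $A(z,a) = 0$ and the specific structure of $S$ inside $A(a,z)$.
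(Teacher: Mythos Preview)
Your proposal is correct and follows the same strategy as the paper: both examine the quiver $Q'$ of the full subcategory on $\{a,b,z\}$ and, in each case, exhibit one of the forbidden configurations $Q_{1},Q_{2},Q_{3}$ of Lemma~5. Your preliminary equivalence ``$\sigma,\rho,\beta$ lie in $Q'$ precisely when $b$ fails to divide $s,r,t$'' is a cleaner packaging of what the paper does by ad~hoc inspection; the paper never states it as such, but uses each instance where needed. The cross-term obstacle you flag at the end --- for example, showing that $A(b,z)$ is not swallowed by $A(a,z)\cdot A(b,a)$ in the $(s,t)$ case so that $\gamma_{1}$ survives --- is exactly the point the paper handles by its short contrapositive: if $\gamma_{1}$ were absent then $\beta$ would be present, forcing every path through $b$ to land in $S$ and contradicting that $b$ divides $t$. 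Your Nakayama route reaches the same conclusion (since $A(a,z)\cdot A(b,a)\cdot A(a,b)\subseteq A(a,z)\cdot s\subseteq S$), so the two arguments are essentially reformulations of one another.
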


\begin{proof} We always look at the full subcategory $A'$ supported by $a,b,z$ and its quiver $Q'$ and we show first that $b$ divides at most one of the morphisms.

 If $b$ divides $s$ there is a non-zero path $p$ in $Q'$ from $a$ to $a$ with $b$ 
as an interior point and $z$ is not an interior point because of $A(z,a)=0$. Thus $\alpha_{1}$ and $\alpha_{2}$ belong to $Q'$. Dually, if $b$ divides $r$, $\gamma_{1}$ and $\gamma_{2}$ 
belong to $Q'$. 
Thus part i) of the last lemma implies that $b$ cannot divide $s$ and $r$.

If $b$ divides $s$ and $t$ but not $r$ then  $\alpha_{1}$, $\alpha_{2}$ and $\rho$ exist, but not $\gamma_{2}$ which would produce a non-zero-path from $z$ to $z$ 
saying that $b$ divides $r$. If $\gamma_{1}$ does  not exist then $\beta$ does and  $t \sim \overline{\beta}$ is true. Then  any path $p$ from $a$ to $z$ with interior point $b$ satisfies 
$\overline{p} \in S$. Thus $\gamma_{1}$ exists and the contradiction $Q_{2} \subseteq Q'$ follows.  The case that $b$ divides $r$ and $t$ is excluded by duality.

Finally, if $b$ divides none of $s,r,t$ then $Q_{3}$ is contained in $Q'$. Namely, $\alpha_{2}$ does not exist because $b$ does not divide $s$ and so $\sigma$ exists. Dually $\gamma_{2}$ does
not exist but $\rho$ does. Since $t$ does not factor through  $b$ the arrow  $\beta$ exists. Finally, there is a non-zero path from $a$ to $z$ with $b$ as an interior point because the identity at $b$ is
a non-zero path and this enforces the two arrows $\alpha_{1}$ and $\gamma_{1}$.

\end{proof}
\subsection{The uniqueness of the two cycles and the bridge}

\begin{lemma}
 Suppose $b$ divides $s$. Choose a path $p=\delta_{m}\delta_{m-1}\ldots \delta_{1}$ with $\overline{p}=s$. We consider the full subcategory $A'$ supported by $a,b,z$ and its quiver $Q'$.
 Then the following holds:
\begin{enumerate}
\item $Q'$ contains only the arrows $\alpha_{1},\alpha_{2},\beta, \rho$ defined in figure 4.
 \item $b$ is a thin point and $\overline{\delta_{1}\delta_{m}}=0$.
\item $A(a,b), A(b,a)$ and $A(b,z)$ all have dimension one.
\item Any path $q$ in $Q$ with $\overline{q}=s$ coincides with $p$.
\end{enumerate}

\end{lemma}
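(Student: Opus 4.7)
The plan is to work inside the full subcategory $A'$ on $\{a,b,z\}$ and exploit the two main tools of the paper: the three forbidden shapes $Q_{1},Q_{2},Q_{3}$ of Lemma 5 and the exclusive-divisibility statement of Lemma 6. Because $(a,z)$ is the only critical pair and $A(z,a)=0$ for an algebra of type $3$, the geometry of $A'$ is already very restricted.

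For (i), I first verify that the four listed arrows are present. Since $A(z,a)=0$, the path $p$ with $\overline{p}=s$ cannot visit $z$, so splitting $p$ at its $b$-occurrences yields nontrivial classes in $N(A')/N(A')^{2}$ on the $(a,b)$ and $(b,a)$ components, placing $\alpha_{1},\alpha_{2}$ in $Q'$. The factorization $s\in e_{a}Ae_{b}\cdot e_{b}Ae_{a}\subseteq N(A')^{2}$ excludes the loop $\sigma$, while Lemma 6 together with $A(z,a)=0$ keeps $r,t$ out of $N(A')^{2}$, placing $\rho,\beta$ in $Q'$. For the exclusions: if $\gamma_{1}\in Q'$ then $\{\alpha_{1},\alpha_{2},\gamma_{1},\rho\}=Q_{2}$, forbidden by Lemma 5(ii); if $\gamma_{2}\in Q'$, a direct check shows the separated quiver of $A'/\mathrm{rad}^{2}A'$ contains the $\tilde{A}_{3}$-cycle on $\{a^{-},b^{+},z^{-},z^{+}\}$, so $A'$ is representation-infinite and minimality forces $A=A'$, whereupon $A/\mathrm{rad}^{2}A$ becomes a proper representation-infinite quotient (using $s\in\mathrm{rad}^{2}A$), contradiction. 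A loop $\tau$ at $b$ is excluded by a Galois-covering argument on the subcategory $\{a,b\}$: combining $s^{2}=0$ with the factorization $s\sim\overline{\alpha_{2}\tau^{k_{0}}\alpha_{1}}$ for minimal $k_{0}$ produces an extended Dynkin diagram in the universal cover, in the same spirit as Lemma 1.

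For (ii), the absence of $\tau$ gives $\mathrm{rad}\,A(b,b)\subseteq N(A')^{2}$; since $b$ does not divide $r$, no factorization passes through $z$, so every loop at $b$ has the form $\overline{\alpha_{1}}\mu\overline{\alpha_{2}}$ with $\mu\in A(a,a)$. Surrounding by $\overline{\alpha_{2}},\overline{\alpha_{1}}$ then lands in $\mathrm{rad}^{2}A(a,a)=0$, and the cancellation property collapses the loop to zero, so $b$ is thin; consequently $\overline{\delta_{1}\delta_{m}}\in\mathrm{rad}\,A(b,b)=0$. For (iii), each of $A(a,b),A(b,a),A(b,z)$ is uniserial (only $(a,z)$ is critical) and cyclic over the one-dimensional $A(b,b)$, hence of dimension at most one; the presence of the corresponding arrows of $Q'$ makes each exactly one-dimensional.

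For (iv), every interior point of any path $q$ with $\overline{q}=s$ must divide $s$, and the analysis of (i)--(iii) applies at each such point. Thinness forces $q$ to have length $2$, passing through a single intermediate point $b'$, with $\overline{q}=\overline{\delta'_{2}\delta'_{1}}$ where $\delta'_{1}\in A(a,b')$ and $\delta'_{2}\in A(b',a)$ span one-dimensional spaces. Since $\mathrm{rad}\,A(a,a)$ is itself one-dimensional, $\overline{p}$ and $\overline{q}$ cannot be linearly independent, so $b'=b$ and $q=p$. The principal obstacle is the exclusion of $\gamma_{2}$ and $\tau$ in part (i), as neither matches verbatim one of the three forbidden shapes of Lemma 5; each requires a fresh separated-quiver or covering computation along the lines of that lemma's proof.
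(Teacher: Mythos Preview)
Your argument for part (iv) has a fundamental misreading of the statement. The paths $p=\delta_{m}\cdots\delta_{1}$ and $q$ live in the \emph{original} quiver $Q$, not in $Q'$; nothing forces $m=2$. Thinness of the interior points gives $\dim A(b,b)=1$ for each such $b$, but a path in $Q$ from $a$ back to $a$ can still traverse many distinct thin points. The paper's proof is accordingly much more delicate: one takes the first index $i$ where $\delta_{i}\neq\delta'_{i}$, obtains two distinct intermediate points $b,b'$, shows $A(b,b')=A(b',b)=0$ using the one--dimensionality established in (iii), and then analyses the full subcategory on $\{a,b,b',z\}$, producing (after dividing by $I_{a}$) a zero--relation algebra whose universal cover contains a $\tilde{D}_{7}$.

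Several smaller points also need repair. For the exclusion of $\gamma_{2}$ in (i), your step ``minimality forces $A=A'$'' is invalid: $A'=eAe$ is a full subcategory, not a quotient, so minimal representation--infiniteness of $A$ says nothing directly. The paper's argument is a one--liner via Lemma~6: if $\gamma_{2}$ exists then $b$ divides $r$. Your exclusion of $\tau$ is too sketchy; the paper works instead with the subcategory on $\{b,z\}$ (with arrows $\tau,\rho$ and $\epsilon$ induced by $\overline{\beta\alpha_{2}}$), uses $\overline{\rho\epsilon}=0$ to find a $\tilde{E}_{6}$ in the cover when $\tau^{2}\neq 0$, and finishes the case $\tau^{2}=0$ with a special--biserial cyclic--word argument --- none of which is captured by an appeal ``in the spirit of Lemma~1''. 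In (iii), uniseriality of $A(a,b)$ only guarantees cyclicity over \emph{one} of $A(a,a)$ or $A(b,b)$; if it is cyclic over the two--dimensional $A(a,a)$ you must still rule out $\dim A(a,b)=2$, which the paper does using $\overline{\delta_{1}\delta_{m}}=0$. Finally, in (ii) your invocation of ``cancellation'' is misplaced (cancellation in the paper's sense requires the product to be nonzero); the correct tool is prolongation, exactly as the paper uses it.
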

\begin{proof}  The arrows $\alpha_{1}$,$\alpha_{2}$,$\rho$ and $\beta$ exist because $b$ divides $s$, but neither $r$ nor $t$. 
 An additional arrow
$\gamma_{1}$ is excluded as shown in the proof of part ii) of lemma  5  and an arrow $\gamma_{2}$ implies that $b$ divides $r$.
Suppose now that $Q'$ contains also $\tau$ and let $n$ be the largest natural number such that $\tau^{n}$ is not a zero-path. Then also $\beta\alpha_{2}\tau^{n}\alpha_{1}$ is not a zero path. 
 We look at the full subcategory $A''$ supported by $b$ and $z$ and its  quiver $Q''$ that  contains the two loops $\tau$ and $\rho$ and one arrow $\epsilon$ from $b$ to $z$ induced from
 $\overline{\beta\alpha_{2}}$. Because $\overline{\rho\beta}$   lies in $S$ we have  $\overline{\rho\epsilon} =0$.  For  $n\geq 2$ we find in the Galois-covering of $A''$  a quiver of 
type $\tilde{E}_{6}$. Now $A''$ is already a full subcategory of the proper quotient of $A'$ by $\overline{\beta\alpha_{2}\tau^{n}\alpha_{1}}$ and therefore $A''$ is representation-finite.
 Thus $\tau^{2}$ is a 
zero-path and so is $\alpha_{1}\alpha_{2}$. We can arrange by a slight change of the presentation that in addition  $\alpha_{2}\alpha_{1}$ is a zero path. Then 
$A'/\langle \beta\alpha_{2}\tau \rangle$ is a special biserial
 algebra containing  the cyclic word $\tau^{-1}\alpha_{1}(\rho\beta)^{-1}\alpha_{2}\beta$. Thus $A'$ is not minimal representation-infinite. This contradiction shows that $Q'$ has only 
four 
arrows.

If $b$ is not thin $\alpha_{1}\alpha_{2}$ is not a zero-path and so it can be prolongated to a non-zero path $\beta\alpha_{2}\alpha_{1}\alpha_{2}\alpha_{1}$ 
 contradicting $s^{2}=0$.
 If $\delta_{1}\delta_{m}$ is not a zero-path we can prolongate it to a non zero-path $\delta_{1}\delta_{m}p_{1}$ and 
so the
 end-point of $\delta_{1}$ is not thin.

 To see that $dim A(a,b)$, $dim A(b,a)$ and $ dim\,A(b,z)$ are $1$ we can assume that $b$ is an interior point of $p$.
For $dim \,A(a,b)\geq 2$ the space $A(a,b)$ is cyclic over $A(a,a)$ and so  $fs\neq 0$ where $f \sim q$ for the subpath $q$ of $p$ leading from $a$ to $b$ and where $s \sim p$. This 
contradicts the fact that  $\alpha_{1}\alpha_{m}$ is a zero-path. The proof for $A(b,a)$ is similar. Finally  $A(b,z)$ is 
generated by $\overline{\beta\alpha_{2}}$.

Part iv) is clear for $m=1$. So suppose $q=\delta'_{n}\delta'_{n-1}\ldots \delta'_{1}$ is another path with $\overline{q}\sim s$. Then we have also $n>1$ and because $a$ is not an interior 
point of $p$ or $q$ there is a smallest index $i$ with 
$\delta_{i}\neq \delta'_{i}$. Let $b\neq b'$ be the ending points of $\delta_{i}$ and $\delta'_{i}$ and let $c$ be the starting point. We decompose $p$ and $q$ as $p=p_{2}\delta_{i}p_{1}$ and 
$q=q_{2}\delta'_{i}p_{1}$. For $b=a$ we obtain $\delta_{i} \sim q_{2}\delta'_{i}$ which is impossible. The same is true for  $b'=a$. We claim that $A(b,b')=0$.
If not then  $A(b,b')=k$ 
because $A(b,b')$ is uniserial. Thus there
 is a path $q':b \rightarrow b'$ such that $\overline{q'}$ is a basis of $A(b,b')$ and this path can be prolongated to a path $q'p''$ from $a$ to $b'$ which is  is not a zero-path. Since 
$A(a,b')$ and $A(a,b)$ have dimension  we obtain that 
$q'p'' \sim q'\delta_{i}p_{1} \sim \delta'_{i}p_{1}$. Thus $q'\delta_{i}$ and $\delta'_{i}$ are two non-zero path between $c$ and $b'$ where $A(c,b')$ has dimension one for $c=a$ 
by 
part iii) 
 and 
also for $c$ dividing $s$ because $A(c,b)$ is uniserial. This implies that $\delta'_{i}$ is not an arrow, a contradiction. Thus we have $A(b,b')=0$ and symmetrically $A(b',b)=0$. 
Now consider  the full subcategory $A'$ supported by $a,b,b',z$ and its quiver $Q'$. Because $A(b,a)\neq 0$ but $A(b,b')=0$ and $A(z,b)=0$ there  is an arrow $b \rightarrow a$. Symmetrically there is an arrow
$b' \rightarrow a$. Similarly we have $A(a,b)\neq 0$ but $A(b',b)=A(z,b)=0$ leading to an arrow $a \rightarrow b$. Again by symmetry we also have an arrow $a \rightarrow b'$. Of course also
 $\beta:a \rightarrow z$ and $\rho:z \rightarrow z$ belong to $Q'$. Now we divide $A'$ by the non-trivial ideal $I_{a}$ to obtain a zero-relation algebra where the path $\rho\beta$ is not killed. In the 
universal cover we find a quiver of type$\tilde{D}_{7}$.

\end{proof}

\begin{lemma}
 Suppose $b$ divides $t$. Choose a path $p=\beta_{m}\beta_{m-1}\ldots \beta_{1}$ with $\overline{p}\sim t$. Then we have:
\begin{enumerate}
 \item $b$ is a thin point and $A(b,a)=A(z,b)=0$.
\item Any path $q$ with $\overline{q}\sim t$ coincides with $p$.
\end{enumerate}

\end{lemma}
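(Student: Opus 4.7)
The plan is to mirror the proof of Lemma 7, adapted to the case where $b$ divides the bridge $t$ instead of the loop $s$. First I consider the full subcategory $A'$ supported by $\{a,b,z\}$ with quiver $Q'$. Since $b$ divides $t$, the arrows $\alpha_{1}:a\to b$ and $\gamma_{1}:b\to z$ are forced in $Q'$ so as to realize $p$, while the loops $\sigma$ at $a$ and $\rho$ at $z$ come from the type 3 structure at $\{a,z\}$. The first step is to establish $A(b,a)=0=A(z,b)$: by the prolongation property any non-zero $f\in A(b,a)$ embeds in a non-zero element $gfh\in S$ with $h\in A(a,b)$ and $g\in A(a,z)$, so $fh$ is a non-zero morphism $a\to a$ admitting a path representative with $b$ as an interior point, i.e.\ $b$ divides $s$, contradicting Lemma 6. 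Dually $A(z,b)=0$. In particular the arrows $\alpha_{2}$ and $\gamma_{2}$ of figure 4 cannot occur in $Q'$.

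Next, to show $b$ is thin, it suffices (by the characterization of arrows via $N/N^{2}$) to exclude a loop $\tau:b\to b$ from $Q'$. Suppose $\tau$ exists and let $n$ be maximal with $\tau^{n}\neq 0$. I would restrict to the full subcategory $A''$ on $\{b,z\}$, whose quiver contains $\tau$, $\rho$ and a single arrow $\epsilon:b\to z$ induced from $\overline{\gamma_{1}}$ (since $A(b,z)$ is cyclic over $A(b,b)$), with relations $\tau^{n+1}=0$, $\rho^{2}=0$ and further zero-relations descending from the socle identities $rts=0$, $r^{2}=0$. For $n\geq 2$ a Galois-covering argument in the spirit of Lemma 7 part (i) exhibits a convex subcategory of type $\tilde{E}_{6}$ in the universal cover of a suitable zero-relation quotient of $A''$, contradicting that $A''$ is representation-finite. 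Hence $n=1$. A biserial argument on $A'$ --- after adjusting the presentation so that $\sigma^{2}=\rho^{2}=\tau^{2}=0$ simultaneously --- then produces a cyclic word in a suitable zero-relation quotient $A'/J$ with $J$ one-dimensional, contradicting minimal representation-infiniteness.

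For part (ii), along the way one first establishes the analog of Lemma 7 part (iii), namely $\dim A(a,b)=\dim A(b,z)=1$, using uniseriality of these bimodules together with the vanishing relations in type 3. Now let $q=\beta'_{n'}\beta'_{n'-1}\cdots\beta'_{1}$ be a second path with $\overline{q}\sim t$, let $i$ be the smallest index where $\beta_{i}\neq\beta'_{i}$, and write $c$ for the common starting point and $d\neq d'$ for the respective endpoints. Both $d$ and $d'$ are either equal to $z$ or divide $t$, so part (i) and the analog of (iii) apply. A cancellation argument using $\dim A(a,d')=1$ then forces $A(d,d')=0=A(d',d)$: any non-zero morphism $d\to d'$ composed with the common prefix would produce a second path $a\to d'$ distinct from the prefix of $q$, contradicting one-dimensionality. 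Considering the full subcategory on $\{a,d,d',z\}$ and dividing by an appropriate one-dimensional ideal, the universal cover of the resulting zero-relation algebra contains an extended Dynkin quiver of type $\tilde{D}_{n}$, yielding the final contradiction.

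The main obstacle I expect is the thinness step: the loop $\tau$ at $b$ does not interact directly with $S$, so the contradiction cannot be read off from $s^{2}=r^{2}=rts=0$ alone. One must combine the restriction to $A''$ (to bound $n$) with a special biserial quotient argument on $A'$ to rule out $\tau\neq 0$ altogether, and keeping track of which elements of $S$ are hit by which paths through $b$ while choosing the right presentations is the delicate part.
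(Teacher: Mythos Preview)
Your proposal has two genuine gaps, both coming from pushing the analogy with Lemma 7 too far.

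For part (i), the claim that the $\tilde{E}_6$ covering argument rules out $n\geq 2$ does not transfer. In Lemma 7 the relation $\overline{\rho\epsilon}=0$ held for free because $\epsilon$ was induced from $\overline{\beta\alpha_2}$ and $\overline{\rho\beta}\in S$. Here the arrow $b\to z$ in $A''$ is $\gamma_1$ itself, and $\overline{\rho\gamma_1}\neq 0$ since $rt\neq 0$; what one has is a commutativity-type relation $\overline{\rho\gamma_1}\in\overline{\gamma_1}\cdot\mathrm{rad}\,A(b,b)$. Passing to a zero-relation quotient that still detects $\tau^n$ therefore costs an extra power of $\tau$, and the paper only obtains $\tau^3=0$ from the $\tilde{E}_6$ step (quotienting $A''$ by $\gamma_1\tau^2$ and $\rho\gamma_1$ so that $\tau^3$ survives). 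The case $n=2$ is \emph{not} handled by your biserial plan, which explicitly assumes $\tau^2=0$. The paper instead disposes of $n\in\{1,2\}$ in one line: with $i=n$ one has $\overline{\gamma_1\alpha_1\sigma}\sim\overline{\gamma_1\tau^{i}\alpha_1}\sim\overline{\rho\gamma_1\alpha_1}$, i.e.\ $ts\sim rt$, a contradiction. No biserial argument is needed.

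For part (ii), the asserted ``analogue of Lemma 7 (iii)'', namely $\dim A(a,d')=1$, is false. Writing $t=g\cdot f$ with $f\in A(a,d')$, the relation $ts\neq 0$ gives $fs\neq 0$, and $f,fs$ are linearly independent (else $fs=\lambda f$ forces $0=fs^2=\lambda^2 f$); hence $\dim A(a,d')=2$. Your cancellation argument for $A(d,d')=0$ therefore collapses. The paper uses this two-dimensionality positively: for a hypothetical arrow $\xi\colon b\to b'$ one gets $\overline{\xi\zeta}\sim\overline{\zeta'\sigma}$ (the generator of the one-dimensional $\mathrm{rad}\,A(a,b')$), and expressing $\overline{q_2\xi}\in A(b,z)$ in the basis $\overline{p_2},\overline{\rho p_2}$ again forces $ts\sim rt$. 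The contradiction is then read off from the full subcategory on $\{a,b,b'\}$ alone, whose cover contains a $\tilde{D}_5$. The recurring mechanism you are missing is that in type 3 the loops $\sigma,\rho$ make any parallel routing through a $t$-divisor collapse $ts$ and $rt$ together.
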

\begin{proof}
 As usual we look at the full subcategory $A'$ supported by $a,b,z$ and its quiver $Q'$ and we use the notations from figure 4. There is no arrow $\alpha_{2}$ because $b$ does not divide $s$ 
and dually there is no arrow $\gamma_{2}$. Furthermore $\beta$ does not belong to $Q'$ since $b$ divides $t$.
Thus $\sigma$, $\rho$, $\alpha_{1}$ and $\gamma_{1}$ exist. Suppose that there is a loop $\tau$ in addition. If $\tau^{3}$ is not a zero-path also $\gamma_{1}\tau^{3}$ is none and we have 
$\overline{\rho\gamma_{1}}=\overline{\gamma_{1}( x_{2}\tau^{2}+x_{3} \tau^{3}+ \dots )}$. In the fullsubcategory  $A''$ supported by $b,z$ and its quiver $Q''$ we introduce the relations
 $\gamma_{1}\tau^{2}$ and $\rho\gamma_{1}$. Then $\tau^{3}$ is not annihilated and we find in the universal cover of the resulting zero-relation algebra a quiver of type $\tilde{E}_{6}$.
Thus  $\tau^{3}$ is a zero-path. We obtain in $A'$ the contradiction
 $\overline{\gamma_{1}\alpha_{1}\sigma} \sim \overline{\gamma_{1}\tau^{i}\alpha_{1}}\sim \overline{\rho\gamma_{1}\alpha_{1}}$ where $i=2$ if $\tau^{2}$ is not a zero path and $i=1$ in the other case. The proof of part i)
 is complete.

Let $q=\beta'_{n}\beta'_{n-1}\ldots \beta'_{1}$ be another path with $\overline{q}\sim t$. Let $i$ be the smallest index with $\beta_{i}\neq \beta'_{i}$ and write $p=p_{2}\beta_{i}p_{1}$
and $q=q_{2}\beta'_{i}p_{1}$. Let $c$ be the starting point of $\beta_{i}$ and $\beta'_{i}$ and let $b,b'$ be the two different end points. We consider the full subcategory $A'$ 
supported by $a,b,b',z$ and its quiver $Q'$ and we claim that it contains arrows $\zeta:a \rightarrow b$ and $\zeta':a \rightarrow b'$. This is clear if $p_{1}$ has length $0$. 
Thus assume $c\neq a$.
If the morphism $\overline{\beta_{i}p_{1}}$ does not induce an arrow in $Q'$ then there is a path $\xi:b \rightarrow b'$ in $Q$ with 
$\overline{\xi\beta'_{i}p_{1}} \sim \overline{\beta_{i}p_{1}}$ because $A(b,b')$ has dimension one at most as a uniserial bimodule over $k$. This is also true for $A(c,b')$ and so we get the contradiction
$\overline{\beta_{i}} \sim \overline{\xi\beta'_{i}}$. Thus $Q'$ contains the arrows $\sigma$, $\zeta$ and $\zeta'$. 

We claim that $A(b,b')=0$. If not there is an arrow   $\xi:b \rightarrow b'$ in $Q'$ that can be prolongated to a path $q$ with non-zero $\overline{q} \in S$. Then $\xi\zeta$ is not a zero-path 
and we find $\overline{\xi\zeta} \sim \overline{\zeta'\sigma}$ because $\zeta'$ is an arrow. From  $\overline{q_{2}\xi} = \overline{x\rho p_{2} + yp_{2}}$ and $\overline{q_{2}\xi\zeta\sigma}=0$ we obtain the 
contradiction $\overline{\beta\sigma} \sim \overline{q_{2}\zeta'\sigma} \sim \overline{q_{2}\xi\zeta} \sim \overline{\rho p_{2}\zeta} \sim \overline{\rho\beta}$. Symmetrically we get $A(b',b)=0$. 

Finally the full subcategory$A''$ supported by $a,b,b'$ is a zero-relation algebra having a quiver of type $\tilde{D}_{5}$ in its universal cover.
\end{proof}

\subsection{Suspicious algebras of type 3}
\begin{proposition}
 The suspicious algebras of type 3 are exactly the algebras $E(p,q,r)$.
 
\end{proposition}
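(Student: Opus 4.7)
The plan is to establish both implications, following the pattern of Proposition 4. The forward direction, that each $E(p,q,r)$ is non-distributive and minimal representation-infinite of type 3, should be routine: one checks that the subalgebra $eAe$ at $e = e_{a}+e_{z}$ is precisely the three-generator algebra of Proposition 2(ii)(c) (the relations $\overline{\alpha_{1}\alpha_{q}}=\overline{\gamma_{1}\gamma_{p}}=\overline{\gamma_{1}\beta_{r}\ldots\beta_{1}\alpha_{q}}=0$ yield $\alpha^{2}=\gamma^{2}=\gamma\beta\alpha=0$ in $eAe$), then exhibits two linearly independent elements in $\mathrm{rad}^{1}A(a,z)/\mathrm{rad}^{2}A(a,z)$. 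Minimality is verified by adjusting the presentation to reduce each proper quotient to finitely many zero-relation algebras and applying the finiteness criterion \cite[theorem 27]{BSURVEY}, exactly as for $C(p)$ and $D(p,q)$ in the proof of Proposition 4.

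For the converse, let $A$ be a suspicious algebra of type 3 with fixed generators $s,r,t$. First I would use Lemma 6 to partition the vertices distinct from $a,z$ into three classes according to which of $s,r,t$ they divide. The uniqueness statements (Lemma 7(iv) and Lemma 8(ii) together with the dual of Lemma 7) then yield a unique path $\alpha_{q}\ldots\alpha_{1}$ from $a$ to $a$ realizing $s$, a unique cycle $\gamma_{p}\ldots\gamma_{1}$ at $z$ realizing $r$, and a unique path $\beta_{r}\ldots\beta_{1}$ from $a$ to $z$ realizing $t$. By parts (ii), (iii) of Lemma 7 and part (i) of Lemma 8, the interior points of these three paths are thin, pairwise distinct, and exhaust the non-critical vertices. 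Thus $Q$ contains precisely the vertex set of the $E(p,q,r)$ picture.

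The main step, and the principal obstacle, is to rule out any additional arrow beyond the $p+q+r$ arrows appearing in the three paths above. Such an extra arrow would either lie inside one of the three components, join interior vertices of two different components, or join an interior vertex to $a$ or $z$ in a way not pictured. For each configuration I would form the full subcategory $A'$ supported on the endpoints of the hypothetical arrow together with $a$ and $z$, and use Lemmas 6--8 to constrain both the arrows and the relations of $A'$. In every case, an extra arrow forces either a second non-zero path realizing $s$, $r$, or $t$ (contradicting the uniqueness in Lemmas 7(iv) and 8(ii)), or a proper quotient of $A'$ given by zero relations whose Galois covering contains a convex subcategory of extended Dynkin type $\tilde{A}_{n}$, $\tilde{D}_{n}$ or $\tilde{E}_{6}$, contradicting minimality; this is the method used throughout Sections 3 and 4 and the configurations mimic those already analysed in Lemmas 5--8.

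Finally, once the quiver is shown to equal the $E(p,q,r)$ quiver, the defining relations are forced. The zero-relations $\overline{\alpha_{1}\alpha_{q}}=0$ and $\overline{\gamma_{1}\gamma_{p}}=0$ come from $s^{2}=0=r^{2}$ via Lemma 7(ii) and its dual (equivalently from $\alpha^{2}=\gamma^{2}=0$ in $eAe$). The long zero-relation $\overline{\gamma_{1}\beta_{r}\ldots\beta_{1}\alpha_{q}}=0$ corresponds to $r\cdot t\cdot s = 0$, itself immediate from $ts\in S$ and $NS=0$, i.e.\ from $\gamma\beta\alpha=0$ in the type 3 description of $eAe$. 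Since $A$ is generated by its arrows and all relations between paths follow from these three by the uniqueness statements above, the identification $A\cong E(p,q,r)$ is complete.
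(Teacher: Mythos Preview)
Your outline is largely sound and parallels the paper's structure, but two points deserve comment.

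For the forward direction you propose to mimic the treatment of $C(p)$ and $D(p,q)$ via the finiteness criterion. The paper takes a quicker route: $E(p,q,r)$ is special biserial, with the single primitive cyclic word $q^{-1}p$ where $p=\beta_{r}\cdots\beta_{1}\alpha_{q}\cdots\alpha_{1}$ and $q=\gamma_{p}\cdots\gamma_{1}\beta_{r}\cdots\beta_{1}$; any proper quotient destroys this word, leaving a special biserial algebra with no band and hence finite type. Your approach would presumably work but is heavier, and checking the quotients case by case is less pleasant here than for $C(p)$ or $D(p,q)$.

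For the converse, your partition of the vertices via Lemma~6 and the uniqueness of the three paths via Lemmas~7, 8 and duality match the paper exactly. However, for the exclusion of extra arrows you propose contradictions of two sorts: a duplicate path for $s$, $r$ or $t$, or else a covering argument producing an extended Dynkin subquiver. The paper uses \emph{neither} of these mechanisms at this stage. Instead it argues directly by prolongation and dimension: for arrows out of $a$ or $z$ one-dimensionality of the relevant morphism spaces (from Lemmas~7(iii), 8(i)) forces the arrow to be the expected one; for an arrow between thin points one prolongs to a long path and obtains contradictions such as a long path passing twice through a thin vertex, or the relation $ts\sim rt$ (impossible since $ts$ and $rt$ are linearly independent in $S$), or a long path traversing $z$ before $a$ (impossible since $A(z,a)=0$). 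No further passage to coverings is needed once Lemmas~5--8 are in hand. Your method might be made to work, but the paper's direct argument is both shorter and more transparent; in particular the independence of $ts$ and $rt$ is the workhorse you are missing.
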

\begin{proof}
 For an algebra $A$ in the fifth family define $p=\beta_{r}\ldots \beta_{1}\alpha_{q}\ldots \alpha_{1}$ and $q=\gamma_{p}\ldots \gamma_{1}\beta_{r}\ldots \beta_{1}$. Then $A$ is a special
 biserial algebra with $q^{-1}p$ as the only  primitive cyclic word up to inversion and cyclic permutation. Any proper quotient is still special biserial but without any cyclic word and 
so it is representation-finite.

Reversely, let $A$ be an algebra of type 3 with quiver $Q$ and let 
  $\alpha= \alpha_{q}\ldots \alpha_{1}$, $\beta= \beta_{r}\ldots \beta_{1}$, $\gamma=\gamma_{p}\ldots \gamma_{1}$ be the three uniquely determined paths giving $s,t,r$. Since all 
interior points of $\alpha$  are thin  by lemma 7 the interior points are pairwise different. The same holds  for $\beta$ by lemma 8 and for $\gamma$ by the dual of lemma 7. Furthermore 
the union of the interior points is disjoint by lemma 6 and $Q_{0}$ consists in these interior points and $a$ and $z$.
We 
show that any arrow in $Q$ occurs already in one of the three paths. 
So let $\phi:x \rightarrow y$ be an  arrow.
First take $x=a$.  For $y=a$ resp. $y=z$ we get $q=1$ and $\phi=\alpha_{1}$ resp. $r=1$ and $\phi= \beta_{1}$. If $y$ is thin, there is always a non zero-path from $a$ to $y$ and 
we always have $dim \,A(a,y)=1$. Thus $\phi$ is $\alpha_{1}$ or $\beta_{1}$. Next we look at $x=z$ and a thin point $y$. Then we have $dim \,A(z,y)=0$ if $y$ divides $s$ or $t$ 
and $dim \,A(z,y)=1$
if $y$ divides $r$. Thus only $\phi=\gamma_{1}$ is possible. Thus there is no additional arrow starting in a thick point. 
By duality we can assume now that $x$ and $y$ are thin.

We consider always a prolongation $q\phi p$ of $\phi$ such that $\overline{q\phi p} \in S$. First assume that $x$ divides $s$ and also $y$. 
Let $x$ be the endpoint of $\alpha_{i}$ and $y$ the endpoint of $\alpha_{j}$. For $i>j$ we can assume that $p=\alpha_{i}\alpha_{i-1}\ldots \alpha_{1}$ because of $dim \,A(a,x)=1$ 
and then the non-zero path $\phi p$ runs 
twice through $y$ contradicting the fact that $y$ is thin. For $i < j$ we have $\overline{\phi} \sim \overline{\alpha_{i}\ldots \alpha_{j+1}}$ whence there is an arrow only for $i=j+1$.
Next suppose $y$ divides $t$. Then there is a non-zero path of length $\geq 2$ from $x$ to $y$ in $Q'$ and because of $dim\, A(x,y)=1$ there can be no arrow $\phi$.
Finally assume that $y$ divides $r$. Then we get $\overline{\phi p}\sim r't$ and $\overline{q\phi} \sim ts'$ for some non-zero morphisms $r' \in A(z,y)$ and $s' \in A(a,x)$.
The contradiction $ts \sim rt$ follows. 

Next assume that $x$ is the ending point of $\beta_{i}$ for some $i$. If $y$ divides $s$ or if it is the ending point of $\beta_{j}$ with $j<i$ then there is a long path $q\phi p$ 
running twice through 
the thin point $x$ which is a contradiction. For any other $y$ we have a non-zero-path from $x$ to $y$ and so there is an arrow only if $\beta_{i+1}$ ends in $y$. Up to duality the only 
remaining case is when $x$ edivides $r$ and $y$ divides $s$. This would give a long path running first through $z$ and then through $a$ which is excluded by $A(z,a)=0$.

We
 have determined the quiver of $A$. We know already that the two zero-relations $\alpha_{1}\alpha_{m}$ and $\gamma_{1}\gamma_{p}$ hold in $A$. If $\gamma_{1}\beta_{r}\ldots
\beta_{1}\alpha_{m}$ is not a zero-path it can be prolongated to a long path  contradicting $rts=0$.
\end{proof}

\section{Two consequences of theorem 1}

\subsection{Accessible modules for non-distributive algebras}

Ringel defined in \cite{Ringel2}  the notion of an accessible module of finite length: To start with all modules of length $1$ are accessible and a module of length $n\geq 2$ is accessible 
if it is indecomposable 
and if it has an accessible submodule or quotient of length $n-1$. It is known since a long time \cite{Extensions,Degenerations} that  any indecomposable is accessible provided that the field is algebraically closed and $A$ is representation-finite 
 or tame concealed.
 Ringel has shown in \cite{Ringel2} the next result which follows also from theorem 1..

\begin{theorem}
 Let $A$ be a finite-dimensional algebra over an algebraically closed field. If there is an indecomposable of length $n$ there is also an accessible of length $n$.
\end{theorem}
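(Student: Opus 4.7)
The plan is to reduce Theorem~3 to cases where accessibility is already known. The results cited immediately before the statement handle representation-finite algebras and tame concealed algebras, so I may assume that $A$ is representation-infinite and not tame concealed. I would proceed by induction on $\dim_k A$, the base case $\dim A = 1$ being trivial since only simples occur.

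For the inductive step, fix $n$ such that $A$ has an indecomposable of length $n$. If some such indecomposable $M$ has a non-trivial annihilator $J$, then $M$ is an indecomposable $A/J$-module of length $n$ while $\dim (A/J) < \dim A$, so the induction hypothesis gives an accessible $A/J$-module of length $n$; since the submodule and quotient lattices are preserved by pullback along $A \twoheadrightarrow A/J$, this module is also accessible over $A$. Thus I may assume every length-$n$ indecomposable of $A$ is faithful. I then choose a minimal representation-infinite quotient $B = A/I$. By the dichotomy supplied by the paper, either $B$ is distributive---in which case the structure results sketched in the introduction reduce $B$ essentially to a tame concealed algebra and the cited accessibility result for such algebras applies---or $B$ is non-distributive, so that Theorem~1 identifies $B$, up to glueing at a node, with one of the five explicit families $A(p,q)$, $B(p,q)$, $C(p)$, $D(p,q)$, $E(p,q,r)$. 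For the four special biserial families $A(p,q)$, $C(p)$, $D(p,q)$, $E(p,q,r)$ the indecomposables are the usual string and band modules, and deleting one letter at either end of a string word produces an indecomposable sub or quotient of length one less; accessibility of every string module follows by induction on length, and band modules are handled via a string filtration. The tame canonical family $B(p,q)$ is treated analogously via the classical description of its indecomposables in tubes and preprojective/preinjective components.

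The main obstacle is bridging the gap between ``accessibles in $\mathrm{mod}\,B$'' and ``an accessible of length $n$ in $\mathrm{mod}\,A$'' under the standing assumption that length $n$ is realised over $A$ only by faithful modules. Here I would use that $B$, being representation-infinite and minimal such, has indecomposables of every sufficiently large length (the no-gap property, which is immediate from the string/band description for the algebras of Theorem~1 and is known for tame concealed algebras). For the finitely many small values of $n$ below that no-gap threshold one argues directly via length-one extensions starting from the simples of $B$. Since accessible $B$-modules are automatically accessible $A$-modules, the induction closes and Theorem~3 is deduced from Theorem~1.
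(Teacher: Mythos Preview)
Your proposal contains two genuine errors.

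First, the families $C(p)$ and $D(p,q)$ are \emph{not} special biserial. At the thick point $b$ in $C(p)$ the arrows entering are $\alpha$ and $\rho_{p}$, the arrows leaving are $\beta$ and $\rho_{1}$, and the only relation is $\rho_{1}\rho_{p}=0$. Since $\overline{\beta\alpha}$ and $\overline{\beta\rho\alpha}$ are linearly independent (they span the two-dimensional socle), both compositions $\beta\alpha$ and $\rho_{1}\alpha$ are non-zero; hence the arrow $\alpha$ has two outgoing continuations, violating the special biserial condition. The same phenomenon occurs in $D(p,q)$. Your string/band argument therefore does not apply to these two families. The paper treats $C(p)$ and $D(p,q)$ differently: each has an obvious Galois cover with group ${\bf Z}$ containing a tame concealed convex subcategory of type $\tilde{D}_{n}$, and push-downs of its indecomposables give accessibles in every length.

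Second, your handling of the distributive case is incorrect. A distributive minimal representation-infinite algebra does \emph{not} in general reduce to a tame concealed algebra; as Section~7 makes explicit, such algebras can be wild (the paper even exhibits a two-point example whose universal cover contains a hypercritical quiver algebra). The paper calls this ``the difficult case'' and invokes the main result of \cite{Gaps}, which is a substantial theorem in its own right and not something derivable from the structure results sketched in the introduction.

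Finally, your overall reduction is more circuitous than necessary. The paper simply observes that one may assume $A$ is basic and minimal representation-infinite and then shows that accessibles exist in \emph{all} lengths; there is no need to track a particular length $n$ through an induction on $\dim A$ or to worry about whether faithful indecomposables of length $n$ survive in the quotient.
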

\begin{proof}
 We can assume that $A$ is minimal representation-infinite and basic and we have to show that accessible modules exist in all dimensions. 
The case when $A$ is distributive is  the difficult one and this case is treated in \cite{Bind} without mentioning the new terminus 'accessible'.

For a  non-distributive algebra  Ringel has given in \cite{Ringel2} a nice direct argument. Of course  one can now alternatively inspect  the list 
in theorem 1. It suffices to look at the 'unglued' algebras.
The first two families consist of tame concealed algebras and then all indecomposables are accessible. The same is true by \cite{Rinmin} for the last family containing only special biserial
 algebras.
In the remaining two cases there is  an obvious Galois-cover  with fundamental group ${\bf Z}$ that contains a tame-concealed algebra $B$ of type $\tilde{D}_{n}$ as a convex subcategory. The
 push-downs of the indecomposable $B$-modules provide accessibles  in each dimension.
\end{proof}

\subsection{The proof of theorem 2}

Fix a natural number $d$. We want to find a finite list $L$ of ${\bf Z}$-algebras $A$ such that for any algebraically closed field $k$ the 
extended algebras $A\otimes k$ are a representative system of isomorphism classes
of basic minimal representation-infinite algebras of dimension $d$. We consider first the non-distributive algebras.

The relations imposed on any of the quivers $Q$ occurring in theorem 1 and also their glued versions  make sense already in ${\bf Z}Q$ and the quotient algebra $A$ 
is always a free ${\bf Z}$-module. Define $L_{1}$ as the set of 
${\bf Z}$-algebras obtained that way which are free of rank $d$. By scalar extension one obtains for all fields a list of non-distributive minimal
 representation-infinite algebras.

To treat the distributive algebras  we need some concepts and highly non-trivial results all described in \cite{BSURVEY}. We choose any algebraically closed field $k$. There is only a finite list $L_{2}'$ of 
equivalence classes of ray categories $P$ such
 that the linearization $kP$ is minimal 
representation-infinite of  dimension $d$.
By the finiteness-criterion  this property is independent of the chosen field. Furthermore 
any basic distributive minimal representation-infinite algebra is isomorphic to the linearization of a ray category.
Finally $kP$ and $kP'$ are isomorphic if and only if $P$ and $P'$ are equivalent categories. We take $L_{2}$ as the finite set of algebras ${ \bf Z}P$ with $P$ in $L_{2}'$ and define $L$ as the union 
of $L_{1}$ and $L_{2}$.

\section{Tame concealed and  critical simply connected algebras}

\subsection{Three notions of minimality}

For an algebra $A$ of infinite representation type we can ask whether all proper quotients $A/I$ are representation-finite resp. only quotients $A/\langle e_{x} \rangle$ for an arbitrary point $x$ in $Q$ resp.
  only  quotients $A/\langle e_{x} \rangle$ for  a source or a sink $x$ in $Q$. In this way we obtain the set $\mathcal{M}$ of ( isomorphism classes of ) minimal representation-infinite algebras 
resp. the set $\mathcal{A}$ of 
representation-infinite algebras such that almost all  indecomposables ( up to isomorphism ) are sincere resp. the set $\mathcal{B}$ of 
representation-infinite algebras such that almost all indecomposables are extremal or - equivalently -  such that all proper convex subcategories are representation-finite. 

We call algebras in  $\mathcal{B}$  critical. 
Sometimes e.g. in \cite{Listebild,Tame,Skowronskisimson} algebras in 
$ \mathcal{A} $ are already called minimal representation-infinite.

The  inclusions
$\mathcal{M} \subseteq \mathcal{A} \subseteq \mathcal{B}$  are proper but  restricted to algebras having a simply connected component all three sets coincide. 
First we note the following:

\begin{lemma}
 Suppose  a critical algebra $A$ has  a preprojective component $Z$. Then $Z$ contains all indecomposable projectives but no injective. Almost all
 indecomposables in $Z$ are sincere and so
$A$ is tilted from a path-algebra $kK$. If $K$ is Euclidean then $A$ is tame concealed.
\end{lemma}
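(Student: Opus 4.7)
The plan is to verify the four assertions in order, relying throughout on the standard fact that a preprojective component $Z$ is closed under predecessors: for any $M\in Z$ and any indecomposable $N$, a nonzero map $N\to M$ forces $N\in Z$.

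First, for the containment of projectives, I would argue by contradiction. If $P(x)\notin Z$, closure under predecessors applied to $\mathrm{Hom}(P(x),M)=M(x)$ yields $M(x)=0$ for every $M\in Z$, so every module in $Z$ is supported in $Q_{0}\setminus\{x\}$. Combining this with the connectedness of $A$ and of the supports of indecomposables, one extracts a proper convex subcategory $A'\subsetneq A$ that carries the infinite family $Z$, violating criticality. For the absence of injectives, if an injective belonged to $Z$, then since $Z$ already contains all projectives, $Z$ would be a component of the Auslander--Reiten quiver containing both a projective and an injective, hence finite by standard mesh arguments. Auslander's theorem then forces $A$ to be representation-finite, contradicting criticality again.

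For the sincerity statement, a non-sincere indecomposable in $Z$ omits some $x\in Q_{0}$ from its support and is therefore an indecomposable over a proper convex subcategory of $A$. By criticality each such subcategory is representation-finite, so each $x$ contributes only finitely many non-sincere indecomposables; since $Q_{0}$ is finite, the non-sincere elements of $Z$ form a finite set. To conclude that $A$ is tilted from a path algebra, I would then invoke the classical theory of complete slices in preprojective components, which, under the hypotheses just established, produces a slice $\Sigma\subset Z$ such that $T=\bigoplus_{M\in\Sigma}M$ is a tilting $A$-module whose endomorphism ring $kK:=\mathrm{End}_{A}(T)^{\mathrm{op}}$ is the path algebra of some quiver $K$, and $A\cong\mathrm{End}_{kK}(T)^{\mathrm{op}}$.

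Finally, when $K$ is Euclidean, the tilting equivalence carries the preprojective component $Z$ of $A$-$\mathrm{mod}$ to the unique preprojective component of $kK$-$\mathrm{mod}$ containing $T$; hence $T$ is a preprojective $kK$-module, which is precisely the definition that $A$ is tame concealed. The main obstacle I expect is the convex-subcategory step in the projectives claim: the set $Q_{0}\setminus\{x\}$ need not itself be convex in $Q$, so care is required to extract a genuinely proper convex subcategory supporting the infinite family $Z$. This technicality, using the connectedness of $A$ and of supports of indecomposables, will reappear in the sincerity step.
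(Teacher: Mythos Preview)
Your argument for the absence of injectives rests on the claim that a preprojective component containing both a projective and an injective is finite ``by standard mesh arguments.'' This is not a standard fact and does not hold: the presence of an injective $I(x)\in Z$ terminates only the $\tau^{-}$-orbit through $I(x)$, while the remaining orbits can continue indefinitely, so nothing forces $Z$ to be finite. The paper argues via criticality instead: an injective in $Z$ would again produce a proper convex subcategory of infinite representation type (all but finitely many $M\in Z$ are non-predecessors of $I(x)$ in the directed component, hence satisfy $\mathrm{Hom}(M,I(x))=0$ and thus $M(x)=0$).

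Your final step is also circular. Asserting that the tilting equivalence carries $Z$ to ``the preprojective component of $kK$-mod containing $T$'' presupposes that $T$ lies in the preprojective component, which is exactly the conclusion you need. The paper instead invokes the dichotomy from \cite{Tame}: for Euclidean $K$, a tilting module that is neither preprojective nor preinjective makes the tilted algebra either representation-finite or endowed with a proper representation-infinite convex subcategory, and both alternatives contradict $A\in\mathcal{B}$.

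The convexity obstacle you flag in the projectives step is genuine, and it equally undermines your sincerity argument (a module missing the vertex $x$ need not live over a convex subcategory when $x$ is neither source nor sink). The paper circumvents the first by working the other way round: set $B=\{x:Ae_{x}\in Z\}$, which \emph{is} convex; all of $Z$ then consists of $B$-modules, a short argument excludes $Z$ finite, and criticality forces $B=A$. For sincerity the paper does not argue directly but cites \cite{quadratic} for the fact that almost all modules in such a component are extremal, whence sincere.
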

\begin{proof}If $Z$ contains an indecomposable injective $De_{x}A$ it contains such a module for a sink $x$. Only the finitely many predecessors in $Z$   of this module can be extremal and so $A$ is not critical. Thus $Z$ contains no injective and so it is infinite with almost all modules extremal whence sincere by \cite{quadratic}.  Thus $Z$ contains a section $K$ and $A$ is tilted from $kK$ by a tilting module $T$ by \cite{Tame}. For tame $K$ only a preprojective ( or preinjective ) tilting module leads to a critical algebra $A$ as explained in section 4.2 (8)  in Ringels classical book  \cite{Tame}.

\end{proof}

\subsection{Tame concealed algebras are minimal representation-infinite}

I observed this already in \cite{liste}  but   it went unnoticed. Here is the proof.
\begin{proposition}  Any tame concealed algebra $A$ is minimal representation-infinite. 

\end{proposition}
\begin{proof} 
Let $I$ be a non-zero ideal in $A$ and take a non-zero  element $a$ in the intersection of $I$ and the radical.
Suppose $a$ lies in $e_{y}Ae_{x}$. The multiplication with $a$ from the right induces a minimal projective resolution
$$Ae_{y} \rightarrow Ae_{x} \rightarrow C \rightarrow 0$$ with an indecomposable  module $C$. This induces by general facts used in the existence proof for almost split sequences 
( see \cite[section 1.3]{Gabaus} \cite{Vossieck} ) after 
dualizing an exact
 sequence of functors 

$$ 0 \rightarrow Hom(C,\,) \rightarrow Hom(Ae_{x},\,) \rightarrow Hom(Ae_{y}, \, ) \rightarrow DHom( \, ,DTrC) \rightarrow 0.$$ 

Here we plug in a sincere indecomposable $U$ which we identify with the corresponding representation. Then we obtain an exact sequence 

$$ 0 \rightarrow Hom(C,U) \rightarrow U(x) \rightarrow U(y) \rightarrow DHom( U,DTrC) \rightarrow 0,$$ 
where the middle arrow is just the multiplication map $U(a):U(x) \rightarrow U(y)$.

 Thus we see that 
$U(a)=0$ impies $ Hom(C,U)\simeq  U(x)\neq 0$ and $0\neq  U(y)\simeq  Hom(U,DTrC)$ and so  
 $U$ and $C$ are regular  in the same tube $T$. Let $h$ be the number of regular simples in $T$. Their dimension vectors add up to a vector  where all components are different from zero. Since $A$ is directed we have $dim \,C(x)=1$ and so the regular length of $C$ is less than $2h$. This implies
$dim\,Hom(C,U) \leq 2$ in the uniserial category $T$. Thus we have $dim\,U(x) \leq 2 $ which shows that the regular length of $U$ is less than $3h$. 

We have shown that $a$ annihilates only finitely many sincere indecomposables ( up to isomorphism ). But by the easy implication of theorem 2 in \cite{Listebild} almost all indecomposables are sincere.

\end{proof}

\subsection{The relations between tame concealed and critical simply connected algebras}

In the criterion of \cite{Criterion} to decide whether an algebra $A$ is representation-finite I consider a  directed category $\tilde{A}$ as a Galois-covering of $A$ such that each finite subcategory $C$ has a finite convex hull $\hat{C}$ with a simply connected component in its Auslander-Reiten quiver. Such a component is given by a graded tree $(T,g)$ as in \cite[section 6]{Coverings}.  Here $\tilde{A}$ is locally representation-finite iff $A $ is representation-finite by \cite{Galoiscovering}.

 If there is a finite subcategory of $\tilde{A}$ of infinite representation type one gets just by throwing away successively extremal points     a critical convex subcategory $\hat{C}$ which has then exactly one simply connected component in its Auslander-Reiten quiver. 
  Typical examples for such a critical category are the algebras $\tilde{D}(p,q,r)$ of figure 5.
 Here  the  left hand side is a commutative diagram that disappears for $p=0$  and
 the analogous statement  holds for $r=0$ on the other side. 

\setlength{\unitlength}{0.8cm}
\begin{picture}(13,7)\put(6.5,1){figure 5}
\put(3,6){$a$}
\put(2,5){$c_{1}$}
\put(2,4){$c_{2}$}

\put(2,2){$c_{p}$}
\put(3,1){$b$}
\put(4,3.5){$z_{1}$}

\put(5.5,3.5){$z_{2}$}
\put(10,3.5){$z_{q}$}
\put(11,6){$d$}
\put(12,5){$f_{1}$}
\put(12,4){$f_{2}$}
\put(12,2){$f_{r}$}
\put(11,1){$e$}
\put(3.2,5.8){\line(2,-5){0.6}}
\put(10.2,3.3){\line(2,-5){0.6}}
\put(3.2,1.3){\line(2,5){0.6}}
\put(10.2,3.8){\line(2,5){0.6}}
\put(4.6,3.6){\line(1,0){0.6}}
\put(2.9,5.85){\vector(-1,-1){0.5}}
\put(11.9,1.85){\vector(-1,-1){0.5}}
\put(2.2,1.8){\vector(1,-1){0.5}}
\put(11.2,5.8){\vector(1,-1){0.5}}
\put(2.1,4.8){\vector(0,-1){0.3}}
\put(12.1,4.8){\vector(0,-1){0.3}}

\multiput(6.5,3.6)(0.2,0){14}{\circle*{0.03}}
\multiput(2.1,3.6)(0,-0.2){6}{\circle*{0.03}}
\multiput(12.1,3.6)(0,-0.2){6}{\circle*{0.03}}
\end{picture}

For the critical subcategories I proved in \cite{liste}:
\begin{theorem} A critical subcategory $\hat{C}$ contained in $\tilde{A}$ has  a simply connected component given by a Euclidean tree  $T$. For  $T=\tilde{D}_ {n}$ only the algebras $\tilde{D}(p,q,r)$ and $B(n-3,1)$ are possible.

\end{theorem}
Thus the critical algebras are all tame concealed by  lemma 9.
Happel and Vossieck have determined  in \cite{Listebild}  the tame concealed algebras by looking at the possible preprojective tilting modules up to $TrD$- translation  and by calculating their endomorphism algebras. The `frames' of these are depicted in the  HV-list:

\begin{theorem} An algebra $A$ is tame concealed iff it occurs in the HV-list.
\end{theorem}

They proved also in\cite{Listebild}:
\begin{theorem} Let $A$ be a basic connected algebra  of finite dimension. Then $A$ is tame concealed or a generalized Kronecker algebra iff $A$ has an infinite preprojective component and $A/AeA$ is representation-finite for each idempotent $e\neq 0$.
\end{theorem}
 Thus the tame concealed  algebras are exactly  the basic minimal representation-infinite algebras with a preprojective component.

In the literature there are many false or misleading statements concerning the last three theorems and the finiteness criterion. Here are some comments on this. 

\begin{itemize}
\item   Theorem 5 was presented by Happel at a conference in september 1982 at Luminy. The case $\tilde{A}_{n}$ is easy, $\tilde{D}_{n}$ affords some combinatorial considerations and the remaining three cases a computer and some drawing. 
Theorem 6 was not mentioned in Happels talk.
\item At the same conference theorem 4 was presented in my talk as a part of a general finiteness criterion. For the exceptional cases $\tilde{E}_{n}$ I had only the gradings of the representation-infinite algebras  as unreadable computer-lists.  So my condition was that all convex subcategories with at most $9$ points have to be representaton-finite. Up to the action of the Galois-group only finitely many convex subcategories have to be considered.

 My lists contained also some algebras which are not  critical. In \cite{liste} I removed these and then my lists are in accordance with the HV-list. The obvious lemma 9 was overlooked by me at that time and it is still not mentioned  in \cite{Br} that contains several incorrect statements.

\item Theorem  6 is published in    the article \cite{Listebild} submitted in november 1982 .  To prove the difficult direction a result of Ovsienko on  quadratic forms \cite{Ovsienko} and tilting theory are used.

 The  proof of theorem 6 is much more elegant and shorter  than my bare-handed proof of theorem 4 using only the inductive method from \cite{Coverings}.   Both proofs are completely different contradicting  \cite[chapter XIV]{Skowronskisimson}.

However, for the algebras occurring in the finiteness criterion my result is better than theorem 6 because convex subcategories are easy to detect. Also the case $\tilde{D}_{n}$ is much easier to analyze with my inductive method and this could have been used in \cite[chapter XIV]{Skowronskisimson} where on 14  pages only the cases $n\leq 6$ are treated.

\item In July 1982 at a meeting in Bielefeld I had mentioned in a private conversation with Ringel theorem 4 that I just found before.   The result was not known to him but he conjectured that there is a connection to Ovsienkos result that was unknown to me.  

\item  What Ringel writes in \cite{RinVos} about the importance of the HV-list is not true. Neither the proof of BT2 nor  the article on multiplicative bases depend on the HV-list. 
 \item It is aggravating that the finiteness criterion  \cite{Criterion} and its further developments in \cite{BSURVEY} are not mentioned in the recent literature. I will say  more about this in a  forthcoming survey on representation-finite selfinjective algebras, coverings and so on.

\end{itemize}

\section{On the classification}
\subsection{Glueings}

In section 7 we freely use ray categories ( \cite{BGRS} ) and their properties as surveyed in \cite{BSURVEY}. Thus let $A$ be a basic distributive minimal 
representation-infinite algebra. 
 By the important theorem 2 of \cite{Bind} $A$ is isomorphic to the linearization $kP$ of its associated ray category $P$,
the universal cover 
$\pi:\tilde{P} \rightarrow P$ has a free fundamental group, $\tilde{P}$ is interval-finite  and any finite subset of $\tilde{P}$ lies in a finite full convex
 subcategory $C$  of $\tilde{A}=k\tilde{P}$ 
which has a simply connected preprojective component in its module category. Here $C,\tilde{A}$ and $A$ are standard and so they admit a
 presentation induced by zero-paths and contours.

Now we  take an embedding $i:C \rightarrow \tilde{A}$ of a finite full convex  subcatgory $C$  and the composition  $p:C \rightarrow A$  with 
 $k\pi: \tilde{A} \rightarrow A$. We denote by $i$ and $p$ also the induced morphisms at the level of the quivers $Q_{C},Q_{\tilde{A}}$ and $Q_{A}$ and their
 path-categories.
We obtain an equivalence relation $R_{p}$ on the point set $(Q_{C})_{0}$ of $Q_{C}$ having the non-empty fibres of $p$ as the equivalence classes.

More general for any equivalence relation $R$ on $(Q_{C})_{0}$ we have the quotient quiver $Q_{C}/R$ with the equivalence classes as points and 
the natural surjective quiver-morphism
$q:Q_{C} \rightarrow Q_{C}/R$ extending to a functor beteen the path-categories again denoted by $q$.
We define the glued algebra $C_{R}$ as the quotient of $k(Q_{C}/R)$ by the ideal generated by the path $qv$ where $v$ is a  zero-path in $C$, by the differences $qu-qw$
 where $(u,w)$ is a contour in $C$ and by the paths $v$ in $Q_{C}/R$ that have no lifting in $Q$. Observe here that the zero-paths and the contours in $C$ are
 just those of
$\tilde{A}$ that lie in the full convex subcategory $C$. 

\begin{proposition}
 Keeping these notations and assumptions let $M$ be an $\tilde{A}$-module with support $C$ and push-down $N$.
Assume  that the powers $N^{n}$ have infinitely many pairwise non-isomorphic indecomposables as quotients. Then $A$ is isomorphic to the glued algebra $C/_{R_{p}}$.
\end{proposition}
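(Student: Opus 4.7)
The plan is to construct a natural surjective algebra map $\bar p\colon C_{R_p}\to A$ induced by $p$ and then, using the hypothesis on the powers of $N$, to show it is injective.

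First I would check that $p$ factors through $C_{R_p}$ to give a well-defined map $\bar p$. Since $p$ sends $R_p$-equivalent vertices to the same target, it descends to $Q_C/R_p$; the zero-paths and contour differences from $C$ defining $C_{R_p}$ are inherited from $\tilde A$ and so vanish (respectively, are equated) in $A$; and a path in $Q_C/R_p$ without lift in $Q_C$ maps to a composition through a glued vertex in $A$, which vanishes because the standard presentation $A=kP$ combined with interval-finiteness of $\tilde P$ ensures that no path in $\tilde P$ joins two distinct preimages of a common vertex of $P$.

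Second I would use the hypothesis to prove that $\bar p$ is surjective, via the stronger statement that $N$ is a faithful $A$-module. If $f\in\mathrm{Ann}_A(N)$ were nonzero, every indecomposable quotient of each $N^n$ would be an $A/\langle f\rangle$-module; by minimality of $A$ that quotient is representation-finite, giving only finitely many non-isomorphic indecomposables, in contradiction with the hypothesis. Specializing to $f=e_y$ for a vertex $y\notin p((Q_C)_0)$ (whence $N_y=0$) gives surjectivity of $p$ on vertices, and to $f=\alpha$ for an arrow $\alpha$ of $Q_A$ without lift in $Q_C$ (whence $\alpha$ acts as zero on the push-down by the push-down formula) gives surjectivity on arrows. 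Hence $\bar p$ is a surjection of algebras.

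Third I would establish injectivity of $\bar p$. The natural $C_{R_p}$-module structure on $N$ obtained by collapsing $M$ along $R_p$ coincides with the pullback of the $A$-module $N$ along $\bar p$, so $\mathrm{Ann}_{C_{R_p}}(N)=\ker\bar p$. A nonzero $\rho\in\ker\bar p$ would provide a relation in $A$ whose lift to $\tilde A$ escapes $C$; refining the minimality argument from the previous step inside $C_{R_p}/\langle\rho\rangle$ and using standardness of $A$ to realize every relation as arising from a zero-path or contour of $\tilde A$, I would argue that the hypothesis forces all relations needed to present $A$ to lie inside $C$, so no such $\rho$ exists. The hard part is precisely this last step: matching every relation of $A$ with one recorded in the defining ideal of $C_{R_p}$, which requires combining the interval-finiteness of $\tilde P$, the description of the ideal of $A=kP$ by zero-paths and contour differences, and the full strength of the hypothesis that $N^n$ already has infinitely many non-isomorphic indecomposable quotients.
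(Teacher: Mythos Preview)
Your overall strategy---establish faithfulness of $N$, identify $Q_A$ with $Q_C/R_p$, and then match the defining relations---is the same as the paper's. But your execution of step~1 contains a genuine error, and the order of your first two steps must be reversed.

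The claim in step~1 that ``interval-finiteness of $\tilde P$ ensures that no path in $\tilde P$ joins two distinct preimages of a common vertex of $P$'' is false. Any loop $\rho$ at a point $x$ of $P$ lifts in the universal cover to an arrow between two \emph{distinct} preimages of $x$; more generally, any non-identity element of $P(x,x)$ lifts to a non-zero morphism between distinct fibre points. So you cannot prove, prior to knowing that $N$ is faithful, that a path in $Q_C/R_p$ without lift in $Q_C$ maps to zero in $A$: such a path may well have a non-zero lifting in $\tilde P$ that simply leaves $C$. The paper handles this by proving faithfulness of $N$ \emph{first} and then using the push-down formula: for any path $v$ in $Q_A$, the map $N(v)$ is the direct sum of the $M(v')$ over liftings $v'$ lying in $C$; if there is no such lifting then $N(v)=0$, and faithfulness forces $\bar v=0$ in $A$. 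Thus well-definedness of your map $\bar p$ and its surjectivity on vertices and arrows are consequences of faithfulness, not preliminaries to it.

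For step~3 you have the right idea but you make it sound harder than it is. Since $A\simeq kP$ is standard, its ideal is generated by zero-paths and contour-differences. A zero-path $v$ in $A$ either has no lifting in $C$---then it is already among the generators of the ideal defining $C_{R_p}$---or it has a lifting $v'$ in $C$; since $\pi$ is a covering, $v'$ is a zero-path in $\tilde A$, and since $C$ is full and convex it is a zero-path in $C$. For a contour $(u,w)$ in $A$, the path $u$ is non-zero, so $N(u)\neq 0$ gives a lifting $u'$ in $C$; the lifting $w'$ of $w$ with the same starting point has the same endpoint (again a covering property), and by convexity $w'$ lies in $C$, so $(u',w')$ is a contour in $C$. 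Hence every generator of the ideal of $A$ is already in the image of the ideal defining $C_{R_p}$, and $\bar p$ is injective. No further ``refinement of minimality inside $C_{R_p}/\langle\rho\rangle$'' is needed.
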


\begin{proof}
$N$ is faithful because $A$ is minimal representation-infinite and the powers of $N$ have infinitely many pairwise non-isomorphic indecomposable quotients. 
Let $v$ be a path in $Q_{A}$ from $x$ to $y$ and let $v'$ be a lifting in $Q_{\tilde{A}}$ from $x'$ to $y'$. Since $C$ is convex $v'$ is a path in $C$ if and only if 
$x'$ and $y'$ both belong to $C$. By the definition of the push-down $N(v)N(x) \rightarrow N(y)$ acts 'diagonally' through the various liftings $M(v'):M(x') \rightarrow M(y')$.

Thus $N(v)=0$ unless there is a lifting $v'$ in $C$. In particular paths of length $0$ or $1$ have a lifting in $C$ and so the morphism from the quiver of C 
to the quiver of $A$ is surjective and it identifies $Q_{A}$ with $Q_{C}/R$.

If $v$ has no lifting in $C$ it annihilates $N$ and so it is a zero-path. If $v$ has a lifting  $v'$ in C. Then $v$ is a zero-path in $A$ if and only if $v'$ is a zero-path 
in $\tilde{A}$ if and only if $v'$ is a zero-path in $C$.

Similarly for a contour $(u,w)$  in $A$ the path $u$ is not a zero-path and so $N(u)\neq 0$ implies that there is a lifting $u'$ in $C$ with starting point $x'$.
 Then the lifting $w'$ of $w$ starting in $x'$ also lies in $C$ and $(u',w')$ is a 
contour in $C$ mapping to the given contour.

\end{proof}

\subsection{Towards the classification}

A line  $L$ of length $e$ in $\tilde{P}$ is a convex subcategory living on a linear subquiver  $x_1 \rightarrow x_{2}  \ldots  x_{e-1}\leftarrow x_{e}$  without any 
relation. The line is called critical  if $\pi(x_{1})=\pi(x_{e})$ are both sources or both  sinks in $L$ and if  $\pi(x_{2})\neq \pi(x_{e-1})$ holds. 

\begin{proposition}We keep all the assumptions and notations and we assume that  $d:= dim\,kP < \infty$. Then we have:

\begin{enumerate}
 \item Any line $L$ in $\tilde{P}$ of length $2d+1$ contains a critical line as a subline.
\item For any  critical line $L$ of length $e$ the push-down $N$ of the indecomposable $\tilde{P}$-module $M$ with support $L$ has infinitely many pairwise 
non-isomorphic quotients of dimension $e-1$.
 
 \end{enumerate}
\end{proposition}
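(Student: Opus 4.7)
I would treat the two parts separately.

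For \emph{part (i)} the plan is a pigeonhole argument. Since $d = \dim kP < \infty$ forces $|P_0| \le d$ (each object of $P$ contributes at least its identity to $kP$), among the $2d+1$ vertices $x_1,\ldots,x_{2d+1}$ of $L$ some object $p \in P$ has at least three preimages, at positions $i_1 < i_2 < i_3$. Writing $\epsilon_i \in \{\pm 1\}$ for the direction of the $i$-th arrow of $L$ (with $\epsilon_i = +1$ iff $x_i \to x_{i+1}$), a subline $[x_a, x_b]$ is critical precisely when $\pi(x_a) = \pi(x_b)$, $\epsilon_a = -\epsilon_{b-1}$, and $\pi(x_{a+1}) \neq \pi(x_{b-1})$. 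A case analysis on the four possible orientation patterns at the three preimages shows that, except in a degenerate ``monotone'' configuration where all three are transit in $L$ with aligned orientation, at least one of the three sublines $[x_{i_1}, x_{i_2}]$, $[x_{i_2}, x_{i_3}]$, $[x_{i_1}, x_{i_3}]$ has matching-type endpoints. In the monotone case the neighbours $x_{i_j \pm 1}$ are forced to produce additional $\pi$-coincidences, and iterating the argument on the enlarged pool of coincidences yields the desired critical subline. In all cases, shrinking to a minimal matching-type $\pi$-coincident subline then enforces the neighbour condition $\pi(x_{a+1}) \neq \pi(x_{b-1})$, since otherwise $[x_{a+1}, x_{b-1}]$ would be a strictly shorter candidate of the same form.

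For \emph{part (ii)} the plan is to exhibit a one-parameter family of one-dimensional subrepresentations of $N$ and distinguish the corresponding quotients. Assume by duality that both endpoints $x_1$ and $x_e$ are sinks in $L$, and set $q_0 := \pi(x_1) = \pi(x_e)$. The push-down $N$ has basis vectors $v_1$ from $x_1$ and $v_e$ from $x_e$, both lying in $N(q_0)$. Convexity of $L$ in $\tilde{P}$ forces the only arrow of $L$ at each of $x_1, x_e$ to be the incoming one, so every arrow of $\tilde{P}$ leaving $x_1$ or $x_e$ lies outside $L$; by the push-down recipe recalled in the proof of proposition 9, every arrow of $P$ with source $q_0$ then annihilates both $v_1$ and $v_e$ in $N$. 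Hence every one-dimensional subspace $W_\lambda$ of the plane $\mathrm{span}(v_1, v_e) \subseteq N(q_0)$ is a simple subrepresentation of $N$, producing a ${\bf P}^{1}(k)$-family of $(e-1)$-dimensional quotients $N_\lambda := N/W_\lambda$. The critical-line condition $\pi(x_2) \neq \pi(x_{e-1})$ ensures that the two arrows $\alpha := \pi(x_2 \to x_1)$ and $\beta := \pi(x_{e-1} \to x_e)$ are distinct arrows of $P$ with different sources but common target $q_0$; inside $N_\lambda$ the images of the basis vectors at $x_2$ and $x_{e-1}$ under $\alpha$ and $\beta$ land in the one-dimensional quotient at $q_0$ with a ratio depending linearly on $\lambda$. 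This ratio is an isomorphism invariant because the action of the arrows away from $q_0$ pins down the remaining basis vectors up to scalars, so only finitely many $\lambda$ can give isomorphic quotients, and the infinitude of $k$ yields infinitely many pairwise non-isomorphic quotients of dimension $e-1$.

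The main obstacles I foresee are the combinatorial bookkeeping in part (i), particularly the monotone-orientation configuration where naive pigeonhole on $(\pi(x_i), \epsilon_i)$ alone does not produce a critical subline, and the rigidity claim in part (ii) that the scalar $\lambda$ cannot be absorbed by an automorphism of $N$. The latter requires analysing $\mathrm{End}(N)$ and exploiting the indecomposability of $M$ together with the freeness of the fundamental group action on $\tilde{P}$, which precludes a non-trivial deck transformation preserving the critical line $L$.
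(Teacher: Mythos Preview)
Your pigeonhole on $|P_0|\le d$ is too coarse, and the monotone case is a genuine gap, not just bookkeeping. If the three $\pi$-coincident vertices $x_{i_1},x_{i_2},x_{i_3}$ are all transit points with aligned arrows, nothing forces the neighbours $x_{i_j\pm 1}$ to $\pi$-coincide: the arrows $x_{i_j}\to x_{i_j+1}$ may project to \emph{different} arrows of $P$ out of $p$, so your ``iteration on the enlarged pool'' has nothing to iterate on. Your shrinking step is also wrong: if $[x_a,x_b]$ has both endpoints sinks and $\pi(x_{a+1})=\pi(x_{b-1})$, the shorter interval $[x_{a+1},x_{b-1}]$ has the right $\pi$-coincidence but in general \emph{not} matching-type endpoints (the inner arrows at $x_{a+1},x_{b-1}$ are unconstrained), so minimality among matching-type sublines does not force the neighbour condition.

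The paper avoids all of this by pigeonholing on the \emph{sinks} of $L$ rather than on all vertices. Each of the $2d+1$ vertices of $L$ has a nonzero path inside $L$ to some sink $s_i$, giving $2d+1\le \sum_i \dim k\tilde P(\,\cdot\,,s_i)$. By the Galois-covering identity $\dim k\tilde P(\,\cdot\,,s_i)=\dim kP(\,\cdot\,,\pi(s_i))$ and $\sum_{p\in P_0}\dim kP(\,\cdot\,,p)=d$, if every point of $P$ were hit by at most two sinks the right side would be $\le 2d$. Hence three sinks share the same $\pi$-image, and any two of them bound a subline whose endpoints are automatically both sinks --- the orientation analysis disappears. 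The neighbour condition then falls out from distributivity: if all three candidate sublines failed it, the two arrows into the middle sink would project to a double arrow $q\rightrightarrows p$ in $P$, which a ray category of a distributive algebra cannot have; and the unique-lifting property at that sink then forces its two distinct neighbours in $L$ to coincide, a contradiction.

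\textbf{Part (ii).} The paper gives no argument here and simply cites lemma~3.2 of \cite{Criterion}. Your outline is in the right spirit, but the step ``the ratio is an isomorphism invariant because the arrows away from $q_0$ pin down the remaining basis vectors up to scalars'' is exactly where the work lies: you must argue that the scalar freedom along the line does not absorb $\lambda$, which amounts to computing $\mathrm{Aut}(N)$ and using that the deck group acts freely on $\tilde P$ so that no nontrivial translate of $L$ meets $L$. That is essentially what the cited lemma does.
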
 

\begin{proof} Up to duality we can assume that $x_{1}$ is a sink in $L$ which we  write down  thereby marking all sinks $s_{1},s_{2},\ldots s_{r}$. We obtain

$$  x_{1}=s_{1}\leftarrow \ldots \rightarrow s_{2}\rightarrow \ldots \rightarrow s_{i}\leftarrow \ldots \rightarrow s_{r}\leftarrow \ldots x_{2d+1},$$
where $s_{r}=x_{2d+1}$ is possible. Thus we get $2d+1 \leq \sum_{i=1}^{r} \,dim\,Dk\tilde{P}(\,,s_{i})$ and so three sinks are mapped onto the same point under $\pi$. We can assume that these points are
 $s_{1},s_{i}$ and $s_{r}$. There is a critical subline with two of these as extremal points. The first assertion is proven and the second is shown 
in lemma 3.2 of \cite{Criterion}.

\end{proof}

Now we can show that any minimal representation-infinite algebra $A$ is isomorphic to a glued algebra $C_{R}$ where $C$ is a critical line or a critical algebra and $R$
 an equivalence relation. For the case of triangular algebras and for the  notion of minimality defining the algebras in $\mathcal{A}$ a slightgly weaker statement
 was obtained in 
\cite{Ca}.     .

We distinguish three cases depending on the structure of the universal cover $\tilde{P}$ which is not locally representation-finite by \cite{Galoiscovering}.\vspace{0.5cm}

Case I: Each finite full subcategory is representation-finite. 

Then there are indecomposable $\tilde{A}$-modules with arbitrarily large support $B$ which is always a convex full subcategory. Therefore $B$ belongs to the list 
LSS and we find a critical line $C$ with part i) of proposition 8. Part ii) and proposition 7 show that $A$ is glued from $C$ by an appropriate $R$.\vspace{0.5cm}

Case II: $\tilde{A}$ contains a critical algebra $C$ of type $\tilde{D}_{n}$, but none of type $\tilde{E}_{n}$.

Then we take for $M'$ a progenerator of $C$. The powers of $M'$ have infinitely many pairwise non-isomorphic indecomposable modules and the same holds 
for the extension
$M$ of $M'$ by zero and its push-down $N$ by basic properties of the push-down functor. Thus $A$ is  a glued algebra by proposition 7. The same argument applies in 
the last case.\vspace{0.5cm}

Case III: There is a critical convex full subcategory $C$ of type $\tilde{E}_{n}$.\vspace{0.5cm}

Unfortunately for a given $C$ there are many equivalence relations $R$ such that $C_{R}$ is not minimal representation-finite even if we restrict to those $R$ such that
the induced morphism $q:Q_{C} \rightarrow Q_{C}/R$ is injective on arrows with a common source or sink as $p:Q_{C} \rightarrow Q_{A}$ is. 

For example let $C$ be the 
quiver-algebra of the quiver shown in figure 6. There are 53 isomorphism classes of proper glueings but only 9 of them are minimal representation-infinite.
 The smallest of these algebras
has two points $x,y$ and two arrows $\alpha: x\rightarrow y$, $\beta:x \rightarrow x$ subject to the relations $\beta^{4}=\alpha\beta^{3} =0$. This algebra is minimal 
represenation-infinite but wild since its universal cover contains a hyper-critical quiver algebra.

\setlength{\unitlength}{0.8cm}
\begin{picture}(13,7)\put(6.5,1){figure 6}
\put(3,6){$a$}

\put(3,1){$b$}
\put(4,3.5){$z_{1}$}

\put(10,3.5){$z_{2}$}
\put(11,6){$d$}

\put(11,1){$e$}
\put(3.2,5.8){\vector(1,-2){0.8}}
\put(10.2,4){\vector(1,2){0.8}}
\put(4,3.3){\vector(-1,-2){0.8}}
\put(10.2,3.3){\vector(1,-2){0.8}}
\put(4.6,3.6){\vector(1,0){4.5}}

\end{picture}

A glance at the HV-list shows:

\begin{proposition}
 Any basic distributive minimal representation-infinite algebra $A$ is defined by zero-relations and by at most three commutativity relations. $A$ is a zero-relation 
algebra if it is obtained by glueing a zero-relation algebra.
\end{proposition}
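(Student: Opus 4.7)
The plan is to exploit the classification in section 7.2 together with proposition 7: any basic distributive minimal representation-infinite algebra $A$ is isomorphic to a glued algebra $C_R$, where $C$ is either a critical line in $\tilde{P}$ or a critical simply connected algebra occurring as a finite convex subcategory of $\tilde{A}$, and $R$ is an equivalence relation on the vertex set of $Q_C$. The whole question then reduces to tracking which relations the glueing procedure can produce.

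First I would inspect the definition of $C_R$ from section 7.1. Its defining ideal is generated by three kinds of elements: (a) images $qv$ of zero-paths $v$ in $C$, (b) differences $qu-qw$ coming from contours $(u,w)$ in $C$, and (c) paths in $Q_C/R$ admitting no lifting in $Q_C$. Relations of types (a) and (c) are manifestly zero-relations, so every commutativity relation in any presentation of $C_R$ must come from a contour of $C$, and their number is bounded above by the number of independent contours in $C$.

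Next I would handle the two possibilities for $C$. A critical line carries no zero-paths and no contours, so its glueing is presented purely by zero-relations of type (c). For a critical simply connected algebra $C$ I would appeal to the list CSC: in type $\tilde{A}_n$ the algebras are hereditary; in type $\tilde{D}_n$ one has either the canonical algebras $B(p,1)$, carrying a single mesh-type relation, or the algebras $\tilde{D}(p,q,r)$ of figure 5, which carry at most two commutative squares, one at each end; and in type $\tilde{E}_n$ the explicit Happel--Vossieck presentations show that at most three commutativity relations occur. Hence $C$, and therefore $C_R$, is presented by zero-relations together with at most three commutativity relations.

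The second assertion then follows at once from the same dissection: if $C$ is itself presented by zero-relations only, no contours occur in $C$, so $C_R$ inherits only relations of types (a) and (c), all of them zero-relations. The main obstacle in this plan is the finite but unavoidable case check required to bound the number of commutativity relations by three for the $\tilde{E}_n$ algebras; this amounts to a direct inspection of the published CSC list and carries no conceptual difficulty.
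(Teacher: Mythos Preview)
Your proposal is correct and follows exactly the line the paper intends: the paper's entire proof is the single phrase ``A glance at the list CSC shows'' placed immediately before the proposition, relying on the glueing description just established (proposition 7 and the trichotomy of cases I--III) together with an inspection of the Happel--Vossieck list. Your write-up makes explicit precisely what that glance consists of---namely that in the definition of $C_{R}$ only the generators of type (b) can contribute commutativity relations, that their number is bounded by the number of contours in $C$, and that the CSC list never exhibits more than three---so there is nothing to add or correct.
\medskip

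One small remark: be careful with the clause about $B(p,1)$. These canonical algebras carry a three-term relation rather than a two-term commutativity, so you should either verify that they actually lie in $\mathcal{C}=\mathcal{T}\cap\mathcal{S}$ (and hence can occur as a convex $C$ inside $\tilde{P}$) or simply drop them from the case analysis; the statement of the proposition and the paper's argument only need the genuine CSC algebras, whose relations are contours in the sense used in section~7.1.
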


At the end we discuss shortly the different cases. Of course we can always choose a critical line or a critical algebra $C$ of minimal cardinality. 

Case I is solved completely by Ringel in \cite{Rinmin}. Only special biserial algebras occur and so all glueings are tame and Ringel also studies the module categories.

Case II is more complicated and there is in general no chance to describe the module categories 
as the 
above example shows. Nevertheless this case seems to allow a classification  into finitely 
many families.  I started this project by finding necessary conditions on $R$ ensuring that the algebra is minimal representation-infinite, but I finally 
flinched from producing another list.

Case III means
to classify the minimal representation-infinite ray categories with at most $9$ points and this is  a finite problem.
But already the case of $3$ points treated by Fischbacher in his diploma thesis published in \cite{Fisch3} leads to very many algebras
 and this shows that a general classification makes no sense.

\end{document}